\numberwithin{equation}{section}
\newcommand{\R}{\mathbb{R}}
\newcommand{\Z}{\mathbb{Z}}
\newcommand{\C}{\mathcal{C}}
\newcommand{\D}{\mathcal{D}}
\newcommand{\LWM}{\mathrm{LWM}}
\newcommand{\WM}{\mathrm{WM}}
\newcommand{\M}{\mathrm{M}}
\newcommand{\restrict}{\upharpoonright}
\newcommand{\diam}{\mathrm{diam}}
\newcommand{\type}{\mathrm{type}}
\newcommand{\maj}{\prec}
\newcommand{\floor}[1]{\left\lfloor #1\right\rfloor}
\newcommand{\ceil}[1]{\left\lceil #1\right\rceil}
\renewcommand{\emptyset}{\varnothing}
\newcommand{\st}{\mid}
\newtheorem{theorem}{Theorem}[section]
\newtheorem{lemma}[theorem]{Lemma}
\newtheorem{observation}[theorem]{Observation}
\newtheorem{corollary}[theorem]{Corollary}
\theoremstyle{definition}
\newtheorem{question}[theorem]{Question}
\newtheorem{problem}[theorem]{Problem}
\newtheorem{definition}[theorem]{Definition}
\newtheorem{example}[theorem]{Example}
\author[V. ~Bardenova]{Viktoriya Bardenova}
\address[Viktoriya Bardenova]{ 
Virginia Commonwealth University,
Department of Mathematics and Applied Mathematics,
1015 Floyd Avenue, PO Box 842014, Richmond, Virginia 23284, United States
} 
\email[V. ~Bardenova]{bardenovav@vcu.edu} 
\author[N. ~Bushaw]{Neal Bushaw}
\address[Neal Bushaw]{ 
Virginia Commonwealth University,
Department of Mathematics and Applied Mathematics,
1015 Floyd Avenue, PO Box 842014, Richmond, Virginia 23284, United States
} 
\email[N. ~Bushaw]{nobushaw@vcu.edu} 
\urladdr{https://thenealon.github.io/}
\author[B. ~Cody]{Brent Cody}
\address[Brent Cody]{ 
Virginia Commonwealth University,
Department of Mathematics and Applied Mathematics,
1015 Floyd Avenue, PO Box 842014, Richmond, Virginia 23284, United States
} 
\email[B. ~Cody]{bmcody@vcu.edu}
\urladdr{https://brentcody.github.io/}
\author[P. ~Fay]{Paul Fay}
\address[Paul Fay]{ 
Virginia Commonwealth University,
Department of Mathematics and Applied Mathematics,
1015 Floyd Avenue, PO Box 842014, Richmond, Virginia 23284, United States
} 
\email[P. ~Fay]{fayph@vcu.edu}
\author[M. ~Tennant]{Maya Tennant}
\address[Maya Tennant]{ 
Virginia Commonwealth University,
Department of Mathematics and Applied Mathematics,
1015 Floyd Avenue, PO Box 842014, Richmond, Virginia 23284, United States
} 
\email[M. ~Tennant]{tennantm@vcu.edu}
\thanks{All five authors were partially supported by a Seed Award from Virginia Commonwealth University. Special thanks to Sean Cox for partially supporting the first, fourth and fifth authors on National Science Foundation grant DMS-2154141.}
\date{\today}
\pgfplotsset{compat=1.18}
\begin{document}

\title{The Wiener index of vertex colorings}

\begin{abstract}
The Wiener index of a vertex coloring of a graph is defined to be the sum of all pairwise geodesic distances between vertices of the same color. We provide characterizations of vertex colorings of paths and cycles whose Wiener index is as large as possible over various natural collections. Along the way we establish a connection between the majorization order on tuples of integers and the Wiener index of vertex colorings on paths and cycles.
\end{abstract}

\subjclass[2020]{05C12, 05C15, 05C35}

\keywords{}

\maketitle




%


\section{Introduction}\label{section_introduction}



Vertex colorings whose color classes are ``spread out,'' in various senses, have been widely studied throughout graph theory. For example, a \emph{proper coloring} \cite{MR1633290, MR2368647} has color classes that are independent sets, a \emph{domatic coloring} \cite{MR584887} has color classes that are dominating sets, and a \emph{packing coloring} \cite{MR2360659, sh2023} has the property that its $i$th-color class is an $i$-packing. Furthermore, each of these topics has expanded to include variations with extra conditions on the color classes, or conditions on the pairwise relationships between color classes (see, e.g., \cite{MR2387114, MR593902}).  In this article, we study vertex colorings whose color classes are spread out in another sense.


Suppose $G$ is a finite simple connected graph. For vertices $u,v\in V(G)$ the \emph{distance from $u$ to $v$}, denoted by $d_G(u,v)$ is the length of a shortest path from $u$ to $v$ in $G$. For a set of vertices $A\subseteq V(G)$, the \emph{Wiener index of $A$ in $G$}, previously studied in \cite{BCL}, is the quantity
\[W_G(A)=\sum_{\{u,v\}\in \binom{A}{2}}d_G(u,v).\]  
We say that a set of vertices $A$ is a \emph{maximizer of $W$ on $G$} if 
\[W_G(A)=\max\{W_G(A')\st A'\subseteq V(G)\text{ and }|A'|=|A|\}.\]
Let us note that, although the Wiener index of sets of vertices has only been introduced recently, the Wiener index $W(V(G))$ of a graph $G$, an example of what is referred to as a \emph{topological invariant}, has been widely studied \cite{MR1843259,MR1916949,MR1461039,MR3570470} in the context of mathematical chemistry.

For sets of vertices $A$ and $B$, with $|A|=|B|$, when $W_G(A)>W_G(B)$, the pairwise distances between vertices of $A$ are, on average, larger than the pairwise distances between vertices of $B$. Thus, in a sense, maximizers of $W$ on $G$ are sets of vertices that are spread out as much as possible within $G$. Several characterizations of sets of vertices that maximize $W$ on paths and cycles were given in \cite{BCL}. In this article, we extend the notion of the Wiener index from sets of vertices to vertex colorings, and we characterize vertex colorings with maximal Wiener index on paths and cycles.

For a finite graph $G$ and a vertex $v\in V(G)$, we define $d_G(v)$ to be the tuple of distances from $v$ to all other vertices of $G$, arranged in non-decreasing order. If a graph $G$ is \emph{distance degree regular}, meaning that whenever $u,v\in V(G)$ we have $d_G(u)=d_G(v)$, then it follows \cite[Theorem 3.5]{BCL} that the complement of any maximizer of $W$ in $G$ is also a maximizer of $W$ in $G$. Therefore, if $G$ is distance degree regular, then a set of vertices $A$ is a maximizer of $W$ on $G$ if and only if the quantity $W(A)+W(V(G)\setminus A)$ is maximal among the values of $W(A')+W(V(G)\setminus A')$ for all $A'\subseteq V(G)$ with $|A'|=|A|$. Thus, we see that for a distance degree regular graph $G$, the problem of finding a set of vertices of a given cardinality $n_1$ that is a maximizer of $W$, is equivalent to that of finding a $2$-partition $\{V_1,V_2\}$ of $V(G)$ such that $|V_1|=n_1$ and $W(V_1)+W(V_2)$ is maximal among all quantities of the form $W(V_1')+W(V_2')$ where $\{V_1',V_2'\}$ is a $2$-partition of $V(G)$ with $|V_1'|=n_1$.


For a positive integer $k$, a \emph{$k$-coloring} of a graph $G$ is simply a function $f:V(G)\to\{1,\ldots,k\}$.\footnote{Let us note that the standard notion of \emph{proper vertex coloring}, in which adjacent vertices receive distinct colors, does not play a role in this article.} For each $i\in\{1,\ldots,k\}$ we let $V_i=f^{-1}(i)$, so that $\{V_1,\ldots,V_k\}$ is a $k$-partition of $V(G)$. The \emph{type of $f$} is the tuple of cardinalities of color classes of $f$ arranged in non-decreasing order; that is,
\[\type(f)=(|V_{i_1}|,\ldots,|V_{i_k}|)\]
where $|V_{i_1}|\leq\cdots\leq|V_{i_k}|$.
The \emph{Wiener index of $f$} is 
\[W(f)=\sum_{i=1}^kW(V_i).\]

If $f:V(G)\to\{1,\ldots,k\}$ is a function, and $u,v\in V(G)$, we let $S_{u,v}(f):V(G)\to\{1,\ldots,k\}$ denote the function which equals $f$ on $V(G)\setminus \{u,v\}$ and satisfies $S_{u,v}(f)(u)=f(v)$ and $S_{u,v}(f)(v)=f(u)$; that is, the colors of $u$ and $v$ are swapped. \begin{definition} \label{definition_maximizer}
Suppose $k\geq 2$ is an integer and $f:V(G)\to\{1,\ldots,k\}$ is a function. We say that
\begin{enumerate}
\item $f$ is a \emph{$k$-color local weak maximizer of $W$ on $G$} if $f$ is surjective and $W(f)\geq W(S_{u,v}(f))$ for all $\{u,v\}\in E(G)$,
\item $f$ is a \emph{$k$-color weak maximizer of $W$ on $G$} if $f$ is surjective and $W(f)\geq W(f')$ for all surjective functions $f':V(G)\to\{1,\ldots,k\}$ with $\type(f')=\type(f)$ and
\item $f$ is a \emph{$k$-color maximizer of $W$ on $G$} if $f$ is surjective and $W(f)\geq W(f')$ for all surjective functions $f':V(G)\to\{1,\ldots,k\}$.
\end{enumerate}
We let $\LWM_k(G)$, $\WM_k(G)$ and $\M_k(G)$ denote the collections of $k$-color local weak maximizers of $W$ on $G$, $k$-color weak maximizers of $W$ on $G$ and $k$-color maximizers of $W$ on $G$, respectively.
\end{definition}

Clearly, $\M_k(G)\subseteq\WM_k(G)\subseteq\LWM_k(G)$, but the reverse inclusions do not hold in general.

We let $C_n$ denote a cycle on $n$ vertices and $P_n$ a path on $n$ vertices. Several characterizations of sets of vertices in $C_n$ that are maximizers of $W$ were previously established in \cite{BCL}. In Section \ref{section_weak_max_cycles}, we review one such characterization and introduce certain canonical sets of vertices of $C_n$ that are maximizers of $W$, which we refer to as \emph{good} sets. Using the notion of good set, we prove that a surjective function $f:V(C_n)\to\{1,\ldots,k\}$ is a $k$-color weak maximizer of $W$ on $C_n$ if and only if for all $i\in\{1,\ldots,k\}$ the set $f^{-1}(i)$ is a maximizer of $W$ on $C_n$.

In Section \ref{section_local_weak_max_paths}, we prove that a surjective function $f:V(P_n)\to\{1,\ldots,k\}$ is a $k$-color weak maximizer of $W$ on $P_n$ if and only if it is $k$-color \emph{local} weak maximizer of $W$ on $P_n$, and we provide a complete specification of such functions; let us emphasize that the color classes of weak maximizers of $W$ on $P_n$ need not themselves be maximizers of $W$ as sets of vertices. Sets of vertices of $P_n$ that maximize $W$ were previously characterized in \cite{BCL}. 


In Section \ref{section_majorization}, we establish a connection between the Wiener index of $k$-color weak maximizers on paths and cycles and the majorization order $\maj$ (Definition \ref{definition_maj}), which originated in the work of Schur \cite{Schur1923} as well as Hardy, Littlewood and Polya \cite{HLP}, and which has also been applied in graph theory \cite{BCL, MR2364586, MR262112}.  

Using the results of Section \ref{section_local_weak_max_paths} characterizing the $k$-color weak maximizers on paths, we prove that given two $k$-color weak maximizers $f$ and $f'$ on $P_n$, $\type(f)\maj\type(f')$ and $\type(f)\neq \type(f')$ implies $W(f)<W(f')$ (Theorem \ref{theorem_maj_paths}). As a consequence (see Corollary \ref{corollary_largest_and_smallest_paths}), we deduce two things for paths: (1) the $k$-color maximizers of $W$ on $P_n$ are precisely the $k$-color weak maximizers of $W$ which have a color class of the largest possible size $n-(k-1)$ and (2) the $k$-color weak maximizers of $W$ on $P_n$ with the smallest possible Wiener index are precisely those whose color classes differ in size by at most one. In particular, by (1), this shows that the problem of finding $k$-color maximizers of $W$ on $P_n$ reduces to that of finding sets of vertices of $P_n$ that maximize the Wiener index. So, in a sense, at least for paths, $k$-color weak maximizers are more interesting than $k$-color maximizers. However, the situation for cycles is more complicated.

We discuss an example (Example \ref{example_bad}) which shows that the analogous results for cycles do not hold in general. For example, it is not true in general that the $k$-color maximizers of $W$ on $C_n$ are precisely the $k$-color weak maximizers with a color class of the largest possible size $n-(k-1)$. We then isolate the problematic cases and prove a modified version of this result for cycles (see Theorem \ref{theorem_maj_cycles}, Corollary \ref{corollary_W_does_not_go_down} and Figure \ref{figure_3cwm_maj}), which states that if $f$ and $f'$ are two $k$-color weak maximizers of $W$ on $C_n$, $\type(f)\maj\type(f')$ and $\type(f)\neq \type(f')$ then $W(f)\leq W(f')$, and furthermore, our results specify the conditions under which one can have $\type(f)\maj \type(f')$, $\type(f)\neq\type(f')$ and $W(f)=W(f')$ (on cycles). We then characterize which $k$-color weak maximizers of $W$ on $C_n$ are maximizers of $W$ on $C_n$ (see Corollary \ref{corollary_cycles_largest}), and which $k$-color weak maximizers of $W$ on $C_n$ have the smallest possible Wiener index (see Corollary \ref{corollary_cycles_smallest}).

In Section \ref{section_questions}, we state three broad problems as well as several specific questions relevant to the present work.

\section{Weak maximizers on cycles}\label{section_weak_max_cycles}

In this section we will prove that a surjective function $f:V(C_n)\to\{1,\ldots,k\}$ is a $k$-color weak maximizer of $W$ on $C_n$ if and only if all of the color classes of $f$ are weak maximizers of $W$ on $C_n$, as sets of vertices (see Figure \ref{figure_3cwm} for a depiction of the $3$-color weak maximizers of $W$ on $C_7$). Let us recall a characterization of sets of vertices that are maximizers of $W$ on $C_n$ established in \cite{BCL}. We say that a set of vertices $X$ of a finite graph $G$ is \emph{connected} if the induced subgraph $G[X]$ is connected. A partition $\mathcal{D}$ of the vertices of a graph is called \emph{equitable} if for all blocks $D_1,D_2\in\mathcal{D}$ the cardinalities $|D_1|$ and $|D_2|$ differ by at most one.

\begin{figure}
\centering
\begin{tikzpicture}[scale=0.4]
    \definecolor{r}{rgb}{1, 0.4, 0.4}    
    \definecolor{b}{rgb}{0.4, 0.4, 1}    
    \definecolor{y}{rgb}{1, 1, 0}
  
  \tikzset{
    ynode/.style={draw, fill=y, circle, minimum size=2mm, inner sep=0pt},
    rnode/.style={draw, fill=r, circle, minimum size=2mm, inner sep=0pt},
    bnode/.style={draw, fill=b, circle, minimum size=2mm, inner sep=0pt}
  }

  \newcommand{\cycle}[9]{
    \foreach \i in {1,...,7} {
      \coordinate (p\i) at ({#1 + cos((360/7)*(\i-1)+90)}, {#2 + sin((360/7)*(\i-1)+90)});
    }

    \foreach \i [count=\j] in {1,...,7} {
      \ifnum\j=7
        \draw (p\j) -- (p1);
      \else
        \draw (p\j) -- (p\the\numexpr\j+1\relax);
      \fi
    }

    \node[#3node] at (p1) {};
    \node[#4node] at (p2) {};
    \node[#5node] at (p3) {};
    \node[#6node] at (p4) {};
    \node[#7node] at (p5) {};
    \node[#8node] at (p6) {};
    \node[#9node] at (p7) {};

  }
  
  \cycle{0}{0}{b}{y}{y}{r}{y}{y}{y}
  \cycle{3}{0}{b}{r}{y}{y}{r}{y}{y}
  \cycle{6}{0}{b}{y}{r}{y}{y}{r}{y}
  \cycle{9}{0}{b}{r}{y}{r}{y}{r}{y}
  \cycle{12}{0}{b}{r}{y}{y}{r}{r}{y}
  \cycle{15}{0}{b}{r}{r}{y}{r}{r}{y}
  \cycle{18}{0}{b}{r}{y}{r}{r}{y}{r}
  \cycle{21}{0}{b}{r}{r}{r}{y}{r}{r}
  \cycle{24}{0}{b}{r}{y}{y}{b}{y}{y}
  \cycle{27}{0}{b}{y}{r}{y}{b}{y}{y}

  \cycle{0}{-3}{b}{r}{y}{b}{r}{y}{y}
  \cycle{3}{-3}{b}{r}{y}{b}{y}{r}{y}
  \cycle{6}{-3}{b}{r}{r}{y}{b}{r}{y}
  \cycle{9}{-3}{b}{r}{y}{b}{r}{y}{r}
  \cycle{12}{-3}{b}{r}{r}{b}{r}{r}{y}
  \cycle{15}{-3}{b}{r}{r}{b}{r}{y}{r}
  \cycle{18}{-3}{b}{b}{y}{r}{b}{y}{y}
  \cycle{21}{-3}{b}{r}{y}{b}{y}{b}{y}
  \cycle{24}{-3}{b}{b}{r}{y}{b}{r}{y}
  \cycle{27}{-3}{b}{r}{b}{y}{b}{r}{y}

  \cycle{0}{-6}{b}{b}{r}{r}{b}{y}{r}
  \cycle{3}{-6}{b}{r}{b}{r}{b}{r}{y}
  \cycle{6}{-6}{b}{b}{r}{y}{b}{b}{y}
  \cycle{9}{-6}{b}{b}{y}{b}{r}{b}{y}
  \cycle{12}{-6}{b}{b}{r}{b}{b}{r}{y}
  \cycle{15}{-6}{b}{b}{r}{b}{y}{b}{r}
  \cycle{18}{-6}{b}{b}{b}{r}{b}{b}{y}

\end{tikzpicture}
\caption{\small The $3$-color weak maximizers of $W$ on $C_7$ up to automorphism.}
\label{figure_3cwm}
\end{figure}

\begin{definition}\label{definition_balanced}
Suppose $G$ is a finite simple connected graph and $A$ is a set of vertices in $G$.
\begin{enumerate}
\item We say that $A$ is \emph{balanced in $G$} if for every equitable $2$-partition $\mathcal{D}$ of $V(G)$ with connected blocks, the cardinalities of $A$ intersected with the two blocks of $\D$ differ by at most one.
\item We say that $A$ is \emph{weakly balanced in $G$} if and only if for every equitable $2$-partition $\D$ of $V(G)$ with connected blocks, the cardinalities of $A$ intersected with the two blocks of $\D$ differ by at most two, and furthermore, whenever the cardinalities of $A$ intersected with the two blocks of such a $\D$ differ by exactly $2$, the block containing more points of $A$ must be the larger block, that is, $D_1,D_2\in\mathcal{D}$ with $|A\cap D_1|=|A\cap D_2|+2$ implies $|D_1|>|D_2|$.
\end{enumerate}
\end{definition}

\begin{theorem}[{Bushaw, Cody and Leffler \cite{BCL}}]\label{theorem_balanced}
Suppose $A$ is a set of vertices in $C_n$ where  $2\leq|A|=m\leq n$. 
\begin{enumerate}
\item\label{item_thm_bal_balanced} Suppose $n$ is even or $m$ is odd. Then $A$ is a maximizer of $W$ on $C_n$ if and only if $A$ is balanced.
\item\label{item_thm_bal_wbnb} Suppose $n$ is odd and $m$ is even. Then $A$ is a maximizer of $W$ on $C_n$ if and only if $A$ is weakly balanced. Furthermore, in this case, every maximizer of the Wiener index on $C_n$ with cardinality $m$ is not balanced.
\item\label{item_thm_bal_weakly_balanced} $A$ is a maximizer of $W$ if and only if it is weakly balanced.
\end{enumerate}
\end{theorem}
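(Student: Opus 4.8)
The plan is to reduce the whole statement to a single ``cut formula'' for the Wiener index on a cycle. Identify $V(C_n)$ with $\Z_n$, set $w=\lfloor n/2\rfloor$, and for $i\in\Z_n$ put $S_i=\{i,i+1,\dots,i+w-1\}$ and $T_i=\Z_n\setminus S_i$; as $i$ ranges over $\Z_n$, the unordered pairs $\{S_i,T_i\}$ are exactly the equitable $2$-partitions of $V(C_n)$ with connected blocks. Writing $m=|A|$ and $a_i=|A\cap S_i|$, I claim
\[
W_{C_n}(A)=\frac12\sum_{i\in\Z_n}a_i(m-a_i).
\]
To see this, expand the right-hand side as a double sum over $i\in\Z_n$ and over pairs $\{u,v\}\in\binom{A}{2}$ of the indicator that $S_i$ separates $u$ from $v$; exchanging the order of summation, it suffices to check that each pair at distance $\ell=d_{C_n}(u,v)$ is separated by exactly $2\ell$ of the arcs $S_i$. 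This is a short computation: $\{i:u\in S_i\}$ is an arc of length $w$ in $\Z_n$, $\{i:v\in S_i\}$ is its translate by the signed displacement from $u$ to $v$, and two length-$w$ arcs differing by a translation of $s$ have symmetric difference of size $2\min(s\bmod n,\ n-(s\bmod n))=2\ell$, using $\ell\le w=\lfloor n/2\rfloor$.

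Granting the cut formula, the argument becomes arithmetic. For an integer $a$ one has $a(m-a)\le q:=\lfloor m/2\rfloor\lceil m/2\rceil$, and the ``deficit'' $q-a(m-a)$ is a nonnegative integer that equals $(m/2-a)^2$ when $m$ is even and $\frac14((2a-m)^2-1)$ when $m$ is odd, vanishing in either case exactly when $a\in\{\lfloor m/2\rfloor,\lceil m/2\rceil\}$. Moreover each vertex of $C_n$ lies in exactly $w$ of the arcs $S_i$, so $\sum_i a_i=mw$. Since $W_{C_n}(A)=\frac12\bigl(nq-\sum_i(q-a_i(m-a_i))\bigr)$, I split on parity. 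If $n$ is even or $m$ is odd then $W_{C_n}(A)\le\frac n2 q$, with equality exactly when every $a_i\in\{\lfloor m/2\rfloor,\lceil m/2\rceil\}$ --- that is, exactly when $A$ is balanced. If $n$ is odd and $m$ is even then $\sum_i(m/2-a_i)=nm/2-m(n-1)/2=m/2>0$, and as the summands are integers,
\[
\sum_i(q-a_i(m-a_i))=\sum_i(m/2-a_i)^2\ \ge\ \sum_i|m/2-a_i|\ \ge\ \Bigl|\sum_i(m/2-a_i)\Bigr|=\frac m2,
\]
with equality throughout exactly when every $m/2-a_i\in\{0,1\}$, i.e.\ every $a_i\in\{m/2-1,m/2\}$; since $T_i$ is then strictly larger than $S_i$, this is precisely the definition of weakly balanced. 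It also shows that no set is balanced in this case (balancedness forces $\sum_i a_i=nm/2\neq m(n-1)/2$), so in particular no maximizer is balanced.

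It remains to show the bounds are attained. I would use the evenly spaced set $A_0=\{\lfloor jn/m\rfloor:0\le j\le m-1\}$ together with the standard discrepancy fact that every arc of length $w$ in $\Z_n$ meets $A_0$ in $\lfloor wm/n\rfloor$ or $\lceil wm/n\rceil$ points. Taking $w=\lfloor n/2\rfloor$ forces every $a_i$ into $\{\lfloor m/2\rfloor,\lceil m/2\rceil\}$ when $n$ is even or $m$ is odd, so $A_0$ is balanced and attains $W_{C_n}=\frac n2 q$; and into $\{m/2-1,m/2\}$ when $n$ is odd and $m$ is even, so $A_0$ is weakly balanced, and then $\sum_i a_i=m(n-1)/2$ pins the total deficit at $m/2$, so $A_0$ attains $W_{C_n}=\frac12(nq-m/2)$. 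This gives items (1) and (2), including the ``furthermore'' clause. For item (3), observe that when $n$ is even or $m$ is odd the notions balanced and weakly balanced coincide: for $n$ even both blocks of any equitable connected $2$-partition have equal size, so a difference of exactly $2$ is never permitted, while for $m$ odd the quantity $2a_i-m$ is odd, so the two blocks of $A$ can never differ by an even number $\ge 2$. Hence (3) restates (1) in those cases and (2) otherwise.

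The only step that is not pure bookkeeping is the existence of extremal sets, i.e.\ the discrepancy lemma for $A_0$ (equivalently, exhibiting a balanced or weakly balanced set by hand); everything after the cut formula is convexity plus counting, and the cut formula itself is an elementary symmetric-difference argument. Since the theorem is an equivalence, one cannot avoid showing that the combinatorial upper bound is tight, so a construction of this flavor is the real content beyond the formula.
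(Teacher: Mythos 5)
Your proposal is correct, and it is worth noting that the paper does not actually prove Theorem \ref{theorem_balanced} at all --- it is imported as a black box from \cite{BCL}. So what you have written is a self-contained proof of a cited result rather than a variant of an argument in this paper. Your route is clean: the separation identity $W_{C_n}(A)=\tfrac12\sum_i a_i(m-a_i)$ is verified correctly (the count of $2\ell$ separating arcs per pair at distance $\ell$ checks out, including the observation that the ``disjoint arcs'' case is vacuous because $w=\lfloor n/2\rfloor$), and the ensuing convexity/parity bookkeeping, the identity $\sum_i a_i=mw$, the chain $\sum_i(m/2-a_i)^2\ge\sum_i|m/2-a_i|\ge m/2$ with its equality analysis, the translation of the equality conditions into the definitions of balanced and weakly balanced, and the reduction of item (3) to items (1) and (2) are all correct. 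The one ingredient you assert without proof is the discrepancy property of the evenly spaced set $A_0$ (every arc of length $w$ meets it in $\lfloor wm/n\rfloor$ or $\lceil wm/n\rceil$ points); this is a standard fact about maximally even sets, but since you only need \emph{existence} of one extremal set per cardinality, you could avoid it entirely by using the paper's own ``good'' sets --- two antipodal arcs of lengths $\lfloor m/2\rfloor$ and $\lceil m/2\rceil$ as in Observation \ref{lemma_good_sets} --- for which the verification that every $a_i$ lands in the required two-element set is a direct computation (essentially the one carried out in the proof of Lemma \ref{lemma_good_implies_maximier}). With that substitution, or with a proof of the discrepancy lemma, your argument is complete.
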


\begin{definition}\label{definition_good}
For an integer $n\geq 1$, we say that a set of vertices $A$ of a cycle $C_n$ is \emph{almost good} if either $|A|=1$, $|A|=n-1$ or the induced subgraph $C_n[A]$ has exactly two connected components whose vertex sets $A_1$ and $A_2$ satisfy $\left||A_1|-|A_2|\right|\leq 1$. We say that $A$ is \emph{good} if both $A$ and $V(C_n)\setminus A$ are almost good. 
\end{definition}

\begin{observation}\label{lemma_good_sets}
Suppose $m$ and $n$ are integers with $n\geq 4$, $2\leq m\leq n-2$. A set of vertices $A\subseteq V(C_n)$ is good if and only if there is an automorphism $\phi$ of $C_n$ such that 
\[A=\phi\left(\left[0,\floor{\frac{m}{2}}\right]\cup\left[\floor{\frac{n}{2}},\floor{\frac{n}{2}}+\ceil{\frac{m}{2}}-1\right]\right).\]
\end{observation}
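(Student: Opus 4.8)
The plan is to identify $V(C_n)$ with $\Z/n\Z$, under which $\mathrm{Aut}(C_n)$ is the dihedral group generated by the rotation $j\mapsto j+1$ and the reflection $j\mapsto -j$, and to split the statement into two parts: (i) the displayed set, which I will call $A_0$, is good, and (ii) for each $m$ with $2\le m\le n-2$, the good subsets of $V(C_n)$ of cardinality $m$ form a single orbit under the induced action of $\mathrm{Aut}(C_n)$ on subsets. Granting (i) and (ii), the observation is immediate, since goodness is $\mathrm{Aut}(C_n)$-invariant: any $\phi\in\mathrm{Aut}(C_n)$ restricts to graph isomorphisms $C_n[S]\to C_n[\phi(S)]$ and $C_n[V(C_n)\setminus S]\to C_n[V(C_n)\setminus\phi(S)]$, and hence preserves the number and the sizes of the connected components appearing in Definition~\ref{definition_good}, for a set and for its complement simultaneously.

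For (i) I would read off the connected components of $C_n[A_0]$, which are the two arcs appearing in the displayed union, and those of $C_n[V(C_n)\setminus A_0]$, which are the two complementary arcs; compute the four cardinalities; and check the inequalities of Definition~\ref{definition_good}. This is a short computation with a handful of cases according to the parities of $m$ and $n$, in which the hypothesis $2\le m\le n-2$ is used to see that none of the four arcs is empty.

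For (ii), let $A$ be a good set with $|A|=m$. Since $2\le m\le n-2$, the complement $V(C_n)\setminus A$ also has between $2$ and $n-2$ vertices, so neither $A$ nor its complement is a single vertex or the complement of one; hence the definition of \emph{almost good} forces $C_n[A]$ and $C_n[V(C_n)\setminus A]$ each to have exactly two connected components. Because $A\ne V(C_n)\ne V(C_n)\setminus A$, each of these components is an arc of $C_n$, so traveling once around the cycle one meets, in cyclic order, two arcs $I_1,I_2$ that make up $A$, interleaved with two arcs $J_1,J_2$ that make up the complement, and goodness gives $\bigl||I_1|-|I_2|\bigr|\le 1$ and $\bigl||J_1|-|J_2|\bigr|\le 1$. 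Together with $|I_1|+|I_2|=m$ and $|J_1|+|J_2|=n-m$, this forces the multisets $\{|I_1|,|I_2|\}$ and $\{|J_1|,|J_2|\}$ to depend only on $m$ and $n$. Finally I would argue that a configuration of this shape is determined, up to $\mathrm{Aut}(C_n)$, by those two multisets: a rotation moves the initial vertex of one arc to $0$, and a reflection composed with a suitable rotation realizes either admissible ``swap'' of the resulting cyclic word $(|I_1|,|J_1|,|I_2|,|J_2|)$ — interchanging the two arc-entries while fixing the gap-entries, or interchanging the two gap-entries while fixing the arc-entries. Hence all good subsets of cardinality $m$ lie in one $\mathrm{Aut}(C_n)$-orbit, which establishes (ii).

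I expect the crux to be that last normalization step — verifying that the dihedral action on the four-arc ``necklace'' $(|I_1|,|J_1|,|I_2|,|J_2|)$ is precisely rich enough to carry an arbitrary balanced two-arc/two-gap set onto a single representative, in particular that orientation-reversal, after an appropriate rotation, swaps the two arcs without disturbing the two gaps (and symmetrically) — together with the parity bookkeeping in (i). The remaining points, namely that the connected components of $C_n[S]$ are arcs whenever $\varnothing\ne S\subsetneq V(C_n)$ and the cardinality arithmetic, are routine.
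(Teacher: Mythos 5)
The paper offers no proof to compare against: this is stated as an Observation and left unproved, so your argument stands or falls on its own. Your overall architecture is the right one, and part (ii) is sound: for $2\le m\le n-2$ the almost-good conditions force exactly two components for $A$ and for its complement, hence an arc/gap/arc/gap necklace whose arc sizes form the multiset $\{\lfloor m/2\rfloor,\lceil m/2\rceil\}$ and whose gap sizes form $\{\lfloor (n-m)/2\rfloor,\lceil (n-m)/2\rceil\}$; and the dihedral group does act transitively on the resulting size words, since a rotation by half the necklace swaps both pairs, a reflection through the midpoint of an arc swaps the two gap sizes while fixing the arc sizes, and a reflection through the midpoint of a gap does the reverse.

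The genuine problem is in part (i), which you assert is ``a short computation'' without carrying it out: done literally, the computation refutes the displayed formula rather than verifying it. The interval $\left[0,\lfloor m/2\rfloor\right]$ contains $\lfloor m/2\rfloor+1$ vertices, so the displayed set has cardinality $m+1$, not $m$. Worse, its two gaps have sizes $\lfloor n/2\rfloor-\lfloor m/2\rfloor-1$ and $\lceil n/2\rceil-\lceil m/2\rceil$, which differ by $2$ when $n$ is odd and $m$ is even (for instance $n=7$, $m=2$ gives the set $\{0,1,3\}$, whose complement splits into components of sizes $1$ and $3$), so in that case the displayed set is not good and the stated equivalence fails in both directions. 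This is an off-by-one misprint: the first interval should be $\left[0,\lfloor m/2\rfloor-1\right]$, consistent with the analogous formulas $f_*^{-1}(j)=\left[0,\lfloor n_j/2\rfloor-1\right]\cup\left[\lfloor n/2\rfloor,\lfloor n/2\rfloor+\lceil n_j/2\rceil-1\right]$ used later in the proof of Theorem \ref{theorem_maj_cycles}. With that correction the arcs have sizes $\lfloor m/2\rfloor$ and $\lceil m/2\rceil$ and the gaps have sizes $\lfloor n/2\rfloor-\lfloor m/2\rfloor$ and $\lceil n/2\rceil-\lceil m/2\rceil$, each pair differing by at most one in every parity case and all four nonempty because $2\le m\le n-2$, and your step (i) then goes through. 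So you should either record this correction explicitly or accept that, for the statement as printed, the verification you promised in (i) cannot succeed.
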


\begin{lemma}\label{lemma_good_implies_maximier}
For an integer $n\geq 1$, if a set of vertices $A$ of $C_n$ is good then it is a maximizer of $W$.
\end{lemma}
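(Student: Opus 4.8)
The plan is to appeal to the third part of Theorem~\ref{theorem_balanced}, which identifies the maximizers of $W$ on $C_n$ with the weakly balanced sets of vertices; it therefore suffices to prove that every good set $A$ is weakly balanced. The cases $\card{A}\le 1$, $\card{A}\ge n-1$, and $n\le 3$ are immediate, since in these cases either $W_{C_n}(A)=0$ or every subset of $V(C_n)$ of cardinality $\card{A}$ is the image of $A$ under an automorphism of $C_n$, so $A$ is automatically a maximizer. So assume $n\ge 4$ and, writing $m=\card{A}$, that $2\le m\le n-2$. By the definition of good set, $A$ has exactly two connected components $A_1,A_2$ and $V(C_n)\setminus A$ has exactly two connected components $B_1,B_2$, with $\bigl|\card{A_1}-\card{A_2}\bigr|\le 1$ and $\bigl|\card{B_1}-\card{B_2}\bigr|\le 1$; hence $\{\card{A_1},\card{A_2}\}=\{\floor{m/2},\ceil{m/2}\}$ and $\{\card{B_1},\card{B_2}\}=\{\floor{(n-m)/2},\ceil{(n-m)/2}\}$, and around $C_n$ these four arcs occur in the cyclic order $A_1,B_1,A_2,B_2$.

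Let $\{D_1,D_2\}$ be an arbitrary equitable $2$-partition of $V(C_n)$ with connected blocks, so each $D_j$ is an arc with $\card{D_j}\in\{\floor{n/2},\ceil{n/2}\}$. Since $\card{A\cap D_1}+\card{A\cap D_2}=m$, to see that $A$ is weakly balanced it is enough to check that $\card{A\cap D_1}$ and $\card{A\cap D_2}$ differ by at most $2$, and that when they differ by exactly $2$ the block meeting $A$ in more points is the larger of $D_1,D_2$. The crux is the following claim about a single arc: \emph{if $I$ is an arc of $C_n$ with $\card{I}\le\ceil{n/2}$, then $\card{A\cap I}\le\ceil{m/2}$ --- except that when $n$ is odd and $m$ is even one has only $\card{A\cap I}\le\ceil{m/2}+1$, and in that case $\card{A\cap I}=\ceil{m/2}+1$ forces $\card{I}=\ceil{n/2}$.}

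To prove the claim I would distinguish two cases. If $I$ meets at most one of $A_1,A_2$ then $\card{A\cap I}\le\max\{\card{A_1},\card{A_2}\}=\ceil{m/2}$. If $I$ meets both $A_1$ and $A_2$ then, because $\card{I}\le\ceil{n/2}<n$ so that $I$ does not wrap around $C_n$, the cyclic order $A_1,B_1,A_2,B_2$ forces $I$ to contain all of $B_1$ or all of $B_2$; hence $\card{A\cap I}\le\card{I}-\min\{\card{B_1},\card{B_2}\}\le\ceil{n/2}-\floor{(n-m)/2}$. A short check over the four parities of $(n,m)$ shows that $\ceil{n/2}-\floor{(n-m)/2}$ equals $\ceil{m/2}$ when $n$ is even or $m$ is odd, and equals $\ceil{m/2}+1=\tfrac m2+1$ when $n$ is odd and $m$ is even; in the latter case $\card{A\cap I}=\tfrac m2+1$ gives $\card{I}\ge\min\{\card{B_1},\card{B_2}\}+\tfrac m2+1=\floor{(n-m)/2}+\tfrac m2+1=\ceil{n/2}$, so $\card{I}=\ceil{n/2}$, completing the proof of the claim. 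Applying the claim to $I=D_1$ and $I=D_2$: if $n$ is even or $m$ is odd we get $\floor{m/2}\le\card{A\cap D_j}\le\ceil{m/2}$ for $j=1,2$, hence $\card{A\cap D_1}$ and $\card{A\cap D_2}$ differ by at most $1$, so that $A$ is in fact balanced; if $n$ is odd and $m$ is even we get $\tfrac m2-1\le\card{A\cap D_j}\le\tfrac m2+1$ with the upper value $\tfrac m2+1$ attained only when $\card{D_j}=\ceil{n/2}$, so the two quantities differ by at most $2$, and if they differ by exactly $2$ the block meeting $A$ in $\tfrac m2+1$ points has cardinality $\ceil{n/2}$ and is therefore the larger block. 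Thus $A$ is weakly balanced, and the lemma follows from the third part of Theorem~\ref{theorem_balanced}.

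I expect the main obstacle to be the parity bookkeeping: one must carefully track the interplay of $\ceil{n/2}$, $\floor{(n-m)/2}$, $\ceil{m/2}$, and $m$ across the four parity combinations of $n$ and $m$, and confirm that the single genuinely exceptional case --- $n$ odd and $m$ even --- is exactly the case in which, by Theorem~\ref{theorem_balanced}, maximizers of $W$ on $C_n$ of cardinality $m$ are never balanced, so the weaker ``weakly balanced'' conclusion (difference up to $2$, with the larger-block refinement) is precisely what holds and what is needed. A secondary point that needs care is the geometric step that an arc $I$ with $\card{I}\le\ceil{n/2}$ meeting both components of $A$ must contain a whole component of $V(C_n)\setminus A$; this uses both that $I$ cannot wrap around $C_n$ and the alternating component structure $A_1,B_1,A_2,B_2$ of a good set.
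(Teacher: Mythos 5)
Your proof is correct and follows the same overall strategy as the paper's: reduce to Theorem~\ref{theorem_balanced} by showing that a good set is balanced when $n$ is even or $m$ is odd, and weakly balanced when $n$ is odd and $m$ is even. The only real difference is in how the intersection bounds are verified: the paper reads them off from the explicit canonical coordinates of a good set (Observation~\ref{lemma_good_sets}) in three parity cases, whereas you derive them uniformly from a single claim --- an arc of length at most $\ceil{n/2}$ that meets both components of $A$ must swallow a whole component of $V(C_n)\setminus A$, giving $\card{A\cap I}\le\ceil{n/2}-\floor{(n-m)/2}$ --- and then do the parity bookkeeping once. Your packaging is slightly more self-contained (it never needs the explicit automorphism-normal form, only the component structure of $A$ and its complement), at the cost of the extra geometric step about arcs not wrapping around; both arguments are sound and of comparable length.
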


\begin{proof}
Suppose $A$ is good and let $m=|A|$.
If $m=1$ there are no pairs of vertices in $A$, so $A$ is a maximizer of $W$. 
Similarly, if $m=n-1$, then $|V(C_n) \setminus A|=1$ and so $V(C_n)\setminus A$ is a maximizer of $W$, which implies that $A$ must be a maximizer of $W$ as well \cite{BCL}.

It remains to handle the case where $2\leq m\leq n-2$. Suppose $n$ is even. Then it is clear that whenever $\mathcal{D}=\{D_1,D_2\}$ is an equitable $2$-partition of $V(C_n)$ with connected blocks we have $|D_1|=|D_2|=\frac{n}{2}$ and, using Observation \ref{lemma_good_sets}, the quantities $|A\cap D_1|$ and $|A\cap D_2|$ differ by at most $1$. Hence, $A$ is balanced and by Theorem \ref{theorem_balanced}, $A$ is a maximizer of $W$ on $C_n$.

Suppose $n$ and $m$ are both odd. Let $\mathcal{D}=\{D_1,D_2\}$ be an equitable $2$-partition of $V(C_n)$ with connected blocks. Since $m$ is odd, it follows that $A$ is the union of two disjoint sets of vertices $A=A_1\cup A_2$ with $|A_1|=\frac{m-1}{2}$ and $|A_2|=\frac{m+1}{2}$, and where $C_n[A_1]$ and $C_n[A_2]$ are connected. If $D_1\cap A_2=\emptyset$ then $|D_1\cap A|=\frac{m-1}{2}$ and $|D_2\cap A|=\frac{m+1}{2}$. On the other hand, if $D_1\cap A_2\neq\emptyset$, then $|D_1\cap A|=\frac{m+1}{2}$ and $|D_2\cap A|=\frac{m-1}{2}$. In any case, $|D_1\cap A|$ and $|D_2\cap A|$ differ by at most one, so $A$ is balanced. Thus, by Theorem \ref{theorem_balanced}, $A$ is a maximizer of $W$ on $C_n$.

Suppose $n$ is odd and $m$ is even. Let $\mathcal{D}=\{D_1,D_2\}$ be an equitable $2$-partition of $V(C_n)$ with connected blocks such that $|D_1|=\frac{n+1}{2}$ and $|D_2|=\frac{n-1}{2}$. It follows from Observation \ref{lemma_good_sets} that 
\[\frac{m}{2}\leq |D_1\cap A|\leq\frac{m}{2}+1.\]
If $|D_1\cap A|=\frac{m}{2}$ then $|D_2\cap A|=\frac{m}{2}$. If $|D_1\cap A|=\frac{m}{2}+1$ then $|D_2\cap A|=\frac{m}{2}-1$. In either case $|D_1\cap A|$ and $|D_2\cap A|$ differ by at most two, and moreover, when the difference is two, it is the large block that contains more points of $A$. Therefore, $A$ is weakly balanced, and by Theorem \ref{theorem_balanced}, it is a maximizer of $W$ on $C_n$.
\end{proof}

\begin{lemma}\label{lemma_two_pieces}
    Suppose $A\subset V(C_n)$ is good and $|A|=\ell_1+\ell_2$ is an integer partition of $|A|$. Then there exists a partition $\mathcal{A}=\{A_1,A_2\}$ of $A$ such that $A_1$ and $A_2$ are good with $|A_1|=\ell_1$ and $|A_2|=\ell_2$.
\end{lemma}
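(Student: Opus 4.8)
The plan is to construct the partition $\mathcal A=\{A_1,A_2\}$ explicitly, handling separately the two structurally different shapes a good set can have. Throughout, $\ell_1,\ell_2\ge1$ (so $|A|\ge2$) and $|A|\le n-1$, since $V(C_n)$ is never good.

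First I would dispose of the case $|A|=n-1$, say $A=V(C_n)\setminus\{v\}$. Goodness is invariant under automorphisms and $C_n$ is vertex-transitive, so it suffices to build the partition for one convenient vertex; taking $v=0$ reduces the problem to partitioning the path $\{1,\dots,n-1\}$ into good sets of sizes $\ell_1,\ell_2$. I would cut $\{1,\dots,n-1\}$ into four consecutive blocks $P_1,P_2,P_3,P_4$ (in increasing order) of sizes $\floor{\ell_1/2},\ceil{\ell_2/2},\ceil{\ell_1/2},\floor{\ell_2/2}$, and set $A_1=P_1\cup P_3$, $A_2=P_2\cup P_4$. Since $\ell_1,\ell_2\ge1$ we have $|P_2|,|P_3|\ge1$, so in $C_n$ each of $A_1,A_2$ induces exactly two nonempty arcs (or a single vertex when $\ell_1$, resp.\ $\ell_2$, equals $1$), while $V(C_n)\setminus A_1=\{0\}\cup P_2\cup P_4$ and $V(C_n)\setminus A_2=\{0\}\cup P_1\cup P_3$ each induce exactly two arcs, the vertex $0$ merging onto $P_4$, resp.\ onto $P_1$. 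The chosen block sizes make all four pairs of arc-lengths differ by at most $1$, so $A_1$ and $A_2$ are good by Definition \ref{definition_good}.

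For $2\le|A|=m\le n-2$: if $m=2$, take $A_1,A_2$ to be the two vertices of $A$; otherwise, by Observation \ref{lemma_good_sets} we may assume after an automorphism that $A=L\sqcup R$, where $L$, $R$ and the two components $G_1,G_2$ of $V(C_n)\setminus A$ are nonempty arcs arranged cyclically as $L,G_1,R,G_2$, with $\card{|L|-|R|}\le1$ and $\card{|G_1|-|G_2|}\le1$. Write $\ell_1=a+b$ with $\{a,b\}=\{\floor{\ell_1/2},\ceil{\ell_1/2}\}$, putting the larger part on the larger of $L,R$ (either one when $|L|=|R|$); using $\ell_1\le m-2$ and $\card{|L|-|R|}\le1$ one checks that $0\le a\le|L|-1$ and $0\le b\le|R|-1$. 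Take sub-arcs $L'\subseteq L$, $R'\subseteq R$ of sizes $a,b$ from the ends of $L,R$ in one of the two mirror-image patterns — $L'$ next to $G_2$ and $R'$ next to $G_1$, or $L'$ next to $G_1$ and $R'$ next to $G_2$ — and put $A_1=L'\cup R'$, $A_2=(L\setminus L')\cup(R\setminus R')$. Inspecting the cyclic arrangement $L,G_1,R,G_2$, one sees that in either pattern each of $A_1,A_2,V(C_n)\setminus A_1,V(C_n)\setminus A_2$ induces exactly two nonempty arcs (degenerating harmlessly to a single vertex or a set of size $n-1$ when $\ell_1=1$), and the eight arc-lengths are explicit affine functions of $a,b,|L|,|R|,|G_1|,|G_2|$.

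The crux — and the step I expect to need the most care — is checking that for at least one of the two patterns all four relevant pairs of arc-lengths differ by at most $1$; granting this, $A_1$ and $A_2$ are good, and since $A_1\cap A_2=\emptyset$ and $|A_1|+|A_2|=|A|$, the pair $\{A_1,A_2\}$ is the desired partition. Writing $t=a-b$ (so $t\in\{-1,0,1\}$, of parity $\ell_1$; when $\ell_1$ is odd we take $t$ to have the sign of $|L|-|R|$, with either sign allowed if $|L|=|R|$), the four differences for the first pattern come out to $t$, $(|L|-|R|)-t+(|G_1|-|G_2|)$, $(|L|-|R|)-t$, and $t-(|G_1|-|G_2|)$, and switching to the second pattern reverses the sign of the $(|G_1|-|G_2|)$-term wherever it appears. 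Since $|L|-|R|$ and $|G_1|-|G_2|$ each lie in $\{-1,0,1\}$, and $|t|\le1$ and $\card{(|L|-|R|)-t}\le1$ hold by the choice of $t$, a finite case check on these two signs and the parity of $\ell_1$ shows that whenever one pattern violates a balance inequality the other satisfies all four; this is elementary bookkeeping but carries the real content of the proof.
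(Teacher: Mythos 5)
Your proposal is correct and takes essentially the same approach as the paper: split each of the two arcs of $A$ into a sub-arc of size $\floor{\ell_1/2}$ or $\ceil{\ell_1/2}$ for $A_1$ and give the rest to $A_2$ (your version is in fact more careful than the paper's, since it explicitly verifies the complement conditions and correctly observes that one must choose between the two mirror-image placements; the deferred finite case check on $|L|-|R|$, $|G_1|-|G_2|$ and $t$ does go through as you assert). The only small repair needed is to add ``without loss of generality $\ell_1\leq\ell_2$'' so that the inequality $\ell_1\leq m-2$ you invoke actually holds (it fails when $\ell_2=1$).
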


\begin{proof}
Without loss of generality we can assume that $\ell_1,\ell_2 \geq 2$. Let $m=|A|$ and 
\[A=\{a_1,\ldots,a_{\floor{\frac{m}{2}}}\}\cup\{a_{\floor{\frac{m}{2}}+1},\ldots,a_m\}\]
where $\{a_1,\ldots,a_{\floor{\frac{m}{2}}}\}$ and $\{a_{\floor{\frac{m}{2}}+1},\ldots,a_m\}$ are the connected components of $A$.

Suppose $\ell_1$ and $\ell_2$ have the same parity. Then $m$ is even. Let 
\[A_1=\left\{a_1,\ldots,a_{\floor{\frac{\ell_1}{2}}}\right\}\cup \left\{ a_{\frac{m}{2}+1},\ldots,a_{\frac{m}{2}+\ceil{\frac{\ell_1}{2}}}\right\}\] and $A_2=A\setminus A_1$. It is easy to check that $\mathcal{A}=\left\{A_1,A_2\right\}$ is as desired.

Suppose $\ell_1$ and $\ell_2$ have different parity. Without loss of generality, let $\ell_1$ be even and $\ell_2$ be odd. Then $m$ is odd. Let
\[A_1=\left\{a_1,\ldots,a_{\frac{\ell_1}{2}}\right\}\cup\left\{a_{\floor{\frac{m}{2}}+1},\ldots,a_{\ceil{\frac{m}{2}}+\frac{\ell_1}{2}}\right\}\]
and $A_2=A\setminus A_1$. Then, again, one may easily check that $\mathcal{A}$ is as desired.
\end{proof}

\begin{lemma}\label{lemma_good_maximizers}
Suppose $n=\ell_1+\cdots+\ell_k$ is an integer partition of a positive integer $n$. Then there is a partition $\mathcal{A}=\{A_1,\ldots,A_k\}$ of $V(C_n)$ such that $A_i$ is good and $|A_i|=\ell_i$ for all $i\in \{1,\ldots,k\}$.
\end{lemma}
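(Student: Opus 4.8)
The plan is to induct on $k$. For $k=1$ there is nothing to do: $A_1=V(C_n)$ is the only set of its cardinality (and the genuinely relevant instances of the lemma, used later to produce weak maximizers of prescribed type, have $k\geq 2$). The real base case is $k=2$. A partition $n=\ell_1+\ell_2$ into positive parts forces $1\leq\ell_1,\ell_2\leq n-1$, and by Definition \ref{definition_good} a set is good precisely when its complement is good, so it suffices to exhibit a single good set $A_1\subseteq V(C_n)$ with $|A_1|=\ell_1$; then $\{A_1,\,V(C_n)\setminus A_1\}$ is the desired partition. If $\ell_1=1$, take $A_1$ to be a single vertex (almost good, with complement of size $n-1$ almost good); if $\ell_1=n-1$, take the complement of a single vertex; and if $2\leq\ell_1\leq n-2$ (which forces $n\geq 4$), take the explicit set furnished by Observation \ref{lemma_good_sets}, whose two arcs, and whose complement's two arcs, differ in size by at most one by construction. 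A handful of very small cases are checked directly.

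For the inductive step, suppose $k\geq 3$ and that the lemma holds for all integer partitions into $k-1$ parts. Given $n=\ell_1+\cdots+\ell_k$, apply the inductive hypothesis to the coarser partition $n=(\ell_1+\ell_2)+\ell_3+\cdots+\ell_k$, which has $k-1$ positive parts. This yields a partition $\{B,A_3,\ldots,A_k\}$ of $V(C_n)$ into good sets with $|B|=\ell_1+\ell_2$ and $|A_i|=\ell_i$ for $3\leq i\leq k$. Since $B$ is good and $|B|=\ell_1+\ell_2$, Lemma \ref{lemma_two_pieces} provides a partition $\{A_1,A_2\}$ of $B$ into good sets with $|A_1|=\ell_1$ and $|A_2|=\ell_2$. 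Then $\mathcal{A}=\{A_1,A_2,A_3,\ldots,A_k\}$ is a partition of $V(C_n)$ into good sets of the prescribed sizes, completing the induction.

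I do not expect a real obstacle here: everything reduces to Lemma \ref{lemma_two_pieces} together with the symmetry of \emph{good} under complementation. The only step carrying any content is the base case $k=2$, namely verifying that a good set of each cardinality $m\in\{1,\ldots,n-1\}$ exists, which is exactly what Observation \ref{lemma_good_sets} delivers for $2\leq m\leq n-2$, with the two extreme cardinalities handled by hand. The one point requiring care is to route the induction through the merged partition $(\ell_1+\ell_2,\ell_3,\ldots,\ell_k)$, so that Lemma \ref{lemma_two_pieces} is applied to an honestly good block produced by the inductive hypothesis; one cannot instead peel off parts one at a time, since the full vertex set $V(C_n)$ is not good when $n\geq 3$.
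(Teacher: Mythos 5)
Your proof is correct, and it takes a route that differs from the paper's in one substantive way. The paper runs a single forward recursion: it applies Lemma \ref{lemma_two_pieces} to $A_1'=V(C_n)$ to peel off $A_1$ of size $\ell_1$, then repeatedly applies Lemma \ref{lemma_two_pieces} to the remaining good block to peel off $A_2,A_3,\ldots$ in turn. You instead induct on $k$, handle $k=2$ directly via Observation \ref{lemma_good_sets} together with the symmetry of ``good'' under complementation, and in the inductive step merge $\ell_1+\ell_2$ into one part so that Lemma \ref{lemma_two_pieces} is only ever invoked on a block that the inductive hypothesis has already certified as good. The point you flag at the end is a real one: $V(C_n)$ is not good for $n\geq 2$ (it is connected, so not almost good unless $n=1$), so the paper's very first appeal to Lemma \ref{lemma_two_pieces} does not literally satisfy that lemma's hypothesis; the intended fix is exactly what your base case supplies, namely producing the first good split of $V(C_n)$ by hand from Observation \ref{lemma_good_sets} (or by noting that the construction in the proof of Lemma \ref{lemma_two_pieces} still goes through when $V(C_n)$ is viewed as two arcs of nearly equal length). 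Your version is marginally longer but cleaner on this edge condition; both arguments otherwise rest on the same splitting lemma and are equivalent in substance.
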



\begin{proof}
We will inductively construct $\mathcal{A}$. Using Lemma \ref{lemma_two_pieces}, let $\mathcal{A}_1=\{A_1,A_2'\}$ be a partition of $A_1'=V(C_n)$ such that $A_1$ and $A_2'$ are good, $|A_1|=\ell_1$ and $|A_2'|=\ell_2+\cdots+\ell_k$. 
Suppose $1\leq j < k$ and that for all $i$ with $1\leq i\leq j$ we've already defined a partition $\mathcal{A}_i=\{A_i,A_{i+1}'\}$ of $A_i'$ such that $A_i$ and $A_{i+1}'$ are good, $|A_i|=\ell_i$ and $|A_{i+1}'|=\ell_{i+1}+\cdots+\ell_k$. By Lemma \ref{lemma_two_pieces}, there is a partition $\mathcal{A}_{i+1}=\left\{A_{i+1},A_{i+2}'\right\}$ of $A_{i+1}'$ such that $A_{i+1}$ and $A_{i+2}'$ are good, $|A_{i+1}|=\ell_{i+1}$ and $|A_{i+2}'|= \ell_{i+2}+\cdots+\ell_k$. Let $\mathcal{A}=\{A_1,\ldots,A_k\}$ where $A_k=A_k'$. It follows that $\mathcal{A}$ is the desired partition of $V(C_n)$.
\end{proof}

\begin{theorem}\label{theorem_weak_max_on_C_n}
A surjective function $f:V(C_n)\to\{1,\ldots,k\}$ is a $k$-color weak maximizer of $W$ on $C_n$ if and only if for all $i\in\{1,\ldots,k\}$ the set $V_i=f^{-1}(i)$ is a maximizer of $W$.
\end{theorem}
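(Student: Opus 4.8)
The plan is to run both implications through the single quantity $\Sigma := \sum_{i=1}^{k} M(\ell_i)$, where $\type(f) = (\ell_1,\ldots,\ell_k)$ with $\ell_1 \le \cdots \le \ell_k$ and $M(m)$ denotes $\max\{W_{C_n}(A') : A' \subseteq V(C_n),\ |A'| = m\}$. The elementary fact underlying everything is this: for any surjective $g : V(C_n) \to \{1,\ldots,k\}$ with $\type(g) = \type(f)$, the multiset of cardinalities of the color classes $g^{-1}(1),\ldots,g^{-1}(k)$ is exactly $\{\ell_1,\ldots,\ell_k\}$ with multiplicity, so that $W(g) = \sum_i W_{C_n}(g^{-1}(i)) \le \Sigma$, with equality precisely when every color class of $g$ is a maximizer of $W$ on $C_n$.

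The backward direction ($\Leftarrow$) is then immediate: if each $V_i = f^{-1}(i)$ is a maximizer of $W$, then $W(f) = \sum_i M(|V_i|) = \Sigma \ge W(f')$ for every surjective $f'$ with $\type(f') = \type(f)$; since $f$ is surjective this says $f \in \WM_k(C_n)$.

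The forward direction ($\Rightarrow$). Suppose $f \in \WM_k(C_n)$. Surjectivity forces $\ell_i \ge 1$ for all $i$, so $n = \ell_1 + \cdots + \ell_k$ is an integer partition of $n$; apply Lemma \ref{lemma_good_maximizers} to obtain a partition $\{A_1,\ldots,A_k\}$ of $V(C_n)$ with each $A_i$ good and $|A_i| = \ell_i$, and let $f'$ be the coloring defined by $f'(v) = i$ for $v \in A_i$. Then $f'$ is surjective and $\type(f') = \type(f)$, and by Lemma \ref{lemma_good_implies_maximier} each $A_i$ is a maximizer of $W$, so $W(f') = \Sigma$. Weak maximality of $f$ gives $W(f) \ge W(f') = \Sigma$, while the elementary fact above gives $W(f) \le \Sigma$; hence $W(f) = \Sigma$, and the equality clause forces $W_{C_n}(V_i) = M(\ell_i)$ for every $i$, i.e. each $V_i$ is a maximizer of $W$.

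All of the substance is packed into Lemmas \ref{lemma_two_pieces}, \ref{lemma_good_maximizers} and \ref{lemma_good_implies_maximier}, which are already established; given those I do not expect a genuine obstacle. The only point needing care is the bookkeeping in the elementary fact above — pairing up the color classes of $f$, of $f'$, and of an arbitrary competitor $g$ with the sorted tuple $\type(f)$ while respecting multiplicities — together with the trivial use of surjectivity to ensure every part of the integer partition is positive so that the auxiliary coloring $f'$ is again surjective.
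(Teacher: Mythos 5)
Your proof is correct and takes essentially the same route as the paper's: both directions come down to comparing $W(f)$ with $\sum_i M(\ell_i)$, a value attained by a same-type coloring whose classes are the good sets supplied by Lemmas \ref{lemma_good_maximizers} and \ref{lemma_good_implies_maximier}. Your write-up simply makes explicit the bookkeeping that the paper compresses into ``this easily follows.''
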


\begin{proof}
The backwards direction is immediate. Let us prove the forward direction. For the forward direction, suppose $V_1=f^{-1}(1)$ is not a maximizer of $W$. We must show that $f$ is not a $k$-color weak maximizer of $W$. It suffices to show that there exists a surjective function $f':V(C_n)\to\{1,\ldots,k\}$ with the same type as $f$ such that all of the color classes of $f'$ are maximizers of $W$ on $C_n$. But, this easily follows from Lemma \ref{lemma_good_implies_maximier} and Lemma \ref{lemma_good_maximizers}.
\end{proof}

\section{Local weak maximizers on paths}\label{section_local_weak_max_paths}

Suppose $n$ and $k$ are positive integers and $k\leq n$. For a given tuple of positive integers $t=(n_1,\ldots,n_k)$ such that 
\[n_1\leq \cdots\leq n_k\]
and
\[n_1+\cdots+n_k=n,\] we define a collection $\C_t$ of surjective $k$-colorings of $V(P_n)$ that will coincide with the collection of $k$-color weak maximizers of $W$ on $P_n$ of type $t$, as well as with the set of $k$-color local weak maximizers of $W$ on $P_n$ of type $t$. 

Before we give the full definition of $\C_t$, which will be required below, let us first provide an equivalent definition,\footnote{We would like to thank an anonymous referee for suggesting this simplified definition of $\C_t$.} which may aid the reader in comprehending the longer and more notationally complex definition. We build up a coloring of $V(P_n)$ in $\C_t$ in stages, starting with all vertices uncolored. At stage $i$, each color $j\in\{1,\ldots,k\}$ is assigned a \emph{capacity} $r^i_j$, which is the number of vertices required to be that color, $n_i$, minus the number of vertices that have already been assiged color $j$. We repeat the following step until the maximum of all capacities $r^i_j$, over all colors, at current stage $i$, is less than $2$: let $K\geq 1$ be the number of colors whose capacities at the current stage are equal to the maximum of all capacities at the same stage, assign one of the least $K$ uncolored vertices to each of these colors, and assign one of the greatest $K$ uncolored vertices to each of these colors (in any order), thus decreasing the capacities of these color by either $1$ or $2$. Finally, when all capacities are $0$ or $1$, assign one of the remaining vertices to each color with capacity $1$ (if any). Then $\C_t$ is the class of all colorings $f:V(P_n)\to\{1,\ldots,k\}$ that can be obtained in this way.

Now let us give the full definition of $\C_t$: If $x=(x_1,\ldots,x_k)$ is a finite tuple of real numbers, we let 
\[M(x)=\{i\st 1\leq i\leq k\text{ and }x_i=\max_{1\leq j\leq k}x_j\}.\]
We define a sequence of $k$-tuples $r^1,r^2,\ldots$, where $r^i=(r^i_1,\ldots,r^i_k)$ as follows. 
\begin{itemize}
    \item Let $r^1=t=(n_1,\ldots,n_k)$. 
    \item Given $r^i$, if $\max(r^i)>1$ then we let $r^{i+1}$ be the $k$-tuple obtained from $r^i$ by subtracting $2$ from each max in $r^i$; that is, 
\[r^{i+1}_j =\begin{cases}
    r^i_j-2 & \text{ if $j\in M(r^i)$}\\
    r^i_j & \text{ otherwise}.\\
\end{cases}\]
    \item Given $r^i$, if $\max(r^i)=1$ then we let $r^{i+1}$ be the $k$-tuple obtained from $r^i$ by subtracting $1$ from each max in $r^i$; that is, 
\[r^{i+1}_j =\begin{cases}
    r^i_j-1 & \text{ if $j\in M(r^i)$}\\
    r^i_j & \text{otherwise}.\\
    \end{cases}
    \]
\end{itemize}
Let $i^*$ be the greatest positive integer $i$ for which $\max(r^i)>0$. For $1\leq i\leq i^*$, let $m_i=|M(r^i)|$ be the number of maximums that occur in $r^i$. 

We use the $m_i$'s and $r^i$'s, which are computed using the type $t$, for $1\leq i\leq i^*$ to define a family $\C_t$ of surjective functions $f:V(P_n)\to\{1,\ldots,k\}$. 
We define a partition $\mathcal{D}_t$ of $V(P_n)$ as follows. Let
\begin{align*}
D_1^L&=[1,m_1]\text{\ \ \ and}\\
D_1^R&=[n-m_1+1,n].
\end{align*}
For $1<i\leq i^*$, let
\begin{align*}
D_i^L&=[1+m_1+\cdots+m_{i-1}, m_1+\cdots+m_i]\text{\ \ \ and}\\
D_i^R&=[n-(m_1+\cdots+m_i)+1, n-(m_1+\cdots+m_{i-1})].
\end{align*}
If $\max(r^{i^*})>1$, we let \[\mathcal{D}=\{D_1^L,D_2^L,\ldots,D_{i^*-1}^L,D_{i^*}^L,D_{i^*}^R,D_{i^*-1}^R,\ldots,D_2^R,D_1^R\},\]
and if $\max(r^{i^*})=1$, we let
\[\mathcal{D}=\{D_1^L,D_2^L,\ldots,D_{i^*-1}^L,D_{i^*}^L,D_{i^*-1}^R,\ldots,D_2^R,D_1^R\}.\]

We define $\C_t$ to be the family of functions $f:V(P_n)\to \{1,\ldots,k\}$ such that:
\begin{enumerate}
    \item if $\max(r^{i^*})>1$, then for $1\leq i\leq i^*$, $f\restrict D_i^L:D_i^L\to M(r^i)$ and $f\restrict D_i^R:D_i^R\to M(r^i)$ are both bijections, and
    \item if $\max(r^{i^*})=1$, then for $1\leq i<i^*$, $f\restrict D_i^L:D_i^L\to M(r^i)$ and $f\restrict D_i^R:D_i^R\to M(r^i)$ are both bijections, and  $f\restrict D_{i^*}^L:D_{i^*}^L\to M(r^{i^*})$ is a bijection.
\end{enumerate}

It follows easily, by construction, that the functions in $\C_t$ are surjective to $\{1,\ldots,k\}$.

\begin{lemma}\label{lemma_swapping_surj}
If $f\in \C_t$ then $f:V(P_n)\to\{1,\ldots,k\}$ is a surjective $k$-coloring. 
\end{lemma}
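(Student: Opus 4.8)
The plan is to verify that every color $j\in\{1,\ldots,k\}$ appears in the image of $f$, by locating a single block of the partition $\mathcal{D}_t$ on which the defining conditions of $\C_t$ force $f$ to take the value $j$. The useful observation is that for each $i$ with $1\le i\le i^*$ the ``left'' block $D_i^L$ belongs to $\mathcal{D}_t$ in \emph{both} cases of the definition (the only difference between the cases $\max(r^{i^*})>1$ and $\max(r^{i^*})=1$ is whether $D_{i^*}^R$ is also included), and on $D_i^L$ the function $f$ restricts to a bijection onto $M(r^i)$ --- this is immediate from condition (1) when $\max(r^{i^*})>1$, and from condition (2) when $\max(r^{i^*})=1$ (the clause for $i<i^*$ together with the separate clause for $i=i^*$). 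Consequently $M(r^i)\subseteq\im(f)$ for every $1\le i\le i^*$, so it suffices to establish the purely combinatorial fact that $\{1,\ldots,k\}=\bigcup_{i=1}^{i^*}M(r^i)$; that is, every index is maximal in $r^i$ at some stage $i\le i^*$.

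For this I would argue by contradiction. Fix $j$ and suppose $j\notin M(r^i)$ for every $1\le i\le i^*$. Since $\max(r^{i^*})>0$ and the $r^i$ have integer entries, $\max(r^{i^*})\ge 1$, so $r^{i^*+1}$ is defined. Both of the recursion rules defining the sequence $r^1,r^2,\ldots$ leave every non-maximal coordinate unchanged, so the assumption gives $r^{i^*+1}_j=r^{i^*}_j=\cdots=r^1_j=n_j\ge 1$. But then $\max(r^{i^*+1})\ge n_j\ge 1>0$, contradicting the choice of $i^*$ as the greatest index with $\max(r^i)>0$. Hence every $j$ lies in some $M(r^i)$ with $i\le i^*$, as claimed.

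Putting the pieces together completes the proof: given $j\in\{1,\ldots,k\}$, choose $i\le i^*$ with $j\in M(r^i)$, note that $D_i^L\in\mathcal{D}_t$, and use that $f\restrict D_i^L$ is a bijection onto $M(r^i)\ni j$ to conclude $j\in\im(f)$. I do not expect a real obstacle here; the only thing to be careful about is keeping the two cases $\max(r^{i^*})>1$ and $\max(r^{i^*})=1$ straight when quoting the definitions of $\C_t$ and $\mathcal{D}_t$, and noting that each $D_i^L$ with $i\le i^*$ is a nonempty subset of $V(P_n)$ (which is part of $\mathcal{D}_t$ being a partition of $V(P_n)$, and is in any case forced since $M(r^i)\ne\emptyset$).
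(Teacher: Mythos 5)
Your proof is correct, and it fills in exactly the argument the paper leaves implicit (the paper gives no proof, remarking only that surjectivity ``follows easily, by construction''). Your key step --- that every color $j$ must lie in $M(r^i)$ for some $i\le i^*$, since otherwise $r^{i^*+1}_j=n_j\ge 1$ would contradict the maximality of $i^*$ --- together with the observation that $f\restrict D_i^L$ is a bijection onto $M(r^i)$ in both cases of the definition, is precisely the intended justification.
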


In the following examples we illustrate the construction and visualization of the collection $\C_t$ for a particular value of $t$.

\begin{example}
Let us consider the collection $\C_t$, where $t=(2,6,6)$ and hence $n=14$. To construct $\C_{(2,6,6)}$ we first compute the following using the above definitions:
\[\begin{tabular}{l@{\hspace{1cm}} l@{\hspace{1cm}} l@{\hspace{1cm}} l}
$r^1=(2,6,6)$ & $m_1=2$ & $D_1^L=[1,2]$ & $D_1^R=[13,14]$\\
$r^2=(2,4,4)$ & $m_2=2$ & $D_2^L=[3,4]$ & $D_2^R=[11,12]$\\
$r^3=(2,2,2)$ & $m_3=3$ & $D_3^L=[5,7]$ & $D_3^R=[8,10]$\\
\end{tabular}
\]
The collection $\C_t$ is represented in Figure \ref{figure_P_14_a} in the sense that not only is the displayed coloring $f$ in $\C_{(2,6,6)}$, but so too is any coloring obtained from $f$ by permuting colors within individual blocks of the partition $\D_{(2,6,6)}=\{D_1^L,D_2^L,D_3^L,D_3^R,D_2^R,D_1^R\}$. 
Recall that $S_{6,7}(f)$ is the coloring obtained from $f$ by swapping the colors of vertex $6$ and $7$. Notice that $W(S_{6,7}(f))-W(f)=0$. Indeed, for any member of $\C_{(2,6,6)}$, permuting colors of vertices within blocks of the partition $\D_{(2,6,6)}$ does not change the Wiener index; this holds in general by Corollary \ref{corollary_C_t_same_Wiener_index}.
\end{example}

\begin{figure}

\centering
\begin{subfigure}[b]{\textwidth}

\centering
\begin{tikzpicture}[scale=0.7]

\definecolor{color3}{rgb}{0.9,0.9,0.9}    
\definecolor{color2}{rgb}{0.5,0.5,0.5}    
\definecolor{color1}{rgb}{0.2,0.2,0.2}    

\definecolor{color4}{rgb}{1, 0.4, 0.4}    
\definecolor{color5}{rgb}{0.4, 0.4, 1}    
\definecolor{color6}{rgb}{1, 1, 0}    

\draw (1,0)--(14,0);

\foreach \i in {1,3,5,8,12,14}{
    \node[draw, fill=color4, circle, minimum size=2mm, inner sep=0pt] (\i) at (\i, 0) {};
}

\foreach \i in {2,4,6,9,11,13}{
    \node[draw, fill=color6, circle, minimum size=2mm, inner sep=0pt] (\i) at (\i, 0) {};
}

\foreach \i in {7,10}{
    \node[draw, fill=color5, circle, minimum size=2mm, inner sep=0pt] (\i) at (\i, 0) {};
}

    \foreach \i in {1, ..., 14} {
        \node at (\i, -0.8) {\i};  
    }

\foreach \i/\j in {1/2,3/2,5/3}{

\draw[rounded corners=2mm] (\i-0.4, -0.4) rectangle (\i+\j-1+0.4, 0.4) {};

\draw[rounded corners=2mm] (14-\i+1+0.4, -0.4) rectangle (14-\i+1-\j+1-0.4, 0.4) {};

}

\foreach \i/\x in {1/1.5,2/3.5,3/6}{
    \node at (\x, 1) {$D_\i^L$};  
}
\foreach \i/\x in {1/13.5,2/11.5/5,3/9}{
    \node at (\x, 1) {$D_\i^R$};  
}
    
\end{tikzpicture}
\caption{\small The collection $\C_{(2,6,6)}$.}
\label{figure_P_14_a}
\end{subfigure}

\begin{subfigure}[b]{\textwidth}

\centering

\begin{tikzpicture}[scale=0.7]

\definecolor{color3}{rgb}{0.9,0.9,0.9}    
\definecolor{color2}{rgb}{0.5,0.5,0.5}    
\definecolor{color1}{rgb}{0.2,0.2,0.2}    

\definecolor{color4}{rgb}{1, 0.4, 0.4}    
\definecolor{color5}{rgb}{0.4, 0.4, 1}    
\definecolor{color6}{rgb}{1, 1, 0}    

\draw (1,0)--(14,0);

\foreach \i in {1,3,6,9,12,14}{
    \node[draw, fill=color4, circle, minimum size=2mm, inner sep=0pt] (\i) at (\i, 0) {};
}

\foreach \i in {2,4,7,10,13}{
    \node[draw, fill=color6, circle, minimum size=2mm, inner sep=0pt] (\i) at (\i, 0) {};
}

\foreach \i in {5,8,11}{
    \node[draw, fill=color5, circle, minimum size=2mm, inner sep=0pt] (\i) at (\i, 0) {};
}

    \foreach \i in {1, ..., 14} {
        \node at (\i, -0.8) {\i};  
    }

\foreach \i/\j in {1/1,2/1,3/1,4/2,6/1,7/2}{

\draw[rounded corners=2mm] (\i-0.4, -0.4) rectangle (\i+\j-1+0.4, 0.4) {};

\draw[rounded corners=2mm] (14-\i+1+0.4, -0.4) rectangle (14-\i+1-\j+1-0.4, 0.4) {};

}

\foreach \i/\x in {1/1,2/2,3/3,4/4.5,5/6,6/7.5}{
    \node at (\x, 1) {$D_\i^L$};  
}
\foreach \i/\x in {1/14,2/13,3/12,4/10.5,5/9}{
    \node at (\x, 1) {$D_\i^R$};  
}
    
\end{tikzpicture}
\caption{\small The collection $\C_{(3,5,6)}$.}
\label{figure_P_14_b}
\end{subfigure}

\caption{}
\label{figure_P_14}
\end{figure}

\begin{example}
Let us consider the collection $\C_t$ where $t=(3,5,6)$. Again, using the definitions above we can easily compute the following. 
\[
\begin{tabular}{l@{\hspace{1cm}} l@{\hspace{1cm}} l@{\hspace{1cm}} l}
$r^1 = (3, 5, 6)$ & $m_1 = 1$ & $D_1^L = [1, 1]$ & $D_1^R = [14, 14]$ \\
$r^2 = (3, 5, 4)$ & $m_2 = 1$ & $D_2^L = [2, 2]$ & $D_2^R = [13, 13]$ \\
$r^3 = (3, 3, 4)$ & $m_3 = 1$ & $D_3^L = [3, 3]$ & $D_3^R = [13, 13]$ \\
$r^4 = (3, 3, 2)$ & $m_4 = 2$ & $D_4^L = [4, 5]$ & $D_4^R = [10, 11]$ \\
$r^5 = (1, 1, 2)$ & $m_5 = 1$ & $D_5^L = [6, 6]$ & $D_5^R = [9, 9]$ \\
$r^6 = (1, 1, 0)$ & $m_6 = 2$ & $D_6^L = [7, 8]$ & \\
\end{tabular}
\]

As in the previous example, the collection $\C_{(3,5,6)}$ is represented in Figure \ref{figure_P_14_b}. Suppose that we want to swap the colors of vertex 4 and vertex 5. We have that  $L_4^f=0$, $R_4^f=2$, $L_5^f=1$, and $R_5^f=3$. 
Hence, $L_4^f-R_4^f=-2$
and $R_5^f-L_5^f=2$, with $\max(r^4)=3$. We see that, $W(S_{4,5}(f))-W(f)=0$.
\end{example}

Let us now verify that the collection $\C_t$ is closed under the operations of swapping colors of adjacent vertices within blocks of the partition $\D_t$. In order to do this, we will compute the change in Wiener index after a color swap in terms of several quantities that we define now. Suppose $f:V(P_n)\to\{1,\ldots,k\}$. For $v\in V(P_n)$ we let $L_v^f$ denote the number of vertices of $P_n$ to the left of $v$ that are assigned color $f(v)$; that is,
\[L_v^f=\left|[1,v-1]\cap f^{-1}(f(v))\right|.\]
Similarly, $R_v^f$ is the number of vertices of $P_n$ to the right of $v$ that are assigned color $f(v)$ by $f$; that is,
\[R_v^f=\left|[v+1,n]\cap f^{-1}(f(v))\right|.\]
Notice that after swapping the color of vertex $v$ and $v+1$, the change in Wiener index is
\[W(S_{v,v+1}(f))-W(f)=L_v^f-R_v^f+R_{v+1}^f-L_{v+1}^f.\]

\begin{lemma}\label{lemma_swapping_colors}
Suppose $f\in \C_t$ and $D\in\D_t$ such that $v,v+1\in D$. Then the coloring $S_{v,v+1}(f)$ obtained from $f$ by swapping the colors of vertex $v$ and $v+1$ has the same Wiener index as $f$. That is, $W(S_{v,v+1}(f))=W(f)$.
\end{lemma}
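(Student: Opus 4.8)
The plan is to use the identity recorded just above the statement,
\[W(S_{v,v+1}(f))-W(f)=L_v^f-R_v^f+R_{v+1}^f-L_{v+1}^f,\]
and to show its right-hand side is $0$ by establishing that $L_v^f-R_v^f=L_{v+1}^f-R_{v+1}^f$.

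First I would reduce to the case in which $D$ is a left block $D_i^L$. Let $\rho\colon V(P_n)\to V(P_n)$, $\rho(p)=n+1-p$, be the path-reversing automorphism. Directly from the definition of $\D_t$ one sees that $\rho$ permutes the blocks of $\D_t$ (it swaps $D_j^L$ with $D_j^R$ whenever both occur, and fixes $D_{i^*}^L$ when $\max(r^{i^*})=1$), whence $f\circ\rho\in\C_t$. Since $W(h\circ\rho)=W(h)$ for any coloring $h$, and $S_{v,v+1}(f)\circ\rho=S_{\rho(v),\rho(v+1)}(f\circ\rho)$, the case $v,v+1\in D_j^R$ for $f$ follows from the case $\rho(v+1),\rho(v)\in\rho(D_j^R)$ (consecutive vertices of a left block) for $f\circ\rho$. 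So assume $v,v+1\in D_i^L$, and write $c=f(v)$, $c'=f(v+1)$; these are distinct members of $M(r^i)$ by the definition of $\C_t$.

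Next I would obtain a closed formula for $L_v^f-R_v^f$. For a color $d$ let $N_d=\{\,i'\mid 1\le i'\le i^*,\ d\in M(r^{i'})\,\}$. From the construction of $\C_t$, a color $d$ occupies exactly one vertex of $D_{i'}^L$ for every $i'\in N_d$, exactly one vertex of $D_{i'}^R$ for every $i'\in N_d$ with $D_{i'}^R\in\D_t$, and no other vertex; and the blocks appear along $P_n$ in the order $D_1^L,\dots,D_{i^*}^L,D_{i^*}^R,\dots,D_1^R$, with $D_{i^*}^R$ omitted exactly when $\max(r^{i^*})=1$. Since $v$ is the only color-$c$ vertex of $D_i^L$, counting the color-$c$ vertices on each side of $v$ gives $L_v^f=|N_c\cap[1,i-1]|$ and, after simplification,
\[L_v^f-R_v^f=-1-2\,|N_c\cap[i+1,i^*]|+\eta_c,\qquad\eta_c=\begin{cases}1&\text{if }i^*\in N_c\text{ and }\max(r^{i^*})=1,\\0&\text{otherwise,}\end{cases}\]
and likewise for $(c',v+1)$. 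Crucially, the right-hand side depends on the color only through the set $N_c\cap[i,i^*]$, because $\max(r^{i^*})$ does not involve the color.

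Finally I would prove the tie-persistence fact $N_c\cap[i,i^*]=N_{c'}\cap[i,i^*]$. Since $c,c'\in M(r^i)$, we have $r^i_c=r^i_{c'}=\max(r^i)$; and if $r^{i'}_c=r^{i'}_{c'}$ for some $i'\ge i$, then $c$ and $c'$ are either both in $M(r^{i'})$ or both out of it, so the same amount (namely $2$, $1$, or $0$) is subtracted from each coordinate in forming $r^{i'+1}$, giving $r^{i'+1}_c=r^{i'+1}_{c'}$. By induction $r^{i'}_c=r^{i'}_{c'}$ for all $i'\ge i$, hence $c\in M(r^{i'})\iff c'\in M(r^{i'})$ there, which is the claim. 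Combining this with the formula above yields $L_v^f-R_v^f=L_{v+1}^f-R_{v+1}^f$, and therefore $W(S_{v,v+1}(f))=W(f)$. The step I expect to be most delicate is pinning down the closed formula, which requires careful bookkeeping of the two boundary irregularities — that $D_{i^*}^R$ is missing precisely when $\max(r^{i^*})=1$, and that the block $D_i^L$ containing $v$ and $v+1$ may itself be $D_{i^*}^L$ — these being exactly what the correction term $\eta_c$ records; the remaining ingredients are routine.
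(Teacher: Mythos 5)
Your proof is correct and follows essentially the same route as the paper's: reduce to a left block, invoke the swap identity, and show that $L_v^f-R_v^f$ takes the same value for both colors occurring in that block. Where the paper simply asserts ``by construction'' that $L_v^f-R_v^f=-(\max(r^i)-1)$ for any color in $M(r^i)$, you substitute the explicit forward count $-1-2\,|N_c\cap[i+1,i^*]|+\eta_c$ together with the tie-persistence induction; the two are consistent (since $r^i_c=2\,|N_c\cap[i,i^*]|-\eta_c$), so your argument is just a more detailed justification of the same step.
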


\begin{proof}
Without loss of generality, let us assume that $v,v+1\in D_i^L$ where $1\leq i\leq i^*$. We have 
\begin{align*}
W(S_{v,v+1}(f))-W(f)&=L_v^f-R_v^f+R_{v+1}^f-L_{v+1}^f.\\
\end{align*}
But, by construction 
\[L_v^f-R_v^f=-(\max(r^i)-1)\]
and 
\[R_{v+1}^f-L_{v+1}^f=\max(r^i)-1,\]
hence $W(S_{v,v+1}(f))-W(f)=0$.
\end{proof}

Since any function in $\C_t$ can be obtained from any other by applying a finite sequence of color swaps between adjacent vertices in blocks of the partition $\D_t$, we easily obtain the following.

\begin{corollary}\label{corollary_C_t_same_Wiener_index}
If $f,g\in \C_t$ then $W(f)=W(g)$.
\end{corollary}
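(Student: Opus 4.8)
The final statement to prove is Corollary \ref{corollary_C_t_same_Wiener_index}: if $f,g\in\C_t$ then $W(f)=W(g)$.

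The plan is to deduce this from Lemma \ref{lemma_swapping_colors} together with the structural observation that any two colorings in $\C_t$ differ only by permutations of colors within individual blocks of the partition $\D_t$. The key input from Lemma \ref{lemma_swapping_colors} is that swapping the colors of two \emph{adjacent} vertices lying in the same block $D\in\D_t$ leaves the Wiener index unchanged, and moreover (one should check this is immediate from the definitions, or invoke Lemma \ref{lemma_swapping_surj}) that the result of such a swap is again a member of $\C_t$ — the blocks $D_i^L$ and $D_i^R$ are exactly the fibers over which the bijection constraints in the definition of $\C_t$ are imposed, so permuting colors inside a block preserves membership.

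First I would observe that within a single block $D\in\D_t$, the restriction $f\restrict D$ and $g\restrict D$ are both bijections onto the same set $M(r^i)$ (for the appropriate $i$, reading off from whether $D$ is a left block $D_i^L$ or a right block $D_i^R$, and noting that in the $\max(r^{i^*})=1$ case the rightmost block is $D_{i^*}^L$ only). Hence $g\restrict D$ is obtained from $f\restrict D$ by a permutation of the vertices of $D$, and since $D$ is an interval of consecutive integers, that permutation is a product of transpositions of \emph{adjacent} elements of $D$. Doing this block by block, we conclude that $g$ is obtained from $f$ by a finite sequence of color swaps $S_{v,v+1}$, each with $v,v+1$ in a common block of $\D_t$, where at every intermediate stage the coloring remains in $\C_t$ (so that Lemma \ref{lemma_swapping_colors} applies at each step). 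Applying Lemma \ref{lemma_swapping_colors} repeatedly along this sequence gives $W(f)=W(g)$.

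The only genuine subtlety — the part worth stating carefully rather than waving at — is the claim that $\C_t$ really is closed under adjacent-in-block swaps and that every element of $\C_t$ is reachable from every other by such swaps; both follow directly from the fact that the defining conditions on $\C_t$ only constrain the \emph{image} $f\restrict D_i^{L/R}$ to be a bijection onto $M(r^i)$ and impose nothing on which vertex of the block gets which color. This is the content of the sentence preceding the corollary in the text (``any function in $\C_t$ can be obtained from any other by applying a finite sequence of color swaps between adjacent vertices in blocks of the partition $\D_t$''), so the proof of the corollary is essentially a one-line invocation: given $f,g\in\C_t$, write $g = S_{v_m,v_m+1}\circ\cdots\circ S_{v_1,v_1+1}(f)$ with each swap internal to a block of $\D_t$ and each partial composition in $\C_t$, then apply Lemma \ref{lemma_swapping_colors} $m$ times to get $W(f)=W(S_{v_1,v_1+1}(f))=\cdots=W(g)$.
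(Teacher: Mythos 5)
Your proof is correct and follows the same route as the paper: the paper dispatches this corollary with the single observation that any two members of $\C_t$ are connected by a finite sequence of adjacent-vertex color swaps within blocks of $\D_t$, and then invokes Lemma \ref{lemma_swapping_colors} at each step. You have simply made explicit the (easy) verification that each block restriction is a bijection onto the same set $M(r^i)$, that adjacent transpositions generate the needed permutation on an interval, and that $\C_t$ is closed under such swaps.
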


We are ready to prove the main result of this section.

\begin{theorem}
Suppose $g:V(P_n)\to\{1,\ldots,k\}$ is a surjective $k$-coloring and $t=\type(g)=(n_1,\ldots,n_k)$. The following are equivalent.
\begin{enumerate}
\item $g$ is a $k$-color weak maximizer of $W$ on $P_n$.
\item $g$ is a $k$-color local weak maximizer of $W$ on $P_n$.
\item $g\in \C_t$.
\end{enumerate}
\end{theorem}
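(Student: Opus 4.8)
The plan is to prove the cycle of implications $(1)\Rightarrow(2)\Rightarrow(3)\Rightarrow(1)$. The implication $(1)\Rightarrow(2)$ is immediate from the definitions, since a weak maximizer maximizes $W$ over all surjective colorings of the given type, and in particular over those obtained by a single edge-swap $S_{u,v}(g)$ (which has the same type as $g$ because a swap preserves color-class sizes). For $(3)\Rightarrow(1)$, the key point is that by Corollary \ref{corollary_C_t_same_Wiener_index} all members of $\C_t$ have the same Wiener index, so it suffices to show that \emph{some} member of $\C_t$ is a $k$-color weak maximizer of $W$ on $P_n$ — equivalently, that $\max\{W(f):f\text{ surjective},\ \type(f)=t\}$ is attained by a coloring in $\C_t$. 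I would argue this by a greedy/exchange argument: starting from an arbitrary weak maximizer $f_0$ of type $t$, repeatedly apply edge-swaps that do not decrease $W$ to push the coloring into the prescribed block structure $\D_t$, showing that at each stage the ``capacity'' bookkeeping forces the colors with the largest remaining capacity to occupy the extreme (leftmost and rightmost) available vertices. The formula $W(S_{v,v+1}(f))-W(f) = L_v^f - R_v^f + R_{v+1}^f - L_{v+1}^f$ is the workhorse here: if vertex $v$ has color $a$, vertex $v+1$ has color $b$, and $a$ is ``more out of balance toward the right'' than $b$ is, the swap is non-decreasing.

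The substantive implication is $(2)\Rightarrow(3)$: a $k$-color local weak maximizer of type $t$ must actually lie in $\C_t$. The approach is to analyze the local stability conditions $W(S_{v,v+1}(g)) \ge W(S'_{\ }...)$, i.e. $W(S_{v,v+1}(g)) \le W(g)$ for every edge $\{v,v+1\}$, which by the displayed formula says
\[
L_v^g - R_v^g + R_{v+1}^g - L_{v+1}^g \le 0
\]
for all $v\in\{1,\ldots,n-1\}$. Writing $\beta_v^g = L_v^g - R_v^g$ (a measure of how far vertex $v$'s color is shifted to the right of center among its class), this reads $\beta_v^g \le \beta_{v+1}^g$ whenever $v,v+1$ get distinct colors, and I would first observe that if $v,v+1$ receive the \emph{same} color then automatically $\beta_{v+1}^g = \beta_v^g + 2$. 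Combining, the sequence $\beta_1^g,\beta_2^g,\ldots,\beta_n^g$ is non-decreasing, increasing by exactly $2$ across monochromatic edges and by a non-negative even amount across bichromatic edges (parity: within a fixed color class the $\beta$ values along the path are $-(n_j-1), -(n_j-1)+2,\ldots,(n_j-1)$, all of the same parity as $n_j-1$; globally $\beta_v^g$ and $\beta_{v+1}^g$ differ by an even number since $L+R$ is determined mod $2$ by class size). From monotonicity of $\beta^g$ one extracts that the leftmost $m_1$ vertices must be colored with precisely the $m_1$ colors of maximal capacity $n_k$ (those are the colors whose class-minimum $\beta$ value $-(n_j-1)$ is smallest), one of each; symmetrically on the right; then peel off these blocks and induct on the reduced instance with type $r^2$, which is exactly the recursion defining $\C_t$. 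The bifurcation in the definition of $\C_t$ according to whether $\max(r^{i^*}) > 1$ or $=1$ corresponds to whether the final capacity round distributes vertices to both ends or to the middle only, and needs to be tracked carefully.

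I expect the main obstacle to be the induction step in $(2)\Rightarrow(3)$: after identifying and removing the outer blocks $D_1^L, D_1^R$, one must check that the restriction of $g$ to the remaining path segment $[m_1+1, n-m_1]$ is still a \emph{local weak} maximizer for the reduced type $r^2$ (restricted to its nonzero, i.e. still-active, coordinates), and that the $L, R$ counts used in the reduced problem agree with the relevant parts of the original counts. This is plausible because swapping two adjacent vertices in the interior does not involve the already-removed extreme vertices, and those extreme vertices contribute a fixed additive constant to each interior $L_v^g$ and $R_v^g$ (depending only on the color of $v$, not on how interior colors are arranged) — but making this precise, and handling the edge case where a color's capacity drops to $0$ and it leaves the active set, is the delicate bookkeeping. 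A secondary subtlety is the parity argument ensuring $\beta^g$ jumps are even, which is what rules out ``half-integer'' near-balanced configurations and pins down the block sizes $m_i$ exactly.
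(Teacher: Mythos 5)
Your overall strategy coincides with the paper's: both proofs run on the identity $W(S_{v,v+1}(g))-W(g)=L_v^g-R_v^g+R_{v+1}^g-L_{v+1}^g$, and your reformulation of local weak maximality as monotonicity of $\beta_v^g=L_v^g-R_v^g$ is exactly the right invariant. The paper proves $(2)\Rightarrow(3)$ in the contrapositive, locating the least block $D_{i_0}^L$ on which $g$ fails to be a bijection onto $M(r^{i_0})$ and exhibiting a single adjacent swap that strictly increases $W$; you argue forward, peeling off $D_1^L,D_1^R$ and inducting on $r^2$. These are the same computation packaged differently, and your observation that the deleted extreme vertices shift each interior $L_v^g$ and $R_v^g$ by the same amount (so $\beta^g$ restricts correctly to the inner path) is the right justification for the induction step. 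For $(3)\Rightarrow(1)$, however, your greedy exchange argument is unnecessary: a weak maximizer of type $t$ exists by finiteness, it is a local weak maximizer by $(1)\Rightarrow(2)$, hence lies in $\C_t$ by $(2)\Rightarrow(3)$, and Corollary \ref{corollary_C_t_same_Wiener_index} then transfers maximality to every member of $\C_t$. This is what the paper does, and it avoids having to show that a sequence of non-decreasing swaps terminates in the block structure.

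There is one genuine error: the claim that $\beta_v^g$ and $\beta_{v+1}^g$ always differ by an even amount is false. Since $R_v^g=n_{g(v)}-1-L_v^g$, we have $\beta_v^g=2L_v^g+1-n_{g(v)}$, so the parity of $\beta_v^g$ is that of $n_{g(v)}+1$; across a bichromatic edge joining classes whose sizes have opposite parity the jump is odd. Concretely, for $t=(3,4)$ on $P_7$ the coloring $2,1,2,1,2,1,2$ lies in $\C_t$ and has $\beta^g=(-3,-2,-1,0,1,2,3)$, with every increment equal to $1$. You present this parity as the fact that ``pins down the block sizes $m_i$ exactly,'' so as written the argument has a gap. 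It is repairable without parity: monotonicity alone forces $\beta_1^g=1-n_{g(1)}\leq 1-n_j$ for every color $j$ (comparing with the leftmost vertex of color $j$), so $g(1)\in M(r^1)$, and iterating the same comparison shows the first $m_1$ vertices must receive the $m_1$ colors of $M(r^1)$ without repetition, and symmetrically on the right. With that correction, and the bookkeeping you already flag for the $\max(r^{i^*})=1$ endgame, your proof goes through.
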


\begin{proof}
(1) implies (2) follows easily from the definitions. Let us prove (2) implies (3). Suppose $g:V(P_n)\to \{1,\ldots,k\}$ is surjective with $\type(g)=t$ and $g\notin\C_t$. We must show that $g$ is not a local weak maximizer on $P_n$. We will use the definitions given above for various objects defined in terms of the type $t$, which are independent of $g$.

Suppose there is an $i$ with $1\leq i\leq i^*$ such that $|\max(r^i)|>1$ but either $g\restrict D_i^L$ or $g\restrict D_i^R$ is not a bijection to $M(r^i)$. Let $i_0$ be the least such $i$ and assume, without loss of generality, that $g\restrict D_{i_0}^L$ is not a bijection from $D_{i_0}^L$ to $M(r^{i_0})$. There are two possibilities. Case 1: Suppose there is some $v$ in $D_{i_0}^L$ with $g(v)\notin M(r^{i_0})$, and let $v_0$ be the least such $v$. Let $v_1$ be the least $v$ greater than $v_0$ such that $g(v)\in M(r^{i_0})$. Then
\[W(S_{v_1-1,v_1}(g))-W(g)=R_{v_1}^g-L_{v_1}^g+L_{v_1-1}^g-R_{v_1-1}^g.\]
Since there is no $v\in[v_0,v_1-1]$ with $g(v)\in M(r^{i_0})$, it follows that 
\[R_{v_1}^g-L_{v_1}^g>R_{v_1-1}^g-L_{v_1-1}^g,\]
and thus 
\[W(S_{v_1-1,v_1}(g))>W(g).\]
Case 2: Suppose $g(D_{i_0}^L)\subseteq M(r^{i_0})$, so $g\restrict D_{i_0}^L:D_{i_0}^L\to M(r^{i_0})$, but $g\restrict D_{i_0}^L$ is not bijective. Let $c\in M(r^{i_0})$ be a color such that there are at least two vertices $v_0,v_1\in D_{i_0}^L$ with $v_0<v_1$ and $g(v_0)=g(v_1)=c$. Let $v_2$ be the least vertex $v$ greater than $v_1$ such that $g(v)\in M(r^{i_0})\setminus\{c\}$. Then, it follows that 
\[R_{v_2}^g-L_{v_2}^g>R_{v_2-1}^g-L_{v_2-1}^g\]
and thus
\[W(S_{v_2-1,v_2}(g))>W(g).\]


On the other hand, suppose that whenever $1\leq i\leq i^*$ and $|\max(r^i)|>1$, it follows that $g\restrict D_i^L:D_i^L\to M(r^i)$ and $g\restrict D_i^R:D_i^R\to M(r^i)$ are both bijections, but $\max(r^{i^*})=1$ and $g\restrict D_{i^*}^L$ is not a bijection to $M(r^{i^*})$. This is not possible because we are assuming $\type(g)=t$ and $|D_{i^*}^L|=|M(r^{i^*})|=m_{i^*}$.

Now let us show that (3) implies (1). Suppose $g\in \C_t$. We must show that $g$ is a weak maximizer of $W$ on $P_n$. Let $f$ be a weak maximizer of $W$ on $P_n$ with $\type(f)=\type(g)=t$. Since we already showed that every weak maximizer of $W$ on $P_n$ is in $\C_t$ (see (1) implies (3)), it follows that $f\in \C_t$. Since $f,g\in\C_t$, it follows by Corollary \ref{corollary_C_t_same_Wiener_index} that $W(g)=W(f)$ and hence $g$ is a weak maximizer of $W$ on $P_n$.
\end{proof}

\section{Majorization on types of colorings and the Wiener index}\label{section_majorization}


\subsection{Majorization}

Let us introduce the majorization order $\maj$ on $\R^k$. For $x=(x_1,\ldots,x_k)\in\R^k$, we let $x_{(1)}\leq \cdots\leq x_{(k)}$ denote the components of $x$ in non-decreasing order and let $x^{\uparrow}=(x_{(1)},\ldots,x_{(k)}).$
Similarly, let $x_{[1]}\geq\cdots\geq x_{[k]}$
denote the components of $x$ in non-increasing order and define $x^{\downarrow}=(x_{[1]},\ldots,x_{[k]})$.

\begin{definition}\label{definition_maj} Suppose $x,y\in\R^n$. We define 
\[x\maj y \ \ \text{if}\ \ \begin{cases}\displaystyle\sum_{j=1}^sx_{[j]}\leq\sum_{j=1}^sy_{[j]}$ for $s=1,\ldots,k-1,\text{ and}\vspace{3mm}\\ \displaystyle\sum_{j=1}^kx_{[j]}=\sum_{j=1}^ky_{[j]}.\end{cases}\] 
When $x\maj y$ we say that $x$ is \emph{majorized} by $y$, or that $y$ \emph{majorizes} $x$.
\end{definition}

Intuitively speaking, $x\maj y$ indicates that $x$ is ``more equitable'' than $y$. For example, \[\left(\frac{10}{4},\frac{10}{4},\frac{10}{4},\frac{10}{4}\right)\maj(2,2,3,3)\maj(1,1,3,5)\maj(1,1,1,7).\] For background and more details on majorization, see \cite{MR2759813}.

In what follows, for notational simplicity, we will sometimes restrict our attention to the set of nondecreasing $k$-tuples of integers, which we will denote by 
\[\Z^k_\uparrow=\{x^\uparrow\st x\in\Z^k\}.\]
Given integers $1\leq j\leq j'\leq k$ and a tuple $x=(x_1,\ldots,x_k)\in\Z^k_\uparrow$ we define $R_{j,j'}(x)$ to be the tuple obtained from $x$ by subracting one from the $j$th entry of $x$, adding one to the $j'$th entry of $x$ and then by reindexing if necessary so that the resulting tuple is arranged in nondecreasing order. That is,
\[R_{j,j'}(x_1,\ldots,x_k)=(x_1,\ldots,x_{j-1},x_j-1,x_{j+1},\ldots,x_{j'-1},x_{j'}+1,x_{j'+1},\ldots,x_k)^\uparrow.\]
When $j<j'$ we refer to the function $R_{j,j'}:\Z^k_\uparrow\to\Z^k_\uparrow$ as the \emph{reverse Robin Hood transfer from $j$ to $j'$}. The following lemma is due to Murihead \cite{Murihead1903} (see \cite[Lemma 5.D.1]{MR2759813} or  \cite[Lemma 4.1]{MR262112} for a proof).

\begin{lemma}[Murihead, 1903]\label{lemma_reverse_robin_hood}
For two finite tuples of integers $x=(x_1,\ldots,x_k)$ and $y=(y_1,\ldots,y_k)$, written in nondecreasing order, we have $x\maj y$ if and only if there is a finite sequence of reverse Robin Hood transfers $R_{j_1,j_1'},\ldots,R_{j_\ell,j_\ell'}$ such that
\[R_{j_\ell,j_\ell'}\circ\cdots \circ R_{j_1,j_1'}(x) = y.\]
\end{lemma}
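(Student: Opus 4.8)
The plan is to prove the two implications separately, working throughout with the reformulation of $\maj$ for nondecreasing tuples obtained from Definition~\ref{definition_maj} by complementation: for $x,y\in\Z^k_\uparrow$ one has $x\maj y$ if and only if $\sum_{i=1}^r x_i\ge\sum_{i=1}^r y_i$ for every $r\in\{1,\ldots,k-1\}$ and $\sum_{i=1}^k x_i=\sum_{i=1}^k y_i$, using that the $j$th-largest entry of a nondecreasing tuple is its $(k+1-j)$th entry. In this form $\maj$ is visibly reflexive and transitive, which I will use freely. For the ``if'' direction it then suffices to show that a single reverse Robin Hood transfer moves a tuple \emph{up} the order, that is, $x\maj R_{j,j'}(x)$ whenever $j<j'$; composing finitely many such transfers and applying transitivity finishes that direction. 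To see this, let $x'$ be the possibly-unsorted tuple obtained from $x$ by subtracting $1$ in coordinate $j$ and adding $1$ in coordinate $j'$, so $R_{j,j'}(x)=(x')^\uparrow$, and check that for each $s$ the sum of the $s$ largest entries of $x'$ is at least that of $x$: with $J=\{k-s+1,\ldots,k\}$, the quantity $\sum_{\ell\in J}x'_\ell$ equals $\sum_{\ell\in J}x_\ell$ when $j'\notin J$ (then $j\notin J$ as well, since $j<j'$), and otherwise changes from it by $+1$ (if $j\notin J$) or by $0$ (if $j\in J$); since total sums are preserved, this is exactly $\sum_{i\le r}R_{j,j'}(x)_i\le\sum_{i\le r}x_i$ for all $r$ with equality at $r=k$, that is, $x\maj R_{j,j'}(x)$.

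For the ``only if'' direction I would induct on the nonnegative integer $d(x,y)=\sum_{i=1}^k\card{x_i-y_i}$; the base case $d(x,y)=0$ gives $x=y$ and the empty sequence. Assume $x\maj y$ and $x\ne y$; the goal of the inductive step is to produce a single reverse Robin Hood transfer $R$ with $x\maj R(x)\maj y$ and $d(R(x),y)=d(x,y)-2$, and then invoke the inductive hypothesis for $R(x)$ and $y$. To choose $R$, first let $p$ be the least index with $\sum_{i\le p}x_i>\sum_{i\le p}y_i$; this exists (with $p\le k-1$) because $x\ne y$ and $\maj$ holds, and by minimality $\sum_{i\le p-1}x_i=\sum_{i\le p-1}y_i$, so subtraction gives $x_p>y_p$. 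Moreover $x_{p-1}<x_p$ unless $p=1$: otherwise $x_{p-1}=x_p>y_p\ge y_{p-1}$ would force $\sum_{i\le p-2}x_i<\sum_{i\le p-2}y_i$, contradicting $\maj$. Next, the largest index $j$ with $x_j>y_j$ satisfies $j\ge p$, and some index exceeds $j$ with $x_\ell<y_\ell$ (else $x_\ell=y_\ell$ for all $\ell>j$, which would force $\sum_{i\le j-1}x_i<\sum_{i\le j-1}y_i$), so there is a least index $q>p$ with $x_q<y_q$. Finally let $q^*$ be the largest index with $x_{q^*}=x_q$; since $x$ is nondecreasing and $x_q<y_q\le y_\ell$ for $q\le\ell\le q^*$, every coordinate $\ell\in\{q,\ldots,q^*\}$ satisfies $x_\ell<y_\ell$, and $x_{q^*}<x_{q^*+1}$ if $q^*<k$.

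I would then take $R=R_{p,q}$. By the choice of $p$ no reordering is forced on the left (the new value $x_p-1$ is still at least $x_{p-1}$, or $p=1$), and any reordering on the right merely slides the added unit up through the block of entries equal to $x_q$ to land at position $q^*$; hence $R_{p,q}(x)$ is exactly $x$ with $x_p$ decreased by $1$ and $x_{q^*}$ increased by $1$, and it is already nondecreasing. Then $x\maj R_{p,q}(x)$ by the ``if'' direction, since $p<q^*$. Also $d(R_{p,q}(x),y)=d(x,y)-2$: from $x_p>y_p$ one gets $\card{(x_p-1)-y_p}=\card{x_p-y_p}-1$, from $x_{q^*}<y_{q^*}$ one gets $\card{(x_{q^*}+1)-y_{q^*}}=\card{x_{q^*}-y_{q^*}}-1$, and all other coordinates are unchanged.

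The remaining and, I expect, most delicate step is $R_{p,q}(x)\maj y$. Writing $x'=R_{p,q}(x)$, one has $\sum_{i\le r}x'_i=\sum_{i\le r}x_i\ge\sum_{i\le r}y_i$ for $r<p$ and for $r\ge q^*$, while for $p\le r<q^*$ one has $\sum_{i\le r}x'_i=\sum_{i\le r}x_i-1$; so what must be established is the \emph{strict} inequality $\sum_{i\le r}x_i>\sum_{i\le r}y_i$ for $p\le r<q^*$. For this, observe that $r\mapsto\sum_{i\le r}(x_i-y_i)$ is nondecreasing on $\{p,\ldots,q-1\}$ (since $x_\ell\ge y_\ell$ for $p<\ell<q$) with value at least $1$ at $r=p$, and is strictly decreasing on $\{q-1,\ldots,q^*\}$ (since $x_\ell<y_\ell$ for $q\le\ell\le q^*$); hence its minimum over $\{p,\ldots,q^*-1\}$ is attained at $r=p$ or at $r=q^*-1$, and in the latter case it is still at least $1$ because $\sum_{i\le q^*}x_i\ge\sum_{i\le q^*}y_i$ together with $x_{q^*}<y_{q^*}$ forces $\sum_{i\le q^*-1}x_i>\sum_{i\le q^*-1}y_i$. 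The key feature exploited throughout is that the right endpoint $q^*$ of the affected range is itself a coordinate where $x$ falls short of $y$: that is precisely what removes the reordering worry on the right in the definition of $R_{p,q}$ and, through $x\maj y$, supplies the needed strict inequality. With $R_{p,q}(x)\maj y$ in hand, the inductive hypothesis yields a finite sequence of reverse Robin Hood transfers from $R_{p,q}(x)$ to $y$, and prepending $R_{p,q}$ completes the step.
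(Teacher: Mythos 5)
Your proof is correct. Note that the paper itself does not prove this lemma at all: it attributes the statement to Muirhead and defers to the references (Lemma 5.D.1 of Marshall--Olkin--Arnold and Lemma 4.1 of Folkman--Fulkerson) for a proof. What you have written is essentially the standard argument from those sources, adapted cleanly to the integer setting: the easy direction via the observation that a single transfer moves a tuple up the order plus transitivity, and the hard direction by induction on $\sum_i|x_i-y_i|$, locating the first index $p$ where the prefix inequality is strict and the first later index $q$ where $x$ falls below $y$. The two points where such arguments usually go wrong --- (i) that $R_{p,q}(x)$ might require a nontrivial re-sort, and (ii) that the decremented prefix sums over $\{p,\ldots,q^*-1\}$ might drop below those of $y$ --- are exactly the points you address explicitly, and your resolutions (the added unit slides to the end $q^*$ of the constant block, where $x_{q^*}<y_{q^*}$ supplies the needed strict inequality at $r=q^*-1$) are correct. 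The only cosmetic remarks: in the sentence ``$x\maj R_{p,q}(x)$ by the `if' direction, since $p<q^*$,'' the relevant hypothesis is $p<q$ (the transfer applied is $R_{p,q}$, not $R_{p,q^*}$, even though its effect is to increment coordinate $q^*$); and the base case uses the empty composition, which is the natural reading of ``finite sequence'' but could be flagged. Neither affects correctness.
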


In what follows we will need the following folklore lemma (see \cite[Lemma 2.15]{BCL}).

\begin{lemma}\label{lemma_equitable_tuples}
Suppose $n$ and $k$ are integers with $k\geq 1$. 

\begin{enumerate}
    \item There exists a unique $k$-tuple of positive integers $t(n,k)=(m_1,\ldots,m_k)$ such that
    \begin{itemize}
        \item $\sum_{j=1}^km_j=n$,
        \item $m_1\leq\cdots\leq m_k$ and
        \item $|m_j-m_{j'}|\leq 1$ for all $j,j'\in\{1,\ldots,k\}$. 
    \end{itemize}
    Furthermore, for such a $k$-tuple we have $m_j\in\left\{\floor{\frac{n}{k}},\ceil{\frac{n}{k}}\right\}$ for all $j\in\{1,\ldots,k\}$.
    \item Suppose $n$ and $k$ are integers with $k\geq 1$. If $t=(n_1,\ldots,n_k)$ is a tuple of integers with $\sum_{j=1}^kn_j= n$, then $t(n,k)\maj t$.
\end{enumerate}
\end{lemma}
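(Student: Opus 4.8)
The plan is to prove the two parts separately: part (1) is a routine consequence of the division algorithm, and part (2) needs a short extremality argument for which the obvious averaging bound turns out to be too weak.

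For part (1), write $n=qk+r$ with $q=\floor{n/k}$ and $0\le r<k$, and take $t(n,k)$ to be the nondecreasing tuple consisting of $k-r$ copies of $q$ followed by $r$ copies of $q+1$. I would check directly that its entries sum to $qk+r=n$, that it is nondecreasing, and that any two entries differ by at most one, since all entries lie in $\{q,q+1\}$; when $r>0$ these are $\floor{n/k}$ and $\ceil{n/k}$, and when $r=0$ every entry equals $q=\floor{n/k}=\ceil{n/k}$, which establishes the ``furthermore'' clause. Positivity of the entries holds whenever $n\ge k$. For uniqueness, I would note that if $(m_1,\dots,m_k)$ is any nondecreasing integer tuple all of whose pairwise differences are at most one, then every entry lies in $\{m_1,m_1+1\}$; writing $a=m_1$ and letting $b$ be the number of entries equal to $a+1$, the sum condition reads $n=ak+b$ with $0\le b\le k$. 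Since the representation $n=qk+r$ with $0\le r<k$ is unique (the case $b=k$ being absorbed into $a\mapsto a+1$, $b\mapsto 0$), we get $a=q$ and $b=r$, so the tuple is forced.

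For part (2), since $t(n,k)$ and $t$ each have entries summing to $n$, the equality in Definition \ref{definition_maj} for $s=k$ is automatic, and it remains to show
\[\sum_{j=1}^s t(n,k)_{[j]}\le\sum_{j=1}^s t_{[j]}\qquad\text{for }s=1,\dots,k-1.\]
A one-line computation gives $\sum_{j=1}^s t(n,k)_{[j]}=sq+\min(s,r)$, because $t(n,k)^\downarrow$ consists of $r$ copies of $q+1$ followed by $k-r$ copies of $q$. So it suffices to prove that for any integer tuple $y$ with $k$ entries summing to $n$ and any $s\in\{1,\dots,k-1\}$, the sum of the $s$ largest entries of $y$ is at least $sq+\min(s,r)$; applying this to $y=t$ finishes the proof.

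I would prove this last inequality by splitting on whether $s\le r$. If $s\le r$, suppose toward a contradiction that $\sum_{j=1}^s y_{[j]}\le s(q+1)-1$; then the smallest of the top $s$ entries satisfies $y_{[s]}\le\tfrac1s\sum_{j=1}^s y_{[j]}<q+1$, hence $y_{[s]}\le q$, so $\sum_{j=s+1}^k y_{[j]}\le(k-s)q$ and $n\le(s(q+1)-1)+(k-s)q=kq+s-1$, forcing $r\le s-1$, contradicting $s\le r$. If $s>r$, the target is equivalent to $\sum_{j=s+1}^k y_{[j]}\le(k-s)q$; assuming instead $\sum_{j=s+1}^k y_{[j]}\ge(k-s)q+1$, the largest of the bottom $k-s$ entries satisfies $y_{[s+1]}\ge q+1$, so $y_{[1]},\dots,y_{[s]}$ are all at least $q+1$ and $n\ge s(q+1)+(k-s)q+1=kq+s+1$, forcing $r\ge s+1$, contradicting $s>r$. (Alternatively, part (2) can be obtained from Lemma \ref{lemma_reverse_robin_hood} by exhibiting a sequence of reverse Robin Hood transfers carrying $t(n,k)$ to $t$, but the direct computation above is self-contained.)

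The main obstacle is exactly the bound in part (2): the naive averaging estimate $\sum_{j=1}^s t_{[j]}\ge\ceil{sn/k}$ is not strong enough -- for instance it misses that $\sum_{j=1}^2 t(12,5)_{[j]}=6>\ceil{24/5}=5$ -- so the argument must use integrality more carefully by pinning down the threshold entry $y_{[s]}$ (respectively $y_{[s+1]}$) and then comparing the resulting total against $n=qk+r$.
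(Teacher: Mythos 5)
Your proof is correct. Note that the paper does not actually prove this lemma: it is stated as a folklore fact with a pointer to \cite[Lemma 2.15]{BCL}, so there is no in-text argument to compare against. Your part (1) is the standard division-algorithm construction together with the observation that a nondecreasing integer tuple with pairwise differences at most one is determined by its smallest entry and the count of entries exceeding it, which is exactly what one would expect. Your part (2) is the substantive contribution: the closed form $\sum_{j=1}^s t(n,k)_{[j]}=sq+\min(s,r)$ is right, and both cases of the extremal inequality are sound --- in each you correctly use that the threshold entry ($y_{[s]}$ or $y_{[s+1]}$) is bounded by the average of its block and then invoke integrality to drop to $q$ (respectively climb to $q+1$), after which comparing against $n=qk+r$ yields the contradiction. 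Your closing remark is also apt: the naive bound $\sum_{j=1}^s t_{[j]}\geq\lceil sn/k\rceil$ really is too weak, and the alternative route via Lemma \ref{lemma_reverse_robin_hood} (exhibiting reverse Robin Hood transfers from $t(n,k)$ to $t$) is the other standard way to prove this; either would do. One tiny point: in your uniqueness argument the case $b=k$ cannot in fact occur, since $m_1=a$ forces at least one entry to equal $a$, so the parenthetical absorption is unnecessary (though harmless).
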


\subsection{Majorization and colorings of paths}

We will determine which $k$-color weak maximizers of $W$ on $P_n$ are maximizers of $W$ and which have the smallest possible Wiener index. To do this, we establish a connection between the majorization order on types of colorings and the Wiener index of $k$-color weak maximizers of $W$ on paths.

\begin{theorem}\label{theorem_maj_paths}
Suppose $f,f':V(P_n)\to\{1,\ldots,k\}$ are $k$-color weak maximizers of $W$ on $P_n$. If $\type(f)\maj \type(f')$ and $\type(f)\neq\type(f')$ then $W(f)<W(f')$.
\end{theorem}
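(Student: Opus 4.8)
By Lemma~\ref{lemma_reverse_robin_hood}, it suffices to treat the case where $\type(f')$ is obtained from $\type(f)$ by a single reverse Robin Hood transfer: writing $t=\type(f)=(n_1,\ldots,n_k)$ and $t'=\type(f')=R_{j,j'}(t)$ for some $j<j'$, we reduce to showing $W(f)<W(f')$. The general case then follows by chaining, since along any sequence of reverse Robin Hood transfers the Wiener index strictly decreases at each step (note $t\neq t'$ guarantees at least one transfer, and each intermediate tuple is still a valid type realized by some weak maximizer via $\C_t$). So the plan is: fix such a single-transfer pair $(t,t')$, pick convenient representatives $f\in\C_t$ and $f'\in\C_{t'}$ — legitimate by Corollary~\ref{corollary_C_t_same_Wiener_index}, since $W$ is constant on each $\C_t$ — and compute $W(f')-W(f)$ directly.

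The cleanest representatives to use are the ``greedy from the outside'' colorings produced by the $\C_t$ construction. First I would record a clean closed form (or at least a monotonicity statement) for $W_{P_n}(A)$ when $A$ is one of the canonical color classes appearing in a member of $\C_t$: each such class, restricted to its color, occupies a symmetric pattern of positions determined by the $r^i$ and $m_i$ data, and its Wiener index depends only on $n$ and its cardinality in a way I can write down. Concretely, for a color used by exactly $m$ vertices in a $\C_t$-coloring, those vertices are placed as far toward the two ends of $P_n$ as the construction allows; I would show that the Wiener index of such a class is a function $\omega(m)$ of its size alone (for fixed $n$ and fixed ``outer budget''), and — this is the heart of the matter — that $\omega$ is \emph{strictly convex} in $m$: $\omega(m+1)-\omega(m) > \omega(m)-\omega(m-1)$. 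Given strict convexity, decreasing the size-$n_j$ class by one and increasing the size-$n_{j'}$ class by one with $n_j\le n_{j'}$ changes the total by $[\omega(n_{j'}+1)-\omega(n_{j'})] - [\omega(n_j)-\omega(n_j-1)] > 0$ (strict because $n_j \le n_{j'}$ forces the increment at the top to exceed the decrement at the bottom, with strictness coming from strict convexity when $n_j \le n_{j'}$ — and when $n_j = n_{j'}$ the transfer still strictly increases $W$ since we compare consecutive second differences), which is exactly $W(f')-W(f)>0$.

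There is a subtlety: the two affected colors do not sit in isolation — moving a vertex from one color to another also reshuffles which vertices other colors occupy. The way to handle this is to choose $f$ and $f'$ so that they agree on as many vertices as possible: build $f'$ from $f$ by a minimal rearrangement confined to the ``Robin Hood'' region. I expect that one can arrange $f$ and $f'$ to differ only by: the color-$j$ class loses its single innermost vertex (closest to the center) and the color-$j'$ class gains a vertex at that freed position, with everything else unchanged — because the $\C_t$ construction fills colors from the outside in, the innermost occupied positions are precisely where the capacity discrepancy shows up. Then $W(f')-W(f)$ is literally $W(V'_{j'})-W(V_{j'}) + W(V'_j) - W(V_j)$, and each difference is a single-vertex increment/decrement of a path-distance sum, which I can evaluate and compare.

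The main obstacle, and where I would spend the most care, is making the strict-convexity computation honest: I need the second-difference inequality for $\omega$ to hold \emph{uniformly} over the relevant range of sizes and positions, and I need to confirm that the ``innermost vertex'' picture of the single transfer is actually achievable inside the rigid $\C_t$ construction (it should be, because the partition $\D_t$ versus $\D_{t'}$ changes only near the center, but verifying the block bookkeeping — especially the parity split between $\max(r^{i^*})=1$ and $\max(r^{i^*})>1$ — is fiddly). An alternative that sidesteps the positional bookkeeping, and which I would keep in reserve, is to prove the second-difference inequality abstractly: show that among all weak maximizers, replacing two color classes of sizes $a\le b$ by classes of sizes $a-1, b+1$ strictly increases $W$, by a direct swapping/exchange argument on an optimally chosen pair of colorings rather than via an explicit formula for $\omega$.
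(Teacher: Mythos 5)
Your overall skeleton matches the paper's: reduce via Lemma~\ref{lemma_reverse_robin_hood} to a single reverse Robin Hood transfer, pick a convenient representative in $\C_{\type(f)}$ (legitimate by Corollary~\ref{corollary_C_t_same_Wiener_index}), and realize the transfer by recoloring a single center-most vertex of color $j$ to color $j'$, so that
\[
W(f_{**})-W(f_*)=\sum_{v\in f_*^{-1}(j')}d(\hat v,v)\;-\;\sum_{v\in f_*^{-1}(j)\setminus\{\hat v\}}d(\hat v,v).
\]
Up to this point you are on the paper's track, including the observation that intermediate tuples along the transfer sequence are realized by weak maximizers.

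The gap is in how you propose to show this difference is positive. Your central device, a function $\omega(m)$ giving ``the Wiener index of a color class of size $m$'' in a $\C_t$-coloring, is not well defined: within a single member of $\C_t$, two classes of the same size generally occupy different position sets and have different individual Wiener indices (only the \emph{total} $W$ is invariant under within-block permutations, because the changes to the two affected classes cancel in Lemma~\ref{lemma_swapping_colors}; e.g.\ in $\C_{(2,6,6)}$ the two size-$6$ classes can be $\{1,3,5,10,12,14\}$ and $\{2,4,6,9,11,13\}$, which have unequal Wiener indices). Consequently the decomposition $W(f')-W(f)=[\omega(n_{j'}+1)-\omega(n_{j'})]-[\omega(n_j)-\omega(n_j-1)]$ and the ``strict convexity'' of $\omega$ are not meaningful statements as posed, and the heart of the proof is missing. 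What actually closes the argument — and what the paper does — is not a convexity computation but a matching: after choosing $f_*$ so that color $j$ sits innermost and color $j'$ outermost, one builds an injection $\varphi:f_*^{-1}(j)\setminus\{\hat v\}\to f_*^{-1}(j')$ sending each $j$-vertex to the nearest $j'$-vertex on its outer side, with $d(\hat v,v)<d(\hat v,\varphi(v))$ termwise; the hypothesis $n_j\le n_{j'}$ is used precisely to guarantee that $j'$-vertices exist on both sides of every such $v$ and that $\varphi$ is injective. Your ``alternative kept in reserve'' (a direct exchange argument on an optimally chosen pair of colorings) is essentially this, but you would need to carry it out — in particular to verify the termwise distance inequality and the injectivity, which is where the specific choice of $f_*$ and the inequality $n_j\le n_{j'}$ do real work.
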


\begin{proof}
Suppose $f:V(P_n)\to\{1,\ldots,k\}$ is a $k$-color weak maximizer of $W$ with $\type(f)=(n_1,\ldots,n_k)$. Suppose $1\leq j<j'\leq k$, so that $n_j\leq n_{j'}$. By Lemma \ref{lemma_reverse_robin_hood}, it will suffice to show that if $g:V(P_n)\to\{1,\ldots,k\}$ is a $k$-color weak maximizer of $W$ with $\type(g)=R_{j,j'}(\type(f))$, that is
\[\type(g)=(n_1,\ldots,n_{j-1},n_j-1,n_{j+1},\ldots, n_{j'-1},n_{j'}+1,n_{j'+1},\ldots,n_k)^\uparrow,\]
then $W(f)<W(g)$.

Suppose $g$ is such a $k$-color weak maximizer of $W$ on $P_n$. We would like to show that it is possible to modify $f$ by changing the color of one vertex in $f^{-1}(j)$ to be color $j'$ in such a way that the Wiener index of the newly obtained coloring $\tilde{f}$ is strictly greater than $W(f)$. If we were able to do this, it would then follow that $\type(\tilde{f})=\type(g)$ and hence
\[W(g)\geq W(\tilde{f})>W(f)\]
as desired. As it turns out, this is not always possible. However, if we first prepare for such a color change by identifying another $k$-color weak maximizer $f_*$ of $W$ on $P_n$ with $\type(f_*)=\type(f)$ and $W(f_*)=W(f)$, then it will always be possible to modify $f_*$ by changing the color of some vertex in $f_*^{-1}(j)$ to be color $j'$ to obtain a new coloring $f_{**}$ with $\type(f_{**})=\type(g)$ and $W(g)\geq W(f_{**})>W(f_*)=W(f)$.

Therefore, it only remains to prove that there is a $k$-color weak maximizer $f_*$ on $P_n$ with $\type(f_*)=\type(f)$ and $W(f_*)=W(f)$ for which there is a vertex $v\in f_*^{-1}(j)$ such that by changing the color of $v$ to be color $j'$ we obtain a new coloring $f_{**}$ with $W(f_{**})>W(f_*)$.

Recall that the functions in $\C_{\type(f)}$ are precisely the functions obtained from $f$ by permuting colors within the blocks of the partition $\D_{\type(f)}$ (see Section \ref{section_local_weak_max_paths} for definitions). We choose $f_*\in\C_{\type(f)}$ so that vertices of color $j$ are as close as possible to the center of $P_n$ and vertices of color $j'$ are as far as possible from the center of $P_n$. Additionally, if $D_i^L$ is a block of the partition $\D_{\type(f)}$ with $\frac{n}{2}\in D_i^L$, and if $j$ is an ``available color'' for a vertex in $D_i^L$, that is if $j\in M(r^i)$, then we require that $f_*\left(\frac{n}{2}\right)=j$. We let $f_{**}$ be the coloring obtained from $f_*$ by finding a center-most vertex of color $j$, call it $\hat{v}$, and changing it to have color $j'$. Then,
\[\type(f_{**})=(n_1,\ldots,n_{j-1},n_j-1,n_{j+1},\ldots, n_{j'-1},n_{j'}+1,n_{j'+1},\ldots,n_k)\]
and
\[W(f_{**})-W(f_*)\hspace{5pt} =\hspace{10pt} \sum_{\mathclap{v\in f_*^{-1}(j')}}d(\hat{v},v)\hspace{10pt} - \hspace{20pt}\sum_{\mathclap{v\in f_*^{-1}(j)\setminus\{\hat{v}\}}}d(\hat{v},v).\]
In order to obtain $W(f_{**})-W(f_*)>0$, it suffices to show that there is an injective function 
\[\varphi:f_*^{-1}(j)\setminus\{\hat{v}\}\to f_*^{-1}(j')\]
such that for each $v\in f_*^{-1}(j)\setminus\{\hat{v}\}$ we have
\begin{align}
    d(\hat{v},v)<d(\hat{v},\varphi(v)).\label{equation_distance_inequality}
\end{align}
For $v\in f_*^{-1}(j)\setminus\{\hat{v}\}$, it follows that $v\neq\frac{n}{2}$, and furthermore, since $n_j\leq n_{j'}$ the sets $f_*^{-1}(j')\cap[1,v-1]$ and $f_*^{-1}(j')\cap [v+1,n]$ are both nonempty. Thus we may define
\[\varphi(v)=\begin{cases}
    \max(f_*^{-1}(j')\cap[1,v-1]) & \text{ if $v<\frac{n}{2}$}\\
    \min(f_*^{-1}(j')\cap [v+1,n]) & \text{ if $v>\frac{n}{2}$,}
\end{cases}\]
for $v\in f_*^{-1}(j)\setminus\{\hat{v}\}$. It follows easily from the definitions of $f_*$ and $\varphi$ that (\ref{equation_distance_inequality}) holds for all $v\in f_*^{-1}(j)\setminus\{\hat{v}\}$. Let us show that $\varphi$ is injective. Suppose $v,v'\in f_*^{-1}(j)\setminus\{\hat{v}\}$ are such that $\varphi(v)=\varphi(v')$ and without loss of generality that $v\leq v'<\frac{n}{2}$. For contradiction assume that $v\neq v'$, so $v<v'$. Then $\max(f_*^{-1}(j')\cap[1,v-1])=\max(f_*^{-1}(j')\cap[1,v'-1])$. It follows that there is no vertex $u\in V(P_n)$ such that $v\leq u<v'$ and $f_*(u)=j'$. But this is not possible by the definition of $f_*$ and since $n_j\leq n_{j'}$.
\end{proof}

\begin{corollary}\label{corollary_largest_and_smallest_paths}
Suppose $2\leq k \leq n$ are integers and $f:V(P_n)\to \{1,\ldots,k\}$ is a $k$-color weak maximizer of $W$ on $P_n$. Then:
\begin{enumerate}
    \item $W(f)=\min\{W(f')\st f'\in \WM_k(P_n)\}$ if and only if the number of vertices in any two color classes of $f$ differ by at most one and
    \item $W(f)=\max\{W(f')\st f'\in \WM_k(P_n)\}$ if and only if $f$ has a color class of the largest possible size $n-(k-1)$.
\end{enumerate}
\end{corollary}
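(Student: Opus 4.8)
The plan is to deduce Corollary \ref{corollary_largest_and_smallest_paths} directly from Theorem \ref{theorem_maj_paths} together with Lemma \ref{lemma_equitable_tuples}. The key observation is that Theorem \ref{theorem_maj_paths} tells us that for $k$-color weak maximizers on $P_n$, the Wiener index is a \emph{strictly order-reversing} function of the type with respect to the majorization order $\maj$. So the weak maximizers of minimal Wiener index are exactly those whose type is $\maj$-largest among all possible types of surjective $k$-colorings of $P_n$, and those of maximal Wiener index are exactly those whose type is $\maj$-smallest.

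For part (1): every surjective $k$-coloring of $P_n$ has a type $t=(n_1,\ldots,n_k)$ with all $n_i\geq 1$ and $\sum n_i = n$. By Lemma \ref{lemma_equitable_tuples}(1) there is a unique such tuple $t(n,k)$ whose entries differ pairwise by at most one, and by Lemma \ref{lemma_equitable_tuples}(2) we have $t(n,k)\maj t$ for every such $t$. First I would note that $t(n,k)$ is actually realized as the type of some weak maximizer: since $k\le n$, Lemma \ref{lemma_equitable_tuples}(1) guarantees all entries are positive, and the class $\C_{t(n,k)}$ is nonempty (its members are surjective by Lemma \ref{lemma_swapping_surj}), and by the main theorem of Section \ref{section_local_weak_max_paths} these are exactly the weak maximizers of that type. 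Now if $f\in\WM_k(P_n)$ has $\type(f)=t(n,k)$, then for any $f'\in\WM_k(P_n)$ we have $t(n,k)\maj\type(f')$, so either $\type(f')=t(n,k)$, giving $W(f')=W(f)$ by Corollary \ref{corollary_C_t_same_Wiener_index}, or $\type(f')\neq t(n,k)$, giving $W(f')>W(f)$ by Theorem \ref{theorem_maj_paths}; hence $W(f)$ is the minimum. Conversely, if $\type(f)\neq t(n,k)$, then $t(n,k)\maj\type(f)$ with $t(n,k)\neq\type(f)$, so $W(g)<W(f)$ for any weak maximizer $g$ of type $t(n,k)$, and $W(f)$ is not the minimum. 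This proves (1), since ``entries differing pairwise by at most one'' is exactly the characterization of $t(n,k)$ among types.

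For part (2): the $\maj$-smallest element among nondecreasing $k$-tuples of positive integers summing to $n$ is the tuple $(1,1,\ldots,1,n-(k-1))$. I would verify this by a direct partial-sum check: for any $t=(n_1,\ldots,n_k)\in\Z^k_\uparrow$ with all $n_i\ge 1$ and $\sum n_i=n$, the top $s$ entries of $(1,\ldots,1,n-k+1)$ in non-increasing order are $(n-k+1,1,\ldots,1)$, with partial sum $n-k+s$, and one checks this dominates the partial sum of the top $s$ entries of $t$ (equivalently, the bottom $k-s$ entries of $t$ sum to at least $k-s$ since each is $\ge 1$). Again this type is realized: $\C_{(1,\ldots,1,n-k+1)}$ is nonempty. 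Then the same argument as in (1), with the roles reversed, shows $W(f)$ is maximal among $\WM_k(P_n)$ if and only if $\type(f)=(1,\ldots,1,n-k+1)$, which is exactly the condition that $f$ has a color class of size $n-(k-1)$ (forcing the others to be singletons).

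I do not expect any serious obstacle here; this is essentially a bookkeeping argument combining the strict monotonicity from Theorem \ref{theorem_maj_paths} with the known extremal elements of the majorization order supplied by Lemma \ref{lemma_equitable_tuples}. The only points requiring a small amount of care are (a) confirming that the extremal types $t(n,k)$ and $(1,\ldots,1,n-k+1)$ are actually achieved by weak maximizers — which follows because $\C_t\neq\emptyset$ for every admissible type and every member of $\C_t$ is a weak maximizer — and (b) the elementary verification in part (2) that $(1,\ldots,1,n-k+1)$ is $\maj$-maximal, which is the mirror image of Lemma \ref{lemma_equitable_tuples}(2) and could even be cited as folklore.
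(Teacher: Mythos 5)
Your proposal is correct and follows essentially the same route as the paper: both parts reduce to Theorem \ref{theorem_maj_paths} together with the observation that $t(n,k)$ and $(1,\ldots,1,n-(k-1))$ are the two $\maj$-extremal realizable types, the former supplied by Lemma \ref{lemma_equitable_tuples} and the latter by the partial-sum check you give (which the paper asserts without proof). The only blemish is terminological: since $t(n,k)\maj t$ for every admissible type $t$, the equitable tuple is the $\maj$-\emph{minimum} and $(1,\ldots,1,n-k+1)$ the $\maj$-\emph{maximum}, the reverse of the labels in your opening paragraph, though the inequalities you actually invoke are all used correctly.
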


\begin{proof}
Let $f:V(P_n)\to\{1,\ldots,k\}$ be a $k$-color weak maximizer of $W$ on $P_n$. For the forward direction of (1), suppose that there exist two color classes of $f$ whose cardinalities differ by at least two. Let $t=(n_1,\ldots,n_k)=\type(f)$ and let $t(n,k)$ be the $k$-tuple defined in Lemma \ref{lemma_equitable_tuples}. It follows from Lemma \ref{lemma_equitable_tuples} that $t(n,k)\maj t$ and $t(n,k)\neq t$, and thus, by Theorem \ref{theorem_maj_paths} if $g$ is a $k$-color weak maximizer of $W$ on $P_n$ with $\type(g)=t(n,k)$, we see that $W(g)<W(f)$, and hence $W(f)\neq\min\{W(f')\st f'\in \WM_k(P_n)\}$. For the backward direction of (1), suppose the number of vertices in any two color classes of $f$ differ by at most one. Suppose $f'$ is any $k$-color weak maximizer of $W$ on $P_n$. Then by Lemma \ref{lemma_equitable_tuples} we have $\type(f)=t(n,k)\maj \type(f')$ and hence $W(f)\leq W(f')$.

For the forward direction of (2), suppose $f$ does not have a color class of the largest possible size $n-(k-1)$. Let $t$ be the $k$-tuple $t=(1,\ldots,1,n-(k-1))$ and choose some $g\in \C_t$. It follows that $\type(f)\neq t$ and $\type(f)\maj t$. Thus, by Theorem \ref{theorem_maj_paths}, $W(f)<W(g)$ and hence $W(f)\neq\max\{W(f')\st f'\in \WM_k(P_n)\}$. For the backward direction of (2), if $f$ has a color class of the largest possible size $n-(k-1)$, then it follows that $\type(f')\maj\type(f)$ for all $f'$ that are $k$-color weak maximizers of $W$ on $P_n$. Hence, by Theorem \ref{theorem_maj_paths}, $W(f)=\max\{W(f')\st f'\in \WM_k(P_n)\}$.
\end{proof}

\subsection{Majorization and colorings of cycles}

We would like to determine which $k$-color weak maximizers of $W$ on $C_n$ are maximizers of $W$ and which will have the smallest possible Wiener index. To do this, we establish a connection between the majorization order on types of colorings and the Wiener index of $k$-color weak maximizers on cycles (see Theorem \ref{theorem_maj_cycles} and Corollary \ref{corollary_W_does_not_go_down}). However, we find that the situation with cycles is more difficult than that of paths. As we did for paths in Theorem \ref{theorem_maj_paths}, we would like to show that if $f,f':V(C_n)\to\{1,\ldots,k\}$ are $k$-color weak maximizers with $\type(f)\maj\type(f')$ and $\type(f)\neq\type(f')$ then $W(f)<W(f')$. However, as the following example illustrates, this is not possible in general.

\begin{example}\label{example_bad}
Consider the $3$-color weak maximizers $f$ and $f'$ on $C_6$, shown in Figure \ref{figure_bad_example}. We have 
\[\type(f)=(2,2,2)\maj(1,2,3)=\type(f')\]
and of course $(2,2,2)\neq (1,2,3)$, but $W(f) = W(f')=9$. Thus, in general, for $k$-color weak maximizers $g,g':V(C_n)\to\{1,\ldots,k\}$, $\type(g)\maj\type(g')$ and $\type(g)\neq\type(g')$ does not always imply $W(g)<W(g')$. See Figure \ref{figure_3cwm_maj} for additional examples.

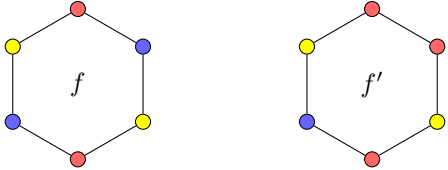
\begin{figure}[ht]
    \centering
    \begin{minipage}{0.3\textwidth}
        \centering
        \begin{tikzpicture}
        \definecolor{color4}{rgb}{1, 0.4, 0.4}    
        \definecolor{color5}{rgb}{0.4, 0.4, 1}    
        \definecolor{color6}{rgb}{1, 1, 0}    
            \foreach \i/\label in {1,4} {
                \node[draw, fill=color4, circle, minimum size=2mm, inner sep=0pt] (\i) at ({360/6 * (8-\i)+360/12}:1) {};
            }
            \foreach \i/\label in {2,5} {
                \node[draw, fill=color5, circle, minimum size=2mm, inner sep=0pt] (\i) at ({360/6 * (8-\i)+360/12}:1) {};
            }
            \foreach \i/\label in {3,6} {
                \node[draw, fill=color6, circle, minimum size=2mm, inner sep=0pt] (\i) at ({360/6 * (8-\i)+360/12}:1) {};
            }
            \node[] at (0,0) {$f$};
            \foreach \i/\j in {1/2, 2/3, 3/4, 4/5, 5/6, 6/1} {
                \draw (\i) -- (\j);
            }
        \end{tikzpicture}
    \end{minipage}
    \begin{minipage}{0.3\textwidth}
        \centering
        \begin{tikzpicture}
        \definecolor{color4}{rgb}{1, 0.4, 0.4}    
        \definecolor{color5}{rgb}{0.4, 0.4, 1}    
        \definecolor{color6}{rgb}{1, 1, 0}    
            \foreach \i/\label in {1,2,4} {
                \node[draw, fill=color4, circle, minimum size=2mm, inner sep=0pt] (\i) at ({360/6 * (8-\i)+360/12}:1) {};
            }
            \foreach \i/\label in {5} {
                \node[draw, fill=color5, circle, minimum size=2mm, inner sep=0pt] (\i) at ({360/6 * (8-\i)+360/12}:1) {};
            }
            \foreach \i/\label in {3,6} {
                \node[draw, fill=color6, circle, minimum size=2mm, inner sep=0pt] (\i) at ({360/6 * (8-\i)+360/12}:1) {};
            }
            \node[] at (0,0) {$f'$};
            \foreach \i/\j in {1/2, 2/3, 3/4, 4/5, 5/6, 6/1} {
                \draw (\i) -- (\j);
            }
        \end{tikzpicture}
        
    \end{minipage}
    \caption{Two $3$-color weak maximizers on $C_6$.}
    \label{figure_bad_example}
\end{figure}
\end{example}

Let us now show that, even though a direct analogue of Theorem \ref{theorem_maj_paths} for cycles is not true in general (by Example \ref{example_bad}), we can prove a similar result in a modified form. What's more, in Theorem \ref{theorem_maj_cycles}, Corollary \ref{corollary_Weiner_index_R_infty} and Corollary \ref{corollary_W_does_not_go_down}, we identify all instances of $k$-color weak maximizers $f,f':V(C_n)\to\{1,\ldots,k\}$ with $\type(f)\maj\type(f')$ and $\type(f)\neq\type(f')$ for which $W(f)\not<W(f')$ (see Figure \ref{figure_3cwm_maj} for an example).

\begin{theorem}\label{theorem_maj_cycles}
Suppose $f,f':V(C_n)\to\{1,\ldots,k\}$ are $k$-color weak maximizers of $W$ on $C_n$ and suppose that $j,j'\in\{1,\ldots,k\}$ are such that $\type(f')=R_{j,j'}(\type(f))$.
\begin{enumerate}
    \item\label{item_weird_case} If $n$ is even and $|f^{-1}(j)|=|f^{-1}(j')|=2a$ where $a$ is a positive integer, then $W(f)=W(f')$. 
    \item\label{item_main_case} Otherwise, $W(f)<W(f')$.
\end{enumerate}
\end{theorem}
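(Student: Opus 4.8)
The plan is to reduce the statement to a single fact about a sequence of integers. By Theorem~\ref{theorem_weak_max_on_C_n}, every color class of a $k$-color weak maximizer of $W$ on $C_n$ is itself a maximizer of $W$ on $C_n$ as a set, so its Wiener index equals
\[w_m:=\max\{W_{C_n}(A)\st A\subseteq V(C_n)\text{ and }|A|=m\},\]
where $m$ is its size. Hence $W(f)=\sum_{i=1}^k w_{n_i}$ whenever $f$ is a $k$-color weak maximizer with $\type(f)=(n_1,\ldots,n_k)$, so $W(f)$ depends only on $\type(f)$. Since the entries of $\type(f)$ are nondecreasing we have $n_j\le n_{j'}$, and the multiset of class sizes of $f'$ is obtained from that of $f$ by lowering one entry $n_j$ to $n_j-1$ and raising one entry $n_{j'}$ to $n_{j'}+1$, whence
\[W(f')-W(f)=\bigl(w_{n_j-1}+w_{n_{j'}+1}\bigr)-\bigl(w_{n_j}+w_{n_{j'}}\bigr)=\sum_{m=n_j}^{n_{j'}}\delta_m,\qquad \delta_m:=w_{m+1}-2w_m+w_{m-1},\]
the last equality being a telescoping of the first differences $w_{m+1}-w_m$.

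Thus the whole theorem follows from the claim that $\delta_m\ge 0$ for $1\le m\le n-1$, with $\delta_m=0$ if and only if both $n$ and $m$ are even. Granting this: since each $\delta_m\ge 0$, the sum above is zero exactly when $\delta_m=0$ for every $m\in[n_j,n_{j'}]$; for odd $n$ this never happens, while for even $n$ it forces every integer of $[n_j,n_{j'}]$ to be even, hence $n_j=n_{j'}$ (otherwise the interval contains two consecutive integers) and this common value even — which is case~\ref{item_weird_case}, where $W(f)=W(f')$. In all remaining cases some $\delta_m>0$, so $W(f)<W(f')$, which is case~\ref{item_main_case}. To prove the claim I would compute $w_m$ via the good-set description: by Observation~\ref{lemma_good_sets} and Lemma~\ref{lemma_good_implies_maximier}, for $2\le m\le n-2$ the value $w_m$ is attained by a union of two arcs of lengths $\ceil{m/2}$ and $\floor{m/2}$ placed so that the two complementary arcs differ in length by at most one, and $W$ of such a set is $\binom{\ceil{m/2}+1}{3}+\binom{\floor{m/2}+1}{3}$ plus the sum of cross distances between the two arcs (the cases $m\in\{0,1,n-1,n\}$ being handled directly). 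Evaluating the cross sum and simplifying, I expect the closed forms
\[w_m=\floor{\tfrac n2}\floor{\tfrac m2}\ceil{\tfrac m2}\quad(n\text{ even}),\qquad w_m=\floor{\tfrac n2}\floor{\tfrac m2}\ceil{\tfrac m2}+\binom{\ceil{m/2}}{2}\quad(n\text{ odd}),\]
from which $\delta_m$ is an immediate computation: for $n=2r$ one gets $\delta_m=0$ when $m$ is even and $\delta_m=r$ when $m$ is odd, and for $n=2r+1$ one gets $\delta_{2s}=s$ and $\delta_{2s+1}=r-s$, each of which is at least $1$ throughout the relevant range ($2\le 2s\le 2r$ gives $s\ge 1$, and $1\le 2s+1\le 2r$ gives $s\le r-1$). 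This yields the claim, and with it the theorem.

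The only substantive work is the cross-distance sum, and the genuinely delicate part is odd $n$. For even $n$ the two arcs of a good set sit antipodally, the shorter of the two arcs of $C_n$ joining a vertex of one to a vertex of the other is governed by a transparent comparison, and $w_m$ collapses to the product formula — the degeneracy $\delta_m=0$ for even $m$ then being a one-line identity. For odd $n$ the arcs are only nearly antipodal; the cross distances split according to a parity comparison between the positions of paired vertices, and there is in addition a harmless but notation-heavy choice of which of the two unequal complementary arcs to place on which side. Pushing this through correctly — and in particular checking that the resulting $\delta_m$ is strictly positive rather than merely nonnegative — is where essentially all the effort goes; everything surrounding it is formal. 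An alternative that sidesteps the explicit formula is to realize $f$ and $f'$ by colorings differing in the color of a single vertex, using Lemmas~\ref{lemma_two_pieces} and~\ref{lemma_good_maximizers} to keep every class a good set, and then to bound the change in $W$ by a distance-comparison and injection argument in the style of the proof of Theorem~\ref{theorem_maj_paths}; but this too reduces to the same analysis of the second differences of $w_m$.
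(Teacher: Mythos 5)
Your argument is correct, and it takes a genuinely different route from the paper's. The paper proves the theorem by exhibiting, in each of four parity cases, an explicit weak maximizer $f_*$ of the same type as $f$ whose $j$- and $j'$-classes are concretely placed good sets, recoloring one carefully chosen vertex $\hat{v}$, and evaluating the resulting change in $W$ as an explicit difference of arithmetic series. You instead observe that, by Theorem~\ref{theorem_weak_max_on_C_n}, $W(f)$ depends only on $\type(f)$ via $W(f)=\sum_i w_{n_i}$, and reduce the whole statement to the sign of the second differences $\delta_m=w_{m+1}-2w_m+w_{m-1}$. This is cleaner and buys more: the identity $W(f')-W(f)=\sum_{m=n_j}^{n_{j'}}\delta_m$ gives the exact increment of an arbitrary reverse Robin Hood transfer in one formula rather than four cases, and it makes Corollaries~\ref{corollary_Weiner_index_R_infty} and~\ref{corollary_W_does_not_go_down} immediate. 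The one step you leave as an expectation---the closed forms for $w_m$---does check out: summing distances over a good set yields $w_m=\floor{n/2}\floor{m/2}\ceil{m/2}$ for $n$ even and $w_m=\floor{n/2}\floor{m/2}\ceil{m/2}+\binom{\ceil{m/2}}{2}$ for $n$ odd (both formulas remain valid at $m\in\{0,1,n-1,n\}$), whence for $n$ even $\delta_m$ equals $0$ or $\floor{n/2}$ according as $m$ is even or odd, and for $n$ odd $\delta_{2s}=s$ and $\delta_{2s+1}=\floor{n/2}-s$, all strictly positive for $1\le m\le n-1$ except when $n$ and $m$ are both even---exactly matching the dichotomy of the theorem. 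The price is that the distance sum over a good set still has to be carried out honestly (it is the same kind of calculation the paper performs, packaged once into $w_m$ rather than repeated per case), so to be complete you should write out that computation rather than asserting its outcome; with it in place the proof is sound.
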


\begin{proof}
For notational simplicity, let $d=\diam(C_n)=\floor{\frac{n}{2}}$ and let $\type(f)=(n_1,\ldots,n_k)$ throughout the proof. Note that, in particular we have $n_j=|f^{-1}(j)|$ and $n_{j'}=|f^{-1}(j')|$. For (\ref{item_weird_case}), suppose $n$ is even and $n_j=n_{j'}=2a$ for some $a\in \Z$. Let $f_*:V(C_n)\to\{1,\ldots,k\}$ be a $k$-color weak maximizer of $W$ with $\type(f_*)=\type(f)$ such that 
\[f_*^{-1}(j)=\left[0,a-1\right] \cup\left[\frac{n}{2},\frac{n}{2}+a-1\right]\]
and
\[f_*^{-1}(j')=\left[a,2a-1\right] \cup\left[\frac{n}{2}+a,\frac{n}{2}+2a-1\right]\]
(see Figure \ref{figure_n_even_n_i_equal_n_j}).
Since $f_*$ and $f$ are $k$-color weak maximizers with the same type we have $W(f)=W(f_*)$. Let $\hat{v}$ be the vertex $\frac{n}{2}+a-1$. Let $f_{**}$ be the coloring obtained from $f_*$ by changing the color of $\hat{v}$ from $j$ to $j'$. Since all of the color classes of $f_{**}$ are good, it follows by Theorem \ref{theorem_weak_max_on_C_n} that $f_{**}$ is a $k$-color weak maximizer with $\type(f_{**})=\type(f')$ and hence $W(f_{**})=W(f')$. Furthermore, the quantity $W(f_{**})-W(f_*)$ equals the sum of the distances from $\hat{v}$ to the vertices of $A_{j'}$ minus the sum of the distances from $\hat{v}$ to the vertices of $A_j\setminus\{\hat{v}\}$. In Figure \ref{figure_n_even_n_i_equal_n_j}, vertices of $f^{-1}(j)$ and $f^{-1}(j')$, other than $\hat{v}$, are labeled with their distances to $\hat{v}$. Therefore, from Figure \ref{figure_n_even_n_i_equal_n_j}, we see that
\[W(f_{**})-W(f_*)=\sum_{i=1}^a i+\sum_{i=1}^a(d-i) -\left[\sum_{i=1}^{a-1}i+\sum_{i=0}^{a-1}(d-i)\right]=ad-ad=0\]
and hence
\[W(f)=W(f_*)=W(f_{**})=W(f').\]

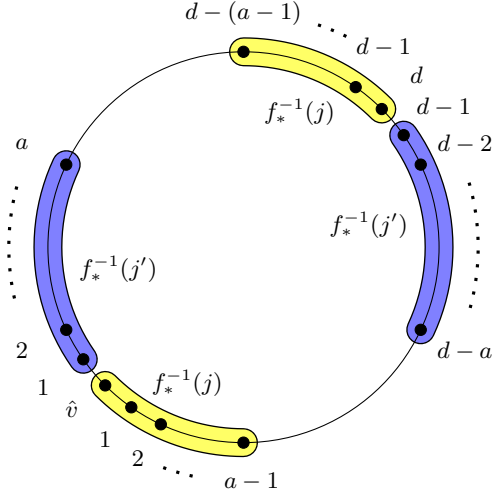
\begin{figure}
\begin{tikzpicture}[scale=1.3]
{\small
\def\radius{2}
\draw (0,0) circle (\radius);

\definecolor{r}{rgb}{1, 0.4, 0.4}    
\definecolor{b}{rgb}{0.5, 0.5, 1}    
\definecolor{yell}{rgb}{1, 1, 0.4}

  \begin{pgfonlayer}{background}
    
    \draw [black, line width=11pt, line cap=round, draw opacity=1,domain=45:90] plot ({\radius*cos(\x)}, {\radius*sin(\x)});  
  
    \draw [yell, line width=10pt, line cap=round, draw opacity=1,domain=45:90] plot ({\radius*cos(\x)}, {\radius*sin(\x)});
  
    \node[] at ({67.5}:\radius-0.5) {$f_*^{-1}(j)$};

    \draw [black,line width=11pt, line cap=round, draw opacity=1,domain=225:270] plot ({\radius*cos(\x)}, {\radius*sin(\x)});

    \draw [yell,line width=10pt, line cap=round, draw opacity=1,domain=225:270] plot ({\radius*cos(\x)}, {\radius*sin(\x)});
  
    \node[] at ({247.5}:\radius-0.5) {$f_*^{-1}(j)$};

    \node[draw, label={[label distance=5pt]200:$\hat{v}$}, circle, fill=black, inner sep=1.5pt] at ({225}:\radius) {};

    \node[draw, label={[label distance=5pt]235:$1$}, circle, fill=black, inner sep=1.5pt] at ({235}:\radius) {};

    \node[draw, label={[label distance=5pt]245:$2$}, circle, fill=black, inner sep=1.5pt] at ({245}:\radius) {};

    \draw [loosely dotted, very thick, domain=250:260] plot ({(\radius+0.4)*cos(\x)}, {(\radius+0.4)*sin(\x)});

    \node[draw, circle, fill=black, inner sep=1.5pt] at ({270}:\radius) {};

    \node[] at ({272}:\radius+0.4) {$a-1$};

    \node[draw, circle, fill=black, inner sep=1.5pt] at ({45}:\radius) {};

    \node[] at ({45}:\radius+0.5) {$d$};

    \node[draw, circle, fill=black, inner sep=1.5pt] at ({55}:\radius) {};

    \node[] at ({55}:\radius+0.5) {$d-1$};

    \draw [loosely dotted, very thick, domain=64:72] plot ({(\radius+0.4)*cos(\x)}, {(\radius+0.4)*sin(\x)});

    \node[draw, circle, fill=black, inner sep=1.5pt] at ({90}:\radius) {};

    \node[] at ({90}:\radius+0.4) {$d-(a-1)$};

    \draw [black,line width=11pt, line cap=round, draw opacity=1,domain=-25:35] plot ({\radius*cos(\x)}, {\radius*sin(\x)});

    \draw [b,line width=10pt, line cap=round, draw opacity=1,domain=-25:35] plot ({\radius*cos(\x)}, {\radius*sin(\x)});

    \node[] at ({10}:\radius-0.7) {$f_*^{-1}(j')$};

    \draw [black,line width=11pt, line cap=round, draw opacity=1,domain=-25+180:35+180] plot ({\radius*cos(\x)}, {\radius*sin(\x)});

    \draw [b,line width=10pt, line cap=round, draw opacity=1,domain=-25+180:35+180] plot ({\radius*cos(\x)}, {\radius*sin(\x)});

    \node[] at ({180+10}:\radius-0.7) {$f_*^{-1}(j')$};

    \node[draw, circle, fill=black, inner sep=1.5pt] at ({215}:\radius) {};

    \node[draw, circle, fill=black, inner sep=1.5pt] at ({205}:\radius) {};

    \node[draw, circle, fill=black, inner sep=1.5pt] at ({155}:\radius) {};

    \node[] at ({215}:\radius+0.5) {$1$};

    \node[] at ({205}:\radius+0.5) {$2$};

    \node[] at ({155}:\radius+0.5) {$a$};

    \node[draw, circle, fill=black, inner sep=1.5pt] at ({215+180}:\radius) {};

    \node[draw, circle, fill=black, inner sep=1.5pt] at ({205+180}:\radius) {};

    \node[draw, circle, fill=black, inner sep=1.5pt] at ({155+180}:\radius) {};

    \node[] at ({215+180}:\radius+0.5) {$d-1$};

    \node[] at ({205+180}:\radius+0.5) {$d-2$};

    \node[] at ({155+180}:\radius+0.5) {$d-a$};

    \draw [loosely dotted, very thick, domain=155+180+10:215+180-17] plot ({(\radius+0.4)*cos(\x)}, {(\radius+0.4)*sin(\x)});


    \draw [loosely dotted, very thick, domain=165:195] plot ({(\radius+0.4)*cos(\x)}, {(\radius+0.4)*sin(\x)});
    
  \end{pgfonlayer}
}
\end{tikzpicture}

\caption{\small Distances from $\hat{v}$ to vertices of color $j$ and $j'$ when $n$ is even and $n_j=n_{j'}=2a$, where $a\in\Z$.}\label{figure_n_even_n_i_equal_n_j}

\end{figure}

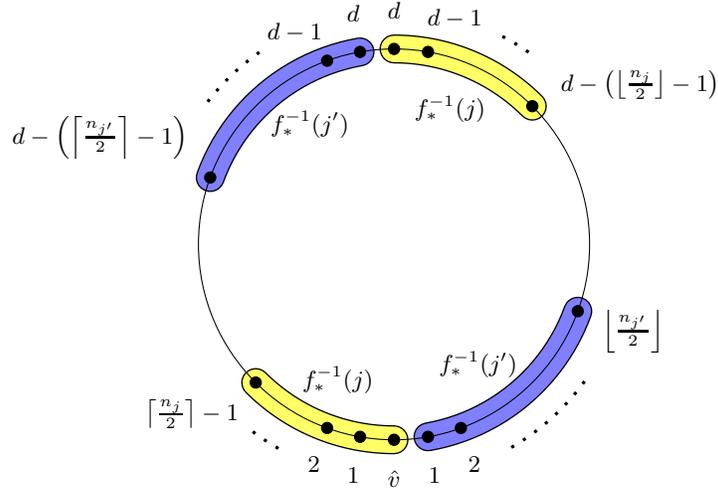
\begin{figure}
\begin{tikzpicture}[scale=1.3]
{\small
\def\radius{2}
\def\rot{125}
\draw (0,0) circle (\radius);

\definecolor{red}{rgb}{1, 0.4, 0.4}    
\definecolor{blue}{rgb}{0.5, 0.5, 1}    
\definecolor{yellow}{rgb}{1, 1, 0.4}

  \begin{pgfonlayer}{background}
    
    \draw [black, line width=11pt, line cap=round, draw opacity=1,domain=45:90] plot ({\radius*cos(\x)}, {\radius*sin(\x)});  
  
    \draw [yellow, line width=10pt, line cap=round, draw opacity=1,domain=45:90] plot ({\radius*cos(\x)}, {\radius*sin(\x)});
  
    \node[] at ({67.5}:\radius-0.5) {$f_*^{-1}(j)$};

    \draw [black,line width=11pt, line cap=round, draw opacity=1,domain=225:270] plot ({\radius*cos(\x)}, {\radius*sin(\x)});

    \draw [yellow,line width=10pt, line cap=round, draw opacity=1,domain=225:270] plot ({\radius*cos(\x)}, {\radius*sin(\x)});
  
    \node[] at ({247.5}:\radius-0.5) {$f_*^{-1}(j)$};

    \node[draw, circle, fill=black, inner sep=1.5pt] at ({225}:\radius) {};

    \node[] at ({220}:\radius+0.7) {$\ceil{\frac{n_{j}}{2}}-1$};
    
    \node[draw, label={}, circle, fill=black, inner sep=1.5pt] at ({250}:\radius) {};

    \node[] at ({250}:\radius+0.4) {$2$};
    
    \node[draw, label={}, circle, fill=black, inner sep=1.5pt] at ({260}:\radius) {};

    \node[] at ({260}:\radius+0.4) {$1$};
    
    \draw [loosely dotted, very thick, domain=233:243] plot ({(\radius+0.4)*cos(\x)}, {(\radius+0.4)*sin(\x)});

    \node[draw, circle, fill=black, inner sep=1.5pt] at ({270}:\radius) {};

    \node[] at ({270}:\radius+0.4) {$\hat{v}$};

    \node[draw, circle, fill=black, inner sep=1.5pt] at ({45}:\radius) {};

    \node[] at ({33}:\radius+1) {$d-\left(\floor{\frac{n_j}{2}}-1\right)$};

    \draw [loosely dotted, very thick, domain=55:65] plot ({(\radius+0.4)*cos(\x)}, {(\radius+0.4)*sin(\x)});
    
    \node[draw, circle, fill=black, inner sep=1.5pt] at ({80}:\radius) {};

    \node[] at ({75}:\radius+0.4) {$d-1$};

    \node[draw, circle, fill=black, inner sep=1.5pt] at ({90}:\radius) {};

    \node[] at ({90}:\radius+0.4) {$d$};

    \draw [black,line width=11pt, line cap=round, draw opacity=1,domain=-25+\rot:35+\rot] plot ({\radius*cos(\x)}, {\radius*sin(\x)});

    \draw [blue,line width=10pt, line cap=round, draw opacity=1,domain=-25+\rot:35+\rot] plot ({\radius*cos(\x)}, {\radius*sin(\x)});

    \node[] at ({\rot}:\radius-0.5) {$f_*^{-1}(j')$};

    \draw [black,line width=11pt, line cap=round, draw opacity=1,domain=-25+180+\rot:35+180+\rot] plot ({\radius*cos(\x)}, {\radius*sin(\x)});

    \draw [blue,line width=10pt, line cap=round, draw opacity=1,domain=-25+180+\rot:35+180+\rot] plot ({\radius*cos(\x)}, {\radius*sin(\x)});

    \node[] at ({180+\rot}:\radius-0.5) {$f_*^{-1}(j')$};

    \node[draw, circle, fill=black, inner sep=1.5pt] at ({215+\rot}:\radius) {};

    \node[draw, circle, fill=black, inner sep=1.5pt] at ({165+\rot}:\radius) {};

    \node[draw, circle, fill=black, inner sep=1.5pt] at ({155+\rot}:\radius) {};

    \node[] at ({215+\rot}:\radius+0.6) {$\floor{\frac{n_{j'}}{2}}$};

    \node[] at ({165+\rot}:\radius+0.4) {$2$};

    \node[] at ({155+\rot}:\radius+0.4) {$1$};

    \draw [loosely dotted, very thick, domain=175+\rot:215+\rot-15] plot ({(\radius+0.4)*cos(\x)}, {(\radius+0.4)*sin(\x)});

    \node[draw, circle, fill=black, inner sep=1.5pt] at ({215+180+\rot}:\radius) {};

    \node[draw, circle, fill=black, inner sep=1.5pt] at ({165+180+\rot}:\radius) {};

    \node[draw, circle, fill=black, inner sep=1.5pt] at ({155+180+\rot}:\radius) {};

    \node[] at ({215+180+\rot}:\radius+1.2) {$d-\left(\ceil{\frac{n_{j'}}{2}}-1\right)$};

    \node[] at ({165+180+\rot+5}:\radius+0.4) {$d-1$};

    \node[] at ({155+180+\rot}:\radius+0.4) {$d$};

    \draw [loosely dotted, very thick, domain=165+180+\rot+15:215+180+\rot-15] plot ({(\radius+0.4)*cos(\x)}, {(\radius+0.4)*sin(\x)});


  \end{pgfonlayer}
}
\end{tikzpicture}

\caption{\small Distances from $\hat{v}$ to vertices of color $j$ and $j'$ when $n$ is odd.}\label{figure_n_odd}

\end{figure}

Now let us prove (\ref{item_main_case}). Since we are not in the case outlined in (\ref{item_weird_case}), we know that either $n$ is odd or $n_j$ and $n_{j'}$ are not equal and even. We will consider cases based on the parities of $n$, $n_j$ and $n_{j'}$. In each case (except for that mentioned in (\ref{item_weird_case})), we will prove the following.
\begin{itemize}
    \item[($*$)] There is a $k$-color weak maximizer $f_*:V(C_n)\to\{1,\ldots,k\}$ with $\type(f_*)=\type(f)$ and a vertex $\hat{v}\in f_*^{-1}(j)$ such that by changing the color of $\hat{v}$ to be $j'$, we obtain a new coloring $f_{**}$ with $W(f_{**})>W(f_*)$.
\end{itemize}
This will suffice because $\type(f_{**})=\type(f')$, and since $f$, $f_*$ and $f'$ are $k$-color weak maximizers we have $W(f')\geq W(f_{**})>W(f_*)=W(f)$.

Suppose $n$ is odd. Let $f_*:V(C_n)\to\{1,\ldots,k\}$ be a $k$-color weak maximizer of $W$ with $\type(f_*)=\type(f)$ such that
\[f_*^{-1}(j)=\left[0,\floor{\frac{n_j}{2}}-1\right]\cup\left[\floor{\frac{n}{2}},\floor{\frac{n}{2}}+\ceil{\frac{n_j}{2}}-1\right]\]
and 
\[f_*^{-1}(j')=\left[\floor{\frac{n}{2}}-\floor{\frac{n_{j'}}{2}}, \floor{\frac{n}{2}}-1\right]\cup\left[n-\ceil{\frac{n_{j'}}{2}},n-1\right]\]
(see Figure \ref{figure_n_odd}).
Let $\hat{v}$ be the vertex $\floor{\frac{n}{2}}$ and let $f_{**}$ be the coloring obtained from $f_*$ by changing the color of $\hat{v}$ from $j$ to $j'$. It follows that $\type(f_{**})=\type(f')$. Furthermore, the quantity $W(f_{**})-W(f_*)$ equals the sum of the distances from $\hat{v}$ to the vertices of $f_*^{-1}(j')$ minus the sum of the distances from $\hat{v}$ to the other vertices of $f_*^{-1}(j)$. In Figure \ref{figure_n_odd}, vertices of $f^{-1}(j)$ and $f^{-1}(j')$, other than $\hat{v}$, are labeled with their distances to $\hat{v}$. Therefore, from Figure \ref{figure_n_odd}, we see that

\begin{align}
W(f_{**})-W(f_*)=\sum_{i=0}^{\ceil{\frac{n_{j'}}{2}}-1}(d-i)+\sum_{i=1}^{\floor{\frac{n_{j'}}{2}}}i-\left[\sum_{i=0}^{\floor{\frac{n_j}{2}}-1}(d-i)+\sum_{i=1}^{\ceil{\frac{n_j}{2}}-1}i\right].\label{equation_delta_wiener1}
\end{align}

Let us consider (\ref{equation_delta_wiener1}) under various parity assumptions on $n_j$ and $n_{j'}$. Suppose $n_{j'}$ is even. If $n_j$ is even we have
\[W(f_{**})-W(f_*)=\frac{n_{j'}}{2}(d+1)-\frac{n_j}{2}d>0,\]
and if $n_j$ is odd we have
\begin{align*}
W(f_{**})-W(f_*)&=\frac{n_{j'}}{2}(d+1)-\left[\frac{n_j-1}{2}d+\frac{n_j+1}{2}-1\right]\\
    &=\frac{n_{j'}}{2}(d+1)-\frac{n_j-1}{2}(d+1)\\
    &>0;
\end{align*}
in both cases, the final inequality holds because $n_j\leq n_{j'}$.

Suppose $n_{j'}$ is odd. If $n_j$ is even then
\[W(f_{**})-W(f_*)=\frac{n_{j'}+1}{2}d-\frac{n_j}{2}d>0.\]
On the other hand, if $n_j$ is odd we have
\begin{align*}
W(f_{**})-W(f_*)&=\frac{n_{j'}+1}{2}d-\frac{n_j-1}{2}(d+1)\\
    &=\left(\frac{n_{j'}-n_j}{2}+1\right)d-\frac{n_j-1}{2}\\
    &\geq\floor{\frac{n}{2}}-\floor{\frac{n_j}{2}}\\
    &>0,
\end{align*}
where the last inequality holds because $n_j$ and $n$ are odd and $n_j\neq n$. This completes the proof of (2) in the case where $n$ is odd.

\begin{figure}
\begin{tikzpicture}[scale=1.3]
{\small
\def\radius{2}
\def\rot{125}
\draw (0,0) circle (\radius);

\definecolor{red}{rgb}{1, 0.4, 0.4}    
\definecolor{blue}{rgb}{0.5, 0.5, 1}    
\definecolor{yellow}{rgb}{1, 1, 0.4}

  \begin{pgfonlayer}{background}
    
    \draw [black, line width=11pt, line cap=round, draw opacity=1,domain=45:90] plot ({\radius*cos(\x)}, {\radius*sin(\x)});  
  
    \draw [yellow, line width=10pt, line cap=round, draw opacity=1,domain=45:90] plot ({\radius*cos(\x)}, {\radius*sin(\x)});
  
    \node[] at ({67.5}:\radius-0.5) {$f_*^{-1}(j)$};

    \draw [black,line width=11pt, line cap=round, draw opacity=1,domain=225:270] plot ({\radius*cos(\x)}, {\radius*sin(\x)});

    \draw [yellow,line width=10pt, line cap=round, draw opacity=1,domain=225:270] plot ({\radius*cos(\x)}, {\radius*sin(\x)});
  
    \node[] at ({247.5}:\radius-0.5) {$f_*^{-1}(j)$};

    \node[draw, circle, fill=black, inner sep=1.5pt] at ({225}:\radius) {};

    \node[] at ({220}:\radius+0.7) {$\ceil{\frac{n_{j}}{2}}-1$};
    
    \node[draw, label={}, circle, fill=black, inner sep=1.5pt] at ({250}:\radius) {};

    \node[] at ({250}:\radius+0.4) {$2$};
    
    \node[draw, label={}, circle, fill=black, inner sep=1.5pt] at ({260}:\radius) {};

    \node[] at ({260}:\radius+0.4) {$1$};
    
    \draw [loosely dotted, very thick, domain=233:243] plot ({(\radius+0.4)*cos(\x)}, {(\radius+0.4)*sin(\x)});

    \node[draw, circle, fill=black, inner sep=1.5pt] at ({270}:\radius) {};

    \node[] at ({270}:\radius+0.4) {$\hat{v}$};

    \node[draw, circle, fill=black, inner sep=1.5pt] at ({45}:\radius) {};

    \node[] at ({33}:\radius+1) {$d-\left(\floor{\frac{n_j}{2}}-1\right)$};

    \draw [loosely dotted, very thick, domain=55:65] plot ({(\radius+0.4)*cos(\x)}, {(\radius+0.4)*sin(\x)});
    
    \node[draw, circle, fill=black, inner sep=1.5pt] at ({80}:\radius) {};

    \node[] at ({75}:\radius+0.4) {$d-1$};

    \node[draw, circle, fill=black, inner sep=1.5pt] at ({90}:\radius) {};

    \node[] at ({90}:\radius+0.4) {$d$};

    \draw [black,line width=11pt, line cap=round, draw opacity=1,domain=-25+\rot:35+\rot] plot ({\radius*cos(\x)}, {\radius*sin(\x)});

    \draw [blue,line width=10pt, line cap=round, draw opacity=1,domain=-25+\rot:35+\rot] plot ({\radius*cos(\x)}, {\radius*sin(\x)});

    \node[] at ({\rot}:\radius-0.5) {$f_*^{1}(j')$};

    \draw [black,line width=11pt, line cap=round, draw opacity=1,domain=-25+180+\rot:35+180+\rot] plot ({\radius*cos(\x)}, {\radius*sin(\x)});

    \draw [blue,line width=10pt, line cap=round, draw opacity=1,domain=-25+180+\rot:35+180+\rot] plot ({\radius*cos(\x)}, {\radius*sin(\x)});

    \node[] at ({180+\rot}:\radius-0.5) {$f_*^{-1}(j')$};

    \node[draw, circle, fill=black, inner sep=1.5pt] at ({215+\rot}:\radius) {};

    \node[draw, circle, fill=black, inner sep=1.5pt] at ({165+\rot}:\radius) {};

    \node[draw, circle, fill=black, inner sep=1.5pt] at ({155+\rot}:\radius) {};

    \node[] at ({215+\rot}:\radius+0.6) {$\floor{\frac{n_{j'}}{2}}$};

    \node[] at ({165+\rot}:\radius+0.4) {$2$};

    \node[] at ({155+\rot}:\radius+0.4) {$1$};

    \draw [loosely dotted, very thick, domain=175+\rot:215+\rot-15] plot ({(\radius+0.4)*cos(\x)}, {(\radius+0.4)*sin(\x)});

    \node[draw, circle, fill=black, inner sep=1.5pt] at ({215+180+\rot}:\radius) {};

    \node[draw, circle, fill=black, inner sep=1.5pt] at ({165+180+\rot}:\radius) {};

    \node[draw, circle, fill=black, inner sep=1.5pt] at ({155+180+\rot}:\radius) {};

    \node[] at ({215+180+\rot}:\radius+1.2) {$d-\ceil{\frac{n_{j'}}{2}}$};

    \node[] at ({165+180+\rot+10}:\radius+0.5) {$d-2$};

    \node[] at ({155+180+\rot+2}:\radius+0.4) {$d-1$};

    \draw [loosely dotted, very thick, domain=165+180+\rot+20:215+180+\rot-15] plot ({(\radius+0.4)*cos(\x)}, {(\radius+0.4)*sin(\x)});


  \end{pgfonlayer}
}
\end{tikzpicture}

\caption{\small Distances from $\hat{v}$ to vertices of color $j$ and $j'$ when $n$ is even and $n_j$ is even.}\label{figure_n_even_n_j_even}

\end{figure}

To continue our proof of (2), suppose $n$ is even. Further suppose that $n_j$ is even. Let $f_*:V(C_n)\to\{1,\ldots,k\}$ be a $k$-color weak maximizer of $W$ with $\type(f_*)=\type(f)$ such that
\[f^{-1}_*(j)=\left[0,\frac{n_j}{2}-1\right]\cup\left[\frac{n}{2},\frac{n}{2}+\frac{n_j}{2}-1\right]\]
and
\[f_*^{-1}(j')=\left[\frac{n}{2}-\floor{\frac{n_{j'}}{2}},\frac{n}{2}-1\right]\cup\left[n-\ceil{\frac{n_{j'}}{2}},n-1\right].\]
Let $\hat{v}\in f_*^{-1}(j)$ be the vertex $\frac{n}{2}$. In Figure \ref{figure_n_even_n_j_even}, vertices of $f^{-1}(j)$ and $f^{-1}(j')$, other than $\hat{v}$, are labeled with their distances to $\hat{v}$. Let $f_{**}$ be the coloring obtained from $f_*$ by changing the color of $\hat{v}$ from $j$ to $j'$. It follows that 
$\type(f_{**})=\type(f')$. As in previous cases, using Figure \ref{figure_n_even_n_j_even}, we see that
\begin{align*}
W(f_{**})-W(f_*)&=\sum_{i=1}^{\ceil{\frac{n_{j'}}{2}}}(d-i)+\sum_{i=1}^{\floor{\frac{n_{j'}}{2}}}i-\left[\sum_{i=0}^{\frac{n_j}{2}-1}(d-i)+\sum_{i=1}^{\frac{n_j}{2}-1}i\right].
\end{align*}
Suppose $n_{j'}$ is even. Since we are not in the case outlined in (1), it follows that $n_j<n_{j'}$ and hence
\[W(f_{**})-W(f_*)=\frac{n_{j'}}{2}d-\frac{n_j}{2}d>0.\]
Suppose $n_{j'}$ is odd. Then we have
\begin{align*}
    W(f_{**})-W(f_*)&=\frac{n_{j'}+1}{2}d-\frac{n_{j'}+1}{2}-\frac{n_j}{2}d.
\end{align*}
It suffices to show that 
\[(n_{j'}-n_j+1)d>n_{j'}+1,\]
but this follows easily from the fact that $n_{j'}-n_j\geq 1$. Thus $W(f_{**})-W(f_*)>0$.

{\small

\begin{figure}
\begin{tikzpicture}[scale=1.3]
{\small
\def\radius{2}
\draw (0,0) circle (\radius);

\definecolor{red}{rgb}{1, 0.4, 0.4}    
\definecolor{blue}{rgb}{0.5, 0.5, 1}    
\definecolor{yellow}{rgb}{1, 1, 0.4}

  \begin{pgfonlayer}{background}
    
    \draw [black, line width=11pt, line cap=round, draw opacity=1,domain=45:90] plot ({\radius*cos(\x)}, {\radius*sin(\x)});  
  
    \draw [yellow, line width=10pt, line cap=round, draw opacity=1,domain=45:90] plot ({\radius*cos(\x)}, {\radius*sin(\x)});
  
    \node[] at ({67.5}:\radius-0.5) {$f_*^{-1}(j)$};

    \draw [black,line width=11pt, line cap=round, draw opacity=1,domain=225:270] plot ({\radius*cos(\x)}, {\radius*sin(\x)});

    \draw [yellow,line width=10pt, line cap=round, draw opacity=1,domain=225:270] plot ({\radius*cos(\x)}, {\radius*sin(\x)});
  
    \node[] at ({247.5}:\radius-0.5) {$f_*^{-1}(j)$};

    \node[draw, label={[label distance=5pt]200:$\hat{v}$}, circle, fill=black, inner sep=1.5pt] at ({225}:\radius) {};

    \node[draw, label={[label distance=5pt]235:$1$}, circle, fill=black, inner sep=1.5pt] at ({235}:\radius) {};

    \node[draw, label={[label distance=5pt]245:$2$}, circle, fill=black, inner sep=1.5pt] at ({245}:\radius) {};

    \draw [loosely dotted, very thick, domain=250:260] plot ({(\radius+0.4)*cos(\x)}, {(\radius+0.4)*sin(\x)});

    \node[draw, circle, fill=black, inner sep=1.5pt] at ({270}:\radius) {};

    \node[] at ({272}:\radius+0.4) {$\ceil{\frac{n_j}{2}}-1$};

    \node[draw, circle, fill=black, inner sep=1.5pt] at ({45}:\radius) {};

    \node[] at ({45}:\radius+0.5) {$d-1$};

    \node[draw, circle, fill=black, inner sep=1.5pt] at ({55}:\radius) {};

    \node[] at ({55}:\radius+0.5) {$d-2$};

    \draw [loosely dotted, very thick, domain=64:75] plot ({(\radius+0.4)*cos(\x)}, {(\radius+0.4)*sin(\x)});

    \node[draw, circle, fill=black, inner sep=1.5pt] at ({90}:\radius) {};

    \node[] at ({90}:\radius+0.4) {$d-\floor{\frac{n_j}{2}}$};

    \draw [black,line width=11pt, line cap=round, draw opacity=1,domain=-25:35] plot ({\radius*cos(\x)}, {\radius*sin(\x)});

    \draw [blue,line width=10pt, line cap=round, draw opacity=1,domain=-25:35] plot ({\radius*cos(\x)}, {\radius*sin(\x)});

    \node[] at ({10}:\radius-0.7) {$f_*^{-1}(j')$};

    \draw [black,line width=11pt, line cap=round, draw opacity=1,domain=-25+180:35+180] plot ({\radius*cos(\x)}, {\radius*sin(\x)});

    \draw [blue,line width=10pt, line cap=round, draw opacity=1,domain=-25+180:35+180] plot ({\radius*cos(\x)}, {\radius*sin(\x)});

    \node[] at ({180+10}:\radius-0.7) {$f_*^{-1}(j')$};

    \node[draw, circle, fill=black, inner sep=1.5pt] at ({215}:\radius) {};

    \node[draw, circle, fill=black, inner sep=1.5pt] at ({205}:\radius) {};

    \node[draw, circle, fill=black, inner sep=1.5pt] at ({155}:\radius) {};

    \node[] at ({215}:\radius+0.5) {$1$};

    \node[] at ({205}:\radius+0.5) {$2$};

    \node[] at ({155}:\radius+0.5) {$\floor{\frac{n_{j'}}{2}}$};

    \node[draw, circle, fill=black, inner sep=1.5pt] at ({215+180}:\radius) {};

    \node[draw, circle, fill=black, inner sep=1.5pt] at ({205+180}:\radius) {};

    \node[draw, circle, fill=black, inner sep=1.5pt] at ({155+180}:\radius) {};

    \node[] at ({215+180}:\radius+0.4) {$d$};

    \node[] at ({205+180}:\radius+0.5) {$d-1$};

    \node[] at ({155+185}:\radius+1.1) {$d-\left(\ceil{\frac{n_{j'}}{2}}-1\right)$};

    \draw [loosely dotted, very thick, domain=155+180+10:215+180-20] plot ({(\radius+0.4)*cos(\x)}, {(\radius+0.4)*sin(\x)});


    \draw [loosely dotted, very thick, domain=165:195] plot ({(\radius+0.4)*cos(\x)}, {(\radius+0.4)*sin(\x)});
    
  \end{pgfonlayer}
}
\end{tikzpicture}

\caption{\small Distances from $\hat{v}$ to vertices of color $j$ and $j'$ when $n$ is even and $n_j$ is odd.}\label{figure_n_even_n_j_odd}

\end{figure}

}

To finish the proof of (2) it only remains to handle the case in which $n$ is even and $n_j$ is odd. Let $f_*:V(C_n)\to\{1,\ldots,k\}$ be a $k$-color weak maximizer of $W$ with $\type(f_*)=\type(f)$ such that 
\[f_*^{-1}(j)=\left[0,\floor{\frac{n_j}{2}}-1\right]\cup\left[\frac{n}{2},\frac{n}{2}+\ceil{\frac{n_j}{2}}-1\right]\]
and
\[f_*^{-1}(j')=\left[\floor{\frac{n_j}{2}},\floor{\frac{n_j}{2}}+\ceil{\frac{n_{j'}}{2}}-1\right]\cup\left[\frac{n}{2}+\ceil{\frac{n_j}{2}},\frac{n}{2}+\ceil{\frac{n_j}{2}}+\floor{\frac{n_{j'}}{2}}-1\right].\]
Let $\hat{v}\in f_*^{-1}(j)$ be the vertex $\hat{v}=\frac{n}{2}+\ceil{\frac{n_j}{2}}-1$. In Figure \ref{figure_n_even_n_j_odd}, vertices of $f_*^{-1}(j)$ and $f_*^{-1}(j')$ are labeled with their distances to $\hat{v}$. We let $f_{**}$ be the coloring obtained from $f_*$ by changing the color of $\hat{v}$ from $j$ to $j'$. As in previous cases, using Figure \ref{figure_n_even_n_j_odd}, we obtain
\[W(f_{**})-W(f_*)=\sum_{i=0}^{\ceil{\frac{n_{j'}}{2}}-1}(d-i)+\sum_{i=1}^{\floor{\frac{n_{j'}}{2}}}i-\left[\sum_{i=1}^{\floor{\frac{n_j}{2}}}(d-i)+\sum_{i=1}^{\ceil{\frac{n_j}{2}}-1}i\right].\]
If $n_{j'}$ is even we have
\[W(f_{**})-W(f_*)=\ceil{\frac{n_{j'}}{2}}d+\frac{n_{j'}}{2}-\floor{\frac{n_j}{2}}d>0\]
since $n_j<n_{j'}$, and if $n_{j'}$ is odd we have
\[W(f_{**})-W(f_*)=\ceil{\frac{n_{j'}}{2}}d-\floor{\frac{n_j}{2}}d>0.\]
This completes the proof of (2).\end{proof}

\begin{figure}
\centering
\begin{tikzpicture}[scale=0.4]
    \definecolor{r}{rgb}{1, 0.4, 0.4}    
    \definecolor{b}{rgb}{0.4, 0.4, 1}    
    \definecolor{y}{rgb}{1, 1, 0}
  
  \tikzset{
    ynode/.style={draw, fill=y, circle, minimum size=2mm, inner sep=0pt},
    rnode/.style={draw, fill=r, circle, minimum size=2mm, inner sep=0pt},
    bnode/.style={draw, fill=b, circle, minimum size=2mm, inner sep=0pt}
  }

  \newcommand{\cycle}[6]{
    \foreach \i in {1,...,#3} {
      \coordinate (p\i) at ({#1 + cos((360/#3)*(\i-1)+90)}, {#2 + sin((360/#3)*(\i-1)+90)});
    }

    \foreach \i [count=\j] in {1,...,#3} {
      \ifnum\j=#3
        \draw (p\j) -- (p1);
      \else
        \draw (p\j) -- (p\the\numexpr\j+1\relax);
      \fi
    }

    \foreach \x in #4 {
        \node[ynode] at (p\x) {};
    }
    \foreach \x in #5 {
        \node[rnode] at (p\x) {};
    }
    \foreach \x in #6 {
        \node[bnode] at (p\x) {};
    }
    
  }

\def\a{-4}
\def\b{3}
\def\c{-3}

\newcommand{\upeq}{\mathbin{\rotatebox[origin=c]{90}{$=$}}}

\newcommand{\upl}{\mathbin{\rotatebox[origin=c]{90}{$<$}}}

\newcommand{\upmaj}{\mathbin{\rotatebox[origin=c]{90}{$\maj$}}}

\node[] at (\a,3) {type};

\node[] at (-0.2,3) {coloring};

\node[] at (\b+0.5,4) {Wiener};
\node[] at (\b+0.5,3) {index};
\draw (\a-2,2)--(\b+1.5,2);

\node[] at (\a,0) {$(1,1,6)$};
\node[] at (\a,\c) {$(1,2,5)$};
\node[] at (\a,2*\c) {$(1,3,4)$};
\node[] at (\a,3*\c) {$(2,2,4)$};
\node[] at (\a,4*\c) {$(2,3,3)$};

\node[] at (\a,0.5*\c) {$\upmaj$};
\node[] at (\a,1.5*\c) {$\upmaj$};
\node[] at (\a,2.5*\c) {$\upmaj$};
\node[] at (\a,3.5*\c) {$\upmaj$};

\node[] at (\b,0) {$36$};
\node[] at (\b,\c) {$28$};
\node[] at (\b,2*\c) {$24$};
\node[] at (\b,3*\c) {$24$};
\node[] at (\b,4*\c) {$20$};

\node[] at (\b,0.5*\c) {$\upl$};
\node[] at (\b,1.5*\c) {$\upl$};
\node[] at (\b,2.5*\c) {$\upeq$};
\node[] at (\b,3.5*\c) {$\upl$};

\cycle{0}{0}{8}{{2,3,4,6,7,8}}{{5}}{{1}}
\cycle{0}{\c}{8}{{3,4,5,7,8}}{{2,6}}{{1}}
\cycle{0}{2*\c}{8}{{3,4,7,8}}{{2,5,6}}{{1}}
\cycle{0}{3*\c}{8}{{3,4,7,8}}{{2,6}}{{1,5}}
\cycle{0}{4*\c}{8}{{4,7,8}}{{2,3,6}}{{1,5}}

\draw[draw=black, rounded corners=5mm, dashed] (-2,-4.5) rectangle ++(4,-6);

\node[] at (0,-6) {$f'$};
\node[] at (0,-9) {$f$};

\end{tikzpicture}\hspace{5em}\begin{tikzpicture}[scale=0.4]
    \definecolor{r}{rgb}{1, 0.4, 0.4}    
    \definecolor{b}{rgb}{0.4, 0.4, 1}    
    \definecolor{y}{rgb}{1, 1, 0}
  
  \tikzset{
    ynode/.style={draw, fill=y, circle, minimum size=2mm, inner sep=0pt},
    rnode/.style={draw, fill=r, circle, minimum size=2mm, inner sep=0pt},
    bnode/.style={draw, fill=b, circle, minimum size=2mm, inner sep=0pt}
  }

  \newcommand{\cycle}[6]{
    \foreach \i in {1,...,#3} {
      \coordinate (p\i) at ({#1 + cos((360/#3)*(\i-1)+90)}, {#2 + sin((360/#3)*(\i-1)+90)});
    }

    \foreach \i [count=\j] in {1,...,#3} {
      \ifnum\j=#3
        \draw (p\j) -- (p1);
      \else
        \draw (p\j) -- (p\the\numexpr\j+1\relax);
      \fi
    }

    \foreach \x in #4 {
        \node[ynode] at (p\x) {};
    }
    \foreach \x in #5 {
        \node[rnode] at (p\x) {};
    }
    \foreach \x in #6 {
        \node[bnode] at (p\x) {};
    }
    
  }

\newcommand{\upeq}{\mathbin{\rotatebox[origin=c]{90}{$=$}}}

\newcommand{\upl}{\mathbin{\rotatebox[origin=c]{90}{$<$}}}

\newcommand{\upmaj}{\mathbin{\rotatebox[origin=c]{90}{$\maj$}}}

\def\a{-4}
\def\b{3}
\def\c{-3}

\node[] at (\a,3) {type};

\node[] at (-0.2,3) {coloring};

\node[] at (\b+0.5,4) {Wiener};
\node[] at (\b+0.5,3) {index};
\draw (\a-2,2)--(\b+1.5,2);

\node[] at (\a,0) {$(1,1,5)$};
\node[] at (\a,\c) {$(1,2,4)$};
\node[] at (\a,2*\c) {$(1,3,3)$};
\node[] at (\a,3*\c) {$(2,2,3)$};

\node[] at (\a,0.5*\c) {$\upmaj$};
\node[] at (\a,1.5*\c) {$\upmaj$};
\node[] at (\a,2.5*\c) {$\upmaj$};

\node[] at (\b,0) {$21$};
\node[] at (\b,\c) {$16$};
\node[] at (\b,2*\c) {$14$};
\node[] at (\b,3*\c) {$13$};

\node[] at (\b,0.5*\c) {$\upl$};
\node[] at (\b,1.5*\c) {$\upl$};
\node[] at (\b,2.5*\c) {$\upl$};

\cycle{0}{0}{7}{{2,3,5,6,7}}{{4}}{{1}}
\cycle{0}{\c}{7}{{3,4,6,7}}{{2,5}}{{1}}
\cycle{0}{2*\c}{7}{{3,4,7}}{{2,5,6}}{{1}}
\cycle{0}{3*\c}{7}{{3,6,7}}{{2,5}}{{1,4}}

\end{tikzpicture}
\caption{\small Less equitable $k$-color weak maximizers of $W$ on $C_n$ have larger Wiener index, unless we are in the case outlined in Theorem \ref{theorem_maj_cycles}(\ref{item_weird_case}), an example of which appears in the dashed box above. This is the case in which $n$ is even, for some $j,j'\in\{1,\ldots,k\}$ we have $|f^{-1}(j)|=|f^{-1}(j')|=2a$ where $a$ is a positive integer and $\type(f')=R_{j,j'}(\type(f))$.}
\label{figure_3cwm_maj}
\end{figure}

Let us formulate a version of Corollary \ref{corollary_largest_and_smallest_paths} for cycles. In order to do this we need to consider various cases that will be based on some new definitions.

\begin{definition} Suppose $k\leq n$ are positive integers and let $T\subseteq \Z^k_\uparrow$ be a collection of $k$-tuples of integers written in non-decreasing order. We define $\mathcal{R}(T)$ to be the collection of all $k$-tuples of integers $s$ written in non-decreasing order such that $s\in T$ or the following condition holds:
\begin{itemize}
\item $s=R_{j,j'}(t)$ where $t=(n_1,\ldots,n_k)\in T$, $1\leq j\leq j'\leq k$, and $n_j$ and $n_{j'}$ are even and equal.
\end{itemize}
Inductively, we define $\mathcal{R}^{\ell+1}(T)=\mathcal{R}(\mathcal{R}^\ell(T))$, for positive integers $\ell$. We let $\mathcal{R}^\infty(T)=\bigcup_{\ell=1}^\infty \mathcal{R}^\ell(T)$.
\end{definition}
Notice that, in general $T\subseteq\mathcal{R}(T)$ and if for every $k$-tuple $t=(n_1,\ldots,n_k)$ in $T$ there does not exist $j$ and $j'$ as above with $n_j$ and $n_j'$ equal and even, then $\mathcal{R}(T)=T$.

Notice that for $(a_1,\ldots,a_k)\in\Z^k_\uparrow$, if it is the case that for all $j,j'\in\{1,\ldots,k\}$ the entries $a_j$ and $a_{j'}$ are not equal and even, then $\mathcal{R}^\infty(\{(a_1,\ldots,a_k)\})=\{(a_1,\ldots,a_k)\}$. Whereas, on the other hand, if there exist $j,j'\in\{1,\ldots,k\}$ such that $a_j$ and $a_{j'}$ are equal and even, then $\mathcal{R}^\infty(\{(a_1,\ldots,a_k)\})$ will contain $k$-tuples other than $(a_1,\ldots, a_k)$.

We obtain the following corollaries from Theorem \ref{theorem_maj_cycles}.

\begin{corollary}\label{corollary_Weiner_index_R_infty}
If $f,g:V(C_n)\to\{1,\ldots,k\}$ are $k$-color weak maximizers of $W$ on $C_n$ and $f,g\in\mathcal{R}^\infty(\{t(n,k\})$ then $W(f)=W(g)$, where $t(n,k)$ is the $k$-tuple of positive integers defined in Lemma \ref{lemma_equitable_tuples}.
\end{corollary}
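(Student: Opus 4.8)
The plan is to show that the Wiener index is an invariant of the construction of the closure $\mathcal{R}^\infty(\{t(n,k)\})$, using Theorem~\ref{theorem_maj_cycles}(\ref{item_weird_case}) to handle each individual step of that construction.

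First I would record two elementary facts and set up notation. By the definition of a $k$-color weak maximizer, any two $k$-color weak maximizers of $W$ on $C_n$ of the same type have the same Wiener index. Moreover, by Lemma~\ref{lemma_good_maximizers}, Lemma~\ref{lemma_good_implies_maximier} and Theorem~\ref{theorem_weak_max_on_C_n}, for every $k$-tuple $t=(n_1,\ldots,n_k)$ of positive integers with $n_1+\cdots+n_k=n$ there is a $k$-color weak maximizer of $W$ on $C_n$ of type $t$: partition $V(C_n)$ into good sets of sizes $n_1,\ldots,n_k$ and apply Theorem~\ref{theorem_weak_max_on_C_n}. Hence for every such $t$ one may unambiguously define $w(t)$ to be the common value of $W$ over all $k$-color weak maximizers of $W$ on $C_n$ of type $t$. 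Since $t(n,k)$ is such a tuple, and every element of $\mathcal{R}^\infty(\{t(n,k)\})$ is again a $k$-tuple of positive integers summing to $n$ (each reverse Robin Hood transfer preserves the sum, and the coordinate it decreases is even, hence at least $2$), it suffices to prove that $w$ is constant on $\mathcal{R}^\infty(\{t(n,k)\})$.

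The key step is the single-transfer claim: if $t\in\Z^k_\uparrow$ has positive entries summing to $n$ and the $j$-th and $j'$-th entries $n_j=n_{j'}$ of $t$ are equal and even, then $w\bigl(R_{j,j'}(t)\bigr)=w(t)$. To prove it, choose a $k$-color weak maximizer $f$ of type $t$, relabelling its colors so that $|f^{-1}(j)|=n_j$ and $|f^{-1}(j')|=n_{j'}$ (relabelling colors changes neither the type nor the Wiener index), and choose any $k$-color weak maximizer $f'$ of type $R_{j,j'}(t)=R_{j,j'}(\type(f))$. The hypotheses of Theorem~\ref{theorem_maj_cycles}(\ref{item_weird_case}) now hold --- here one uses that $n$ is even, together with $|f^{-1}(j)|=|f^{-1}(j')|=2a$ for a positive integer $a$ --- so $W(f)=W(f')$, that is $w(t)=w\bigl(R_{j,j'}(t)\bigr)$. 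These are exactly the transfers permitted in the definition of $\mathcal{R}$, so unwinding the clause $\mathcal{R}^{\ell+1}(T)=\mathcal{R}(\mathcal{R}^\ell(T))$ shows that every $s\in\mathcal{R}^\infty(\{t(n,k)\})$ is joined to $t(n,k)$ by a finite chain $t(n,k)=s^0,s^1,\ldots,s^\ell=s$ in which each consecutive pair $s^i,s^{i+1}$ is either equal or related by such a transfer. Applying the single-transfer claim along the chain gives $w(s)=w(t(n,k))$, and specializing to $s=\type(f)$ and $s=\type(g)$ yields $W(f)=w(t(n,k))=W(g)$.

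Since all of the analytic content is already contained in Theorem~\ref{theorem_maj_cycles}, I do not expect a genuine obstacle; the points that need care are organizational: the relabelling of colors required to put the single-transfer claim into the exact form demanded by Theorem~\ref{theorem_maj_cycles}(\ref{item_weird_case}) (matching the sorted-tuple convention of $R_{j,j'}$ with the color-class sizes $|f^{-1}(j)|$), the unwinding of the inductive definition of $\mathcal{R}^\infty$ into a finite chain of single transfers, and a check that it is clause~(\ref{item_weird_case}) --- not clause~(\ref{item_main_case}) --- of Theorem~\ref{theorem_maj_cycles} that governs each of those transfers (this is where evenness of $n$ enters).
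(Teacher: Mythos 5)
Your proposal is correct and follows essentially the same route as the paper, which states this corollary as an immediate consequence of Theorem \ref{theorem_maj_cycles} without writing out a proof: one fixes the common value $w(t)$ of $W$ over weak maximizers of each type $t$, unwinds $\mathcal{R}^\infty(\{t(n,k)\})$ into a finite chain of reverse Robin Hood transfers between equal even entries, and applies Theorem \ref{theorem_maj_cycles}(\ref{item_weird_case}) at each step. The one point worth flagging is that, as you yourself note, the appeal to Theorem \ref{theorem_maj_cycles}(\ref{item_weird_case}) requires $n$ to be even, a hypothesis absent from the corollary as stated --- and genuinely needed, since for example with $n=9$, $k=4$ one has $t(n,k)=(2,2,2,3)$ and $(1,2,3,3)\in\mathcal{R}^\infty(\{t(n,k)\})$, where Theorem \ref{theorem_maj_cycles}(\ref{item_main_case}) gives a strict inequality --- but this omission is the paper's (the corollary is only invoked for even $n$, in Corollary \ref{corollary_cycles_smallest}), and your argument correctly isolates where evenness enters.
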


\begin{corollary}\label{corollary_W_does_not_go_down}
Suppose $f,g:V(C_n)\to\{1,\ldots,k\}$ are $k$-color weak maximizers of $W$ on $C_n$ and  $\type(g)\maj \type(f)$. Then $W(g)\leq W(f)$.
\end{corollary}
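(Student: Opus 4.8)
The plan is to interpolate between $g$ and $f$ along a chain of $k$-color weak maximizers whose types run through a sequence of reverse Robin Hood transfers, applying Theorem~\ref{theorem_maj_cycles} at each link. The first thing to record is that \emph{the Wiener index of a $k$-color weak maximizer of $W$ on $C_n$ depends only on its type}: by Theorem~\ref{theorem_weak_max_on_C_n}, if $h$ is such a maximizer then each color class $h^{-1}(i)$ is a maximizer of $W$ among vertex sets of its cardinality, so $W(h^{-1}(i))$ equals the common value $w(|h^{-1}(i)|):=\max\{W(A)\st A\subseteq V(C_n),\ |A|=|h^{-1}(i)|\}$, and hence $W(h)=\sum_i w(|h^{-1}(i)|)$ is determined by $\type(h)$. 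Moreover, by Lemma~\ref{lemma_good_maximizers} together with Lemma~\ref{lemma_good_implies_maximier} and Theorem~\ref{theorem_weak_max_on_C_n}, for every tuple $\tau\in\Z^k_\uparrow$ of \emph{positive} integers with $\sum\tau=n$ there is at least one $k$-color weak maximizer of type $\tau$; write $\Phi(\tau)$ for the common Wiener index of all of them.

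Since $\type(g)\maj\type(f)$, Muirhead's lemma (Lemma~\ref{lemma_reverse_robin_hood}) provides reverse Robin Hood transfers $R_{j_1,j_1'},\dots,R_{j_\ell,j_\ell'}$ with $R_{j_\ell,j_\ell'}\circ\cdots\circ R_{j_1,j_1'}(\type(g))=\type(f)$. The crucial point is that the sequence may be taken so that every intermediate tuple $\tau_m:=R_{j_m,j_m'}\circ\cdots\circ R_{j_1,j_1'}(\type(g))$ (with $\tau_0:=\type(g)$, $\tau_\ell=\type(f)$) has all entries positive. This is automatic in the standard greedy construction of such a sequence: at each step, with current tuple $\tau$, let $j$ be the largest index with $\tau_j>\type(f)_j$ and $j'$ the least index greater than $j$ with $\tau_{j'}<\type(f)_{j'}$ (both exist since $\sum\tau=\sum\type(f)$, $\tau\neq\type(f)$, and $\tau\maj\type(f)$), and apply $R_{j,j'}$; this maintains majorization over $\type(f)$ and strictly decreases $\sum_i|\tau_i-\type(f)_i|$, so it terminates at $\type(f)$, and the decremented coordinate $\tau_j$ is always strictly above its target $\type(f)_j\geq 1$, hence never drops below $1$, while every other coordinate only stays the same or increases. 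Thus each $\tau_m$ is a positive $k$-tuple summing to $n$.

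Now set $h_0:=g$ and, for $1\le m\le\ell$, let $h_m$ be any $k$-color weak maximizer of $W$ on $C_n$ of type $\tau_m$ (available by the first paragraph, since $\tau_m$ is positive). Then $\type(h_m)=\tau_m=R_{j_m,j_m'}(\tau_{m-1})=R_{j_m,j_m'}(\type(h_{m-1}))$, so Theorem~\ref{theorem_maj_cycles}, applied with $h_{m-1}$ in the role of $f$ and $h_m$ in the role of $f'$, gives $W(h_{m-1})\le W(h_m)$ (with equality precisely in the exceptional case~(\ref{item_weird_case}) and strict inequality otherwise, but $\le$ in every case). Chaining,
\[W(g)=W(h_0)\le W(h_1)\le\cdots\le W(h_\ell)=\Phi(\type(f))=W(f),\]
where the last two equalities use that $W(h_\ell)$ and $W(f)$ are both Wiener indices of $k$-color weak maximizers of type $\type(f)$. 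I expect the only real obstacle to be the positivity of the intermediate tuples $\tau_m$ — this is exactly what is needed for weak maximizers of those types to exist and for Theorem~\ref{theorem_maj_cycles} to be applicable at each link — and, as indicated, the greedy choice of transfers disposes of it because it never pushes a coordinate below its target value. The rest is bookkeeping together with the observation that on cycles the Wiener index of a weak maximizer is a function of its type.
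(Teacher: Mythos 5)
Your proof is correct and follows essentially the same route as the paper: decompose $\type(g)\maj\type(f)$ into a chain of reverse Robin Hood transfers via Lemma~\ref{lemma_reverse_robin_hood}, pick a $k$-color weak maximizer of each intermediate type, and apply Theorem~\ref{theorem_maj_cycles} link by link. The only difference is that you explicitly verify that the intermediate tuples can be kept entrywise positive (so that surjective colorings of those types exist); the paper leaves this point implicit, and your greedy argument for it is a harmless, correct addition.
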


\begin{proof}
Since $\type(g)\maj\type(f)$, it follows by Lemma \ref{lemma_reverse_robin_hood} that $\type(f)$ can be obtained by applying a finite sequence of reverse Robin Hood transfers to $\type(g)$, producing a finite sequence of $k$-tuples, each of which is in $\Z^k_\uparrow$: \[\type(g)=t_0\maj t_1\maj\cdots\maj t_\ell=\type(f).\] 
For each $i\in\{1,\ldots,\ell-1\}$, choose a $k$-color weak maximizer $f_i$ of $W$ on $C_n$ with $\type(f_i)=t_i$.
Then, by Theorem \ref{theorem_maj_cycles}, it follows that
\[W(g)\leq W(f_1)\leq W(f_2)\leq\cdots\leq W(f_{\ell-1})\leq W(f).\]
\end{proof}

\begin{corollary}\label{corollary_cycles_smallest}
Suppose $2\leq k\leq n$ are integers and $f:V(C_n)\to\{1,\ldots,k\}$ is a $k$-color weak maximizer of $W$ on $C_n$.
\begin{enumerate}
\item Suppose $n$ is even and let $t(n,k)\in\Z^k_\uparrow$ be the $k$-tuple defined in Lemma \ref{lemma_equitable_tuples}. Then, $W(f)=\min\{W(f')\st f'\in\WM_k(C_n)\}$ if and only if $\type(f)\in\mathcal{R}^\infty( \{t(n,k)\})$.
\item Otherwise, if $n$ is odd then $W(f)=\min\{W(f')\st f'\in\WM_k(C_n)\}$ if and only if the number vertices in any two color classes of $f$ differ by at most one.
\end{enumerate}
\end{corollary}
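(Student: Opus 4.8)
The plan is to transport Theorem~\ref{theorem_maj_cycles} along reverse Robin Hood chains. First observe that if $g,g'$ are $k$-color weak maximizers of $W$ on $C_n$ with $\type(g)=\type(g')$, then $W(g)=W(g')$ is immediate from the definition of $k$-color weak maximizer (Definition~\ref{definition_maximizer}); hence the Wiener index of a $k$-color weak maximizer is a function of its type alone, and for each integer partition $t$ of $n$ into $k$ positive parts I will write $W(t)$ for this common value (a weak maximizer of type $t$ exists by Lemma~\ref{lemma_good_maximizers}, Lemma~\ref{lemma_good_implies_maximier} and Theorem~\ref{theorem_weak_max_on_C_n}). By Lemma~\ref{lemma_equitable_tuples}(2) we have $t(n,k)\maj t$ for every such $t$, so Corollary~\ref{corollary_W_does_not_go_down} gives $W(t(n,k))\le W(t)$; thus $\min\{W(f')\st f'\in\WM_k(C_n)\}=W(t(n,k))$, and in both parts the task reduces to deciding when $W(\type(f))=W(t(n,k))$.

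For part (2), assume $n$ is odd. Then the exceptional case Theorem~\ref{theorem_maj_cycles}(\ref{item_weird_case}) never occurs, so \emph{every} reverse Robin Hood transfer strictly increases $W$. If the color classes of $f$ differ in size by at most one then $\type(f)=t(n,k)$ by the uniqueness in Lemma~\ref{lemma_equitable_tuples}(1), so $W(f)=W(t(n,k))$ is the minimum. Conversely, if $\type(f)\ne t(n,k)$ then $t(n,k)\maj\type(f)$ and $t(n,k)\ne\type(f)$, so Lemma~\ref{lemma_reverse_robin_hood} gives a nonempty chain $t(n,k)=t_0\maj t_1\maj\cdots\maj t_\ell=\type(f)$ of reverse Robin Hood transfers; choosing a weak maximizer of each type $t_i$ and applying Theorem~\ref{theorem_maj_cycles}(\ref{item_main_case}) at each of the $\ell\ge1$ steps yields $W(t(n,k))=W(t_0)<\cdots<W(t_\ell)=W(f)$, so $W(f)$ is not minimal.

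For part (1), assume $n$ is even. For the backward implication, suppose $\type(f)\in\mathcal{R}^\infty(\{t(n,k)\})$; since $t(n,k)\in\mathcal{R}^\infty(\{t(n,k)\})$ as well, Corollary~\ref{corollary_Weiner_index_R_infty} gives $W(f)=W(t(n,k))$, which is the minimum by the first paragraph. For the forward implication, suppose $W(f)=W(t(n,k))$; we may assume $\type(f)\ne t(n,k)$ and fix, as above, a nonempty chain $t(n,k)=t_0\maj\cdots\maj t_\ell=\type(f)$ of reverse Robin Hood transfers together with a weak maximizer of each type. Theorem~\ref{theorem_maj_cycles} gives $W(t_0)\le W(t_1)\le\cdots\le W(t_\ell)$, and since the two ends are equal, every inequality is an equality; by Theorem~\ref{theorem_maj_cycles}(\ref{item_main_case}), which forces a \emph{strict} increase at any non-exceptional step, every step $t_{i-1}\to t_i$ must be a reverse Robin Hood transfer between two parts that are equal and even --- exactly the transfers used in the definition of $\mathcal{R}$. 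An easy induction on $i$ then gives $t_i\in\mathcal{R}^\infty(\{t(n,k)\})$, hence $\type(f)=t_\ell\in\mathcal{R}^\infty(\{t(n,k)\})$.

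The step I expect to require the most care is the forward direction of part (1): one needs $W$ to be non-decreasing along \emph{every} chain of reverse Robin Hood transfers from $t(n,k)$ to $\type(f)$ (so that equal endpoints force each step to preserve $W$, hence to be of the exceptional type), and for Theorem~\ref{theorem_maj_cycles} to apply at every step the chain from Lemma~\ref{lemma_reverse_robin_hood} must be chosen so that all intermediate tuples are again partitions of $n$ into $k$ \emph{positive} parts --- which can be arranged since both endpoints have all parts $\ge1$ (shift by $(1,\ldots,1)$ and work with partitions of $n-k$ into at most $k$ parts). One should also, at each step, relabel the colors of the chosen weak maximizers so that the decreased and increased parts are the classes $f^{-1}(j),f^{-1}(j')$ named in Theorem~\ref{theorem_maj_cycles}; this is harmless since colors are merely labels.
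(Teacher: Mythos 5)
Your proposal is correct and follows essentially the same route as the paper: reduce everything to types, take a reverse Robin Hood chain from $t(n,k)$ to $\type(f)$, and apply Theorem \ref{theorem_maj_cycles} at each step, with the forward direction of (1) hinging on the fact that a $W$-preserving step must be of the exceptional (equal-and-even) kind used to define $\mathcal{R}$. Your worry about intermediate tuples having a zero entry is in any case moot, since each intermediate $t_i$ majorizes $\type(f)$ and therefore has minimum entry at least $\min(\type(f))\geq 1$.
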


\begin{proof}
Let us prove (1); the proof of (2) is similar, but easier. Suppose $f$ is a $k$-color weak maximizer on $C_n$ and $n$ is even. For the backward direction, suppose $\type(f)\in\mathcal{R}^\infty(\{t(n,k)\})$. Let $g$ be a $k$-color weak maximizer on $C_n$ with $\type(g)=t(n,k)$. By Lemma \ref{lemma_equitable_tuples} and Lemma \ref{corollary_W_does_not_go_down}, we see that $W(g)=\min\{W(f')\st f'\in\WM_k(C_n)\}$, and by Lemma \ref{corollary_Weiner_index_R_infty} we have $W(f)=W(g)$. For the forward direction of (1), suppose $\type(f)\notin\mathcal{R}^\infty(\{t(n,k)\})$. Let $g$ be a $k$-color weak maximizer on $C_n$ with $\type(g)=t(n,k)$. Then $\type(f)\neq\type(g)$ and $\type(g)\maj\type(f)$. By Lemma \ref{lemma_reverse_robin_hood}, $\type(f)$ can be obtained from $\type(g)$ by applying a finite sequence of reverse Robin Hood transfers, thus producing a sequence of $k$-tuples:
\[\type(g)=t_0\maj t_1\maj \cdots\maj t_\ell=\type(f).\] 
Since $\type(f)\notin\mathcal{R}^\infty(\{t(n,k\})$, it must be the case that for some $i\in\{1,\ldots,\ell\}$, $t_i$ is obtained from $t_{i-1}$ by a reverse Robin Hood transfer between two entries that are not even and equal. Fix $k$-color weak maximizers $h$ and $h'$ of $W$ on $C_n$ with $\type(h)=t_{i-1}$ and $\type(h')=t_i$. Then, by Theorem \ref{theorem_maj_cycles} and Lemma \ref{corollary_W_does_not_go_down}, we have 
\[W(g)\leq W(h)<W(h')\leq W(f).\]
Thus $W(f)\neq \min\{W(f')\st f'\in\WM_k(C_n)\}$.
\end{proof}

Now let us characterize which $k$-color weak maximizers on $C_n$ have the largest possible Wiener index.

\begin{corollary}\label{corollary_cycles_largest}
Suppose $2\leq k\leq n$ are integers and $f:V(C_n)\to\{1,\ldots,k\}$ is a $k$-color weak maximizer of $W$ on $C_n$.
\begin{enumerate}
\item Suppose $n$ is even and $n-k\leq 2$. Then $W(f)=\max\{W(f')\st f'\in\WM_k(C_n)\}$.
\item Otherwise, if $n$ is odd or if $n-k>2$, then $W(f)=\max\{W(f')\st f'\in\WM_k(C_n)\}$ if and only if $f$ has a color class of the largest possible size $n-(k-1)$.
\end{enumerate}
\end{corollary}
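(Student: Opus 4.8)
The plan is to reduce everything to the majorization machinery already in hand, namely Theorem~\ref{theorem_maj_cycles} and Corollary~\ref{corollary_W_does_not_go_down}, after setting up three preliminary facts. Write $s=(1,\ldots,1,n-k+1)\in\Z^k_\uparrow$. First, $s$ is the $\maj$-largest type: any $t=(t_1,\ldots,t_k)$ with $t_i\ge 1$ and $\sum_i t_i=n$ satisfies $t\maj s$, since for $1\le p\le k-1$ one has $\sum_{i=1}^p t_{[i]}=n-\sum_{i>p}t_{[i]}\le n-(k-p)=\sum_{i=1}^p s_{[i]}$, with the total sums agreeing. Second, every type is realized by a $k$-color weak maximizer on $C_n$ (combine Lemma~\ref{lemma_good_maximizers}, Lemma~\ref{lemma_good_implies_maximier} and Theorem~\ref{theorem_weak_max_on_C_n}), and a weak maximizer of type $s$ attains $\max\{W(f')\st f'\in\WM_k(C_n)\}$: fixing such a $g$, the first fact and Corollary~\ref{corollary_W_does_not_go_down} give $W(f')\le W(g)$ for all $f'\in\WM_k(C_n)$. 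Third, any two $k$-color weak maximizers of the same type have equal Wiener index (immediate from Definition~\ref{definition_maximizer}), and $f$ has a color class of the largest possible size $n-(k-1)$ precisely when $\type(f)=s$ (the remaining $k-1$ classes then have total size $k-1$, hence each has size $1$).

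For part (2), assume $n$ is odd or $n-k>2$. The backward direction is then immediate from the preliminary facts. For the forward direction I would assume $\type(f)\ne s$, use Lemma~\ref{lemma_reverse_robin_hood} to write a chain of reverse Robin Hood transfers $\type(f)=t_0\maj t_1\maj\cdots\maj t_\ell=s$ with $\ell\ge 1$, choose weak maximizers $h_i$ of type $t_i$ with $h_0=f$, and apply Theorem~\ref{theorem_maj_cycles} along the chain: each $W(h_{i-1})\le W(h_i)$, strictly unless $n$ is even and that transfer is taken between two entries of $t_{i-1}$ that are equal and even. If $n$ is odd, every step is strict, so $W(f)<W(h_\ell)=\max$. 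If $n$ is even (hence $n-k>2$ here), I need a single strict step; the crux is to examine the \emph{last} transfer $t_{\ell-1}\to t_\ell=s$. If it were of the ``equal and even'' type, between two entries both equal to $2a$, then the multiset of $s$ is obtained from that of $t_{\ell-1}$ by replacing two copies of $2a$ with $2a-1$ and $2a+1$, so both $2a-1$ and $2a+1$ are entries of $s=(1,\ldots,1,n-k+1)$. But $s$ has only the two distinct entries $1$ and $n-k+1$, distinct because $n-k+1\ge 3$, forcing $\{2a-1,2a+1\}=\{1,n-k+1\}$ and hence $n-k=2$, contradicting $n-k>2$. So some step is strict, giving $W(f)<\max$.

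For part (1), with $n$ even and $n-k\le 2$, I would just enumerate the possible types. If $n-k\in\{0,1\}$ there is a unique type ($(1,\ldots,1)$ or $(1,\ldots,1,2)$), so all $k$-color weak maximizers have equal Wiener index, which is therefore the maximum. If $n-k=2$ the only two types are $u=(1,\ldots,1,2,2)$ and $s=(1,\ldots,1,3)$, and $s$ is obtained from $u$ by one reverse Robin Hood transfer between the two entries equal to $2$, which are equal and even; since $n$ is even, Theorem~\ref{theorem_maj_cycles}(\ref{item_weird_case}) forces weak maximizers of types $u$ and $s$ to have the same Wiener index. Together with the preliminary fact that type-$s$ weak maximizers attain the maximum, every $k$-color weak maximizer on $C_n$ then attains the maximum.

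The step I expect to be the main obstacle is the even case of the forward direction of (2): showing that a chain of reverse Robin Hood transfers from $\type(f)$ to $s$ cannot consist entirely of ``equal and even'' transfers. The last-transfer value/parity argument sketched above is the heart of the proof; the remainder is routine bookkeeping with majorization and the already-established results.
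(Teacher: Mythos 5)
Your proposal is correct and follows essentially the same route as the paper: reduce to the majorization chain of reverse Robin Hood transfers ending at $s=(1,\ldots,1,n-k+1)$, apply Theorem~\ref{theorem_maj_cycles} and Corollary~\ref{corollary_W_does_not_go_down} along the chain, and observe that when $n$ is even and $n-k>2$ the final transfer cannot be of the ``equal and even'' type, so at least one step is strict. The only cosmetic differences are that you rule out an equal-and-even last transfer by a parity/value contradiction on the entries of $s$ where the paper explicitly identifies $t_{\ell-1}=(1,\ldots,1,2,n-k)$, and that in the $n-k=2$ subcase of (1) you invoke Theorem~\ref{theorem_maj_cycles}(\ref{item_weird_case}) rather than computing the Wiener indices directly; both are sound.
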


\begin{proof}
Suppose $2\leq k\leq n$ and fix any $k$-color weak maximizer $f$ of $W$ on $C_n$. Let us prove (1). Assume $n$ is even and $n-k\leq 2$. Suppose $n-k=0$. Then every $k$-color weak maximizer has type equal to $(1,\ldots,1)$, and thus has Wiener index equal to $0$. So, of course, $W(f)=\max\{W(f')\st f'\in\WM_k(C_n)\}$. 

Suppose $n-k=1$. Then every $k$-color weak maximizer of $W$ on $C_n$ has type equal to the $k$-tuple $(1,\ldots,1,2)$, and therefore has Wiener index equal to $n/2$. Hence $W(f)=\max\{W(f')\st f'\in\WM_k(C_n)\}$.

Suppose $n-k=2$. Then every $k$-color weak maximizer of $W$ on $C_n$ has type equal to a $k$-tuple of the form $(1,\ldots,1,2,2)$ or $(1,\ldots,1,3)$, and thus has Wiener index equal to $n$ (since $n$ is even). Hence $W(f)=\max\{W(f')\st f'\in\WM_k(C_n)\}$.

Now let us prove (2). Suppose $n$ is odd or $n-k>2$. For the reverse direction, suppose the $k$-color weak maximizer $f$ of $W$ on $C_n$ has a color class of the largest possible size $n-(k-1)$. So, $\type(f)$ is the $k$-tuple $(1,\ldots,1,n-(k-1))$. Let $g$ be any $k$-color weak maximizer of $W$ on $C_n$. Then $\type(g)\maj\type(f)$, which implies that $\type(f)$ can be obtained from $\type(g)$ by applying a finite sequence of reverse Robin Hood transfers, thus producing a sequence of $k$-tuples 
\[\type(g)=t_0\maj t_1\maj\cdots\maj t_\ell=\type(f).\]
For each $i\in\{1,\ldots,\ell-1\}$ choose a $k$-color weak maximizer $f_i$ with $\type(f_i)=t_i$. 
If $n$ is odd then by Theorem \ref{theorem_maj_cycles} we have
\[W(g)<W(f_1)<\cdots W(f_{\ell-1})<W(f).\]
If $n$ is not odd, then by assumption we may suppose that $n-k>2$. We see that $t_\ell=(1,\ldots,1,n-(k-1))$ and $t_{\ell-1}=(1,\ldots,1,2,n-k)$, and since $n-k>2$, the $k$-tuple $t_{\ell-1}$ does not have two entries that are equal and even. Hence, by Theorem \ref{theorem_maj_cycles} we have
\[W(g)\leq W(f_{\ell-1})<W(f).\]
Therefore, $W(f)=\max\{W(f')\st f'\in\WM_k(C_n)\}$

For the forward direction of (2), suppose $f$ does not have a color class of the largest possible size $n-(k-1)$. Let $g$ be a $k$-color weak maximizer of $W$ on $C_n$ that does have a color class of the largest possible size, that is, $\type(g)$ is the $k$-tuple $(1,\ldots,1,n-(k-1))$. Then, it follows that $\type(f)\maj \type(g)$ and $\type(f)\neq\type(g)$. This means that $\type(g)$ can be obtained from $\type(f)$ by applying a finite sequence of reverse Robin Hood transfers, thus producing a sequence of $k$-tuples:
\[\type(f)=t_0\maj t_1\maj\cdots\maj t_\ell=\type(g).\]
For each $i\in\{1,\ldots,\ell-1\}$ let $f_i$ be a $k$-color weak maximizer of $W$ on $C_n$ with $\type(f_i)=t_i$. Then, by an argument similar to that of the reverse direction of (2), given that $n$ is odd or $n-k>2$, it follows that 
$W(f)<W(f_{\ell-1})<W(g)$,
and thus $W(f)\neq\max\{W(f')\st f'\in\WM_k(C_n)\}$.
\end{proof}

\section{Questions}\label{section_questions}

We close the paper with several open questions. First, we state two rather broad classes of problems.

\begin{problem}\label{problem_find_maximizers}
Given a graph $G$, what are the $k$-color weak maximizers of $W$ on $G$? What are the $k$-color local weak maximizers of $W$ on $G$? What are the $k$-color maximizers of $W$ on $G$?
\end{problem}
For example, we think addressing Problem \ref{problem_find_maximizers} for various natural graphs such as toroidal graphs, grid graphs, powers of cycles, circulant graphs, corona graphs, Cayley graphs, etc. could lead to some approachable and nontrivial problems.

Recall that by Theorem \ref{theorem_weak_max_on_C_n}, a surjective function $f:V(C_n)\to\{1,\ldots,k\}$ is a $k$-color weak maximizer of $W$ on $C_n$ if and only if for all $i\in\{1,\ldots,k\}$ the set of vertices $f^{-1}(i)$ is a maximzer of $W$ as a set. On the other hand, there are $k$-color weak maximizers $f:V(P_n)\to\{1,\ldots,k\}$ of $W$ on $P_n$ whose color classes are not all maximizers of $W$ on $P_n$ as sets.

\begin{problem}
For what class of graphs $G$ will a surjective function $f:V(G)\to\{1,\ldots,k\}$ be a $k$-color weak maximizer of $W$ on $G$ if and only if all of the color classes $f^{-1}(i)$, for $i\in\{1,\ldots,k\}$, are maximizers of $W$ on $G$ as sets?
\end{problem}

Let us consider other \emph{interactions} between vertices of the same color. For example, given a function $g:\{1,\ldots,\diam(G)\}\to\mathbb{R}$, the second and third author defined the \emph{$g$-energy} \cite{BCL} of a set $A$ of vertices of a graph $G$ as the quantity 
\[E_g(A)=\sum_{\{u,v\}\in\binom{A}{2}}g(d_G(u,v)).\]
Given a surjective $k$-coloring $f:V(G)\to\{1,\ldots,k\}$, we can then define the $g$-energy of $f$ to be the sum of the $g$-energies of the color classes of $f$; that is,
\[E_g(f)=\sum_{i=1}^k E_g(f^{-1}(i)).\]
We say that $f$ is a \emph{$k$-color weak minimizer of $E_g$ on $G$} if $E_g(f)\leq E_g(f')$ for all surjective $f':V(G)\to\{1,\ldots,k\}$ with $\type(f')=\type(f)$. Similarly, one can define the notion of \emph{$k$-color weak maximizer of $E_g$ on $G$}.
\begin{problem}
Given a graph $G$, and a function $g:V(G)\to\{1,\ldots,k\}$, what are the $k$-color weak minimizers (maximizers) of $E_g$ on $G$?
\end{problem}
When the function $g$ is defined by $g(x)=\frac{1}{x}$, the quantity $E_g(A)$ can be thought of as the \emph{electric potential energy} of a system of unit point charges in the finite metric space $(V(G),d_G)$, located at the vertices in $A$ (see \cite{BCL}). So, sets of vertices $A$ that minimize $E_g$ will be ``spread out as much as possible'' within $G$. The sets of vertices $A$ of $C_n$ that minimize $E_g$ are precisely the \emph{maximally even sets} introduced in Clough and Douthett's work on music theory \cite{BCL, CloughDouthett, MR2408358}.
\begin{question}\label{question_electric}
When $g(x)=\frac{1}{x}$, which surjective functions $f:V(C_n)\to\{1,\ldots,k\}$ are $k$-color weak maximizers of $E_g$ on $C_n$?
\end{question}
Since a set of vertices $A$ of a distance degree regular graph is a minimizer of $E_g$ if and only its complement is also a minimizer of $E_g$, the answer to Question \ref{question_electric} when $k=2$ is already well understood \cite{BCL}. But, very little is known regarding Question \ref{question_electric} for $k\geq 3$ (see \cite{MR4550703} for a discussion of some relevant difficulties). 

Let us end with a few more specific questions regarding the Wiener index of vertex colorings suggested by the current work that we were not able to answer.

\begin{question}
What are the $k$-color \emph{local} weak maximizers of $W$ on cycles?
\end{question}


\begin{question}
For what graphs $G$ is it the case that whenever $f,f':V(G)\to\{1,\ldots,k\}$ are $k$-color weak maximizers such that $\type(f)\maj\type(f')$ and $\type(f)\neq\type(f')$, then $W(f)<W(f')$?
\end{question}

\begin{question} For what graphs $G$ is it the case that for $f:V(G)\to\{1,\ldots,k\}$ a $k$-color weak maximizer, $W(f)=\min\{W(f')\st f'\in\WM_k(F)\}$ if and only if the number of vertices in any two color classes of $f$ differ by at most one? For what $G$ will $W(f)=\max\{W(f')\st f'\in\WM_k(G)\}$ if and only if $f$ has a color class of the largest possible size $|V(G)|-(k-1)$? \end{question}

\end{document}